\theoremstyle{plain}
\newtheorem{theorem}{Theorem}[section]
\newtheorem{lemma}[theorem]{Lemma}
\newtheorem{proposition}[theorem]{Proposition}
\newtheorem{corollary}[theorem]{Corollary}
\newtheorem{remark}[theorem]{Remark}
\newtheorem{definition}[theorem]{Definition}
\theoremstyle{definition}
\numberwithin{equation}{section}
\def\R{\mathbb{R}}
\def\SSS{\mathbb{S}}
\def\Z{\mathbb{Z}}
\def\N{\mathbb{N}}
\def\dist{\textup{dist}}
\def\H{\mathcal{H}}
\def\T{\mathbb{T}}
\def\h{\mathrm{H}}
\renewcommand{\div}{\mathrm{div}}
\newcommand{\eps}{\varepsilon}
\renewcommand{\H}{\mathcal H}
\newcommand{\de}{\partial}
\newcommand{\pa}{\partial}
\newcommand{\e}{F}
\newcommand{\cW}{\mathcal{W}}
\newcommand{\medint}{-\kern -,375cm\int}
\newcommand{\medintinrigo}{-\kern -,315cm\int}
\def\ringg#1{\accentset{\circ}{#1}}
\def\beq{\begin{equation}}
\def\eeq{\end{equation}}
\title[Quantitative Alexandrov and applications]{A sharp quantitative Alexandrov inequality and applications to volume preserving geometric~flows~in~3D}
\author{Vesa Julin}\address{Matematiikan ja Tilastotieteen Laitos, Jyv\"askyl\"an Yliopisto, Finland}\email{vesa.julin@jyu.fi}
\author{Massimiliano Morini}\address{Dipartimento di Scienze Matematiche Fisiche e Informatiche, Universit\`a di Parma, Italy}\email{massimiliano.morini@unipr.it}
\author{Francesca Oronzio}\address{Institutionen f\"{o}r Matematik, Kungliga Tekniska H\"{o}gskolan, Sweden} \email{oronzio@kth.se}
\author{Emanuele Spadaro}\address{ Dip. di Matematica, Univ. Roma-I ''La Sapienza'' Roma, Italy}\email{spadaro@mat.uniroma1.it}
\begin{document}

\begin{abstract} 
We study the asymptotic behavior of the volume preserving mean curvature and the Mullins-Sekerka flat ﬂow in three dimensional space. Motivated by this  we establish a 3D sharp quantitative version of the Alexandrov inequality for $C^2$-regular sets with a perimeter bound.
\end{abstract}

\maketitle

\section{Introduction}

In this paper we continue our study on the long-time behavior of globally well-defined  weak solutions of two physically relevant volume preserving geometric ﬂows in three dimensions: the volume preserving mean curvature and the Mullins-Sekerka flat ﬂow. Our starting point is the work \cite{JN}, which implies the qualitative convergence of the volume preserving mean curvature flat flow in $\R^3$ to a union of disjoint balls, up to translation of the components. In our previous work \cite{JuMoPoSpa} on the two-dimensional case, we observe that the full quantitative convergence is related to a sharp quantitative version of the Alexandrov theorem, which is  a purely geometric inequality. In \cite{JuMoPoSpa} we prove a sharp quantitative Alexandrov theorem and show that it not only implies the full convergence of both the area preserving mean curvature and the Mullins-Sekerka flat ﬂow, but also gives the exponential rate of convergence, in the planar case. Our main results in this paper are the three dimensional counterpart of both this inequality and its consequences for the asymptotics of the aforementioned  geometric flat flows.

\subsection{Quantitative Alexandrov inequality}

We denote by $\mathrm{H}_E$ the mean curvature of the hypersurface $\partial E$ (with the sign convention that $\mathrm{H}_E\geq 0$ if $E$ is convex, see the next section for details), $P(E)$ for the perimeter and set 
\[
\mathrm{\bar H}_E := \frac{1}{P(E)}\int_{\partial E} \mathrm{H}_E\, d\mathcal{H}^2.
\]
Here is  our first main results.
\begin{theorem}\label{t:LS}
For every $\delta_0>0$ there exists $C=C(\delta_0)>0$ such that for every $C^2$-regular set $E\subset \R^3$ with 
\begin{equation}\label{e.condition}
|E|=|B_1| \quad \text{and}\quad P(E) \leq 4 \pi \, \sqrt[3]{2} -\delta_0 
\end{equation}
 it holds 
\begin{equation}\label{the-inequality}
P(E)-P(B_1)\leq C \, \big\|\mathrm{H}_E-\mathrm{\bar H}_E\big\|_{L^2(\partial E)}^2.
\end{equation}
Moreover, there exists $\delta_1>0$ such that, if in addition $\pa E\in C^\infty$ and $\|\mathrm{H}_E-\mathrm{\bar H}_E\|_{L^2(\partial E)}\leq \delta_1$, then $\partial E$ is diffeomorphic to the standard sphere $\mathbb{S}^2$.
\end{theorem}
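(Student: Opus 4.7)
\emph{Strategy.} The plan is to prove \eqref{the-inequality} by contradiction, reducing to a Fuglede-type expansion for nearly spherical sets. Suppose the inequality failed for some $\delta_0>0$. Then there would exist a sequence of $C^2$-regular sets $E_n$ satisfying \eqref{e.condition} with
\[
P(E_n)-P(B_1)>n\,\|\mathrm{H}_{E_n}-\mathrm{\bar H}_{E_n}\|_{L^2(\partial E_n)}^2.
\]
Since $P(E_n)$ is bounded, this forces $\|\mathrm{H}_{E_n}-\mathrm{\bar H}_{E_n}\|_{L^2}\to 0$. After translating each $E_n$ so that its barycenter lies at the origin, the uniform bounds on volume and perimeter yield, up to subsequence, $L^1$-convergence to a set $E_\infty$ of finite perimeter with $|E_\infty|=|B_1|$ and $P(E_\infty)\leq 4\pi\sqrt[3]{2}-\delta_0$. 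The constants $\mathrm{\bar H}_{E_n}$ are controlled via Cauchy--Schwarz by the perimeter, so up to a further subsequence $\mathrm{\bar H}_{E_n}\to c\in\R$.

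\emph{Identification of the limit.} Passing to the limit in the first variation formula, $E_\infty$ satisfies $\mathrm{H}=c$ in the distributional sense. By Alexandrov's theorem for sets of finite perimeter with constant weak mean curvature (Delgadino--Maggi and subsequent refinements), $E_\infty$ must then be a finite disjoint union of balls of equal radius. Since $k$ disjoint balls of equal radius and total volume $|B_1|$ have perimeter $4\pi k^{1/3}$, the strict bound $P(E_\infty)<4\pi\sqrt[3]{2}$ excludes $k\geq 2$; combined with the barycentric normalisation this gives $E_\infty=B_1$, and consequently $c=2$.

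\emph{Nearly spherical reduction.} The next step is to upgrade $L^1$-convergence to $C^{1,\alpha}$-convergence of $\partial E_n$ to $\mathbb{S}^2$, so that $\partial E_n=\{(1+u_n(x))x:x\in\mathbb{S}^2\}$ with $\|u_n\|_{C^{1,\alpha}}\to 0$. Standard Taylor expansions then give
\[
P(E_n)-P(B_1)=\tfrac12\int_{\mathbb{S}^2}\bigl(|\nabla u_n|^2-2u_n^2\bigr)d\mathcal H^2+o(\|u_n\|_{H^1}^2),
\]
after absorbing the linear term via the volume constraint $\int_{\mathbb{S}^2}u_n\,d\mathcal H^2=-\tfrac12\int u_n^2+o(\|u_n\|_{H^1}^2)$, and
\[
\|\mathrm{H}_{E_n}-\mathrm{\bar H}_{E_n}\|_{L^2(\partial E_n)}^2=\int_{\mathbb{S}^2}(\Delta_{\mathbb{S}^2} u_n+2u_n-c_n)^2\,d\mathcal H^2+o(\|u_n\|_{H^2}^2),
\]
with $c_n$ a normalising constant. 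The barycentric condition linearises to $\int_{\mathbb{S}^2}u_n\,x\,d\mathcal H^2=o(\|u_n\|_{H^1})$, so the first-order spherical harmonic modes of $u_n$ are negligible. Decomposing $u_n$ along the eigenspaces of $-\Delta_{\mathbb{S}^2}$ with eigenvalues $\ell(\ell+1)$, $\ell\geq 0$, and using the spectral gap $\ell(\ell+1)-2\geq 4$ for $\ell\geq 2$, both quadratic forms become equivalent to the norm of the projection of $u_n$ orthogonal to $\mathrm{span}\{1,x_1,x_2,x_3\}$ (the $H^1$-norm for the perimeter, the $H^2$-norm for the curvature); this yields $P(E_n)-P(B_1)\leq C\|\mathrm{H}_{E_n}-\mathrm{\bar H}_{E_n}\|_{L^2}^2$ for $n$ large, a contradiction.

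\emph{Main obstacle and the smooth case.} The most delicate step is the regularity upgrade, since in $\R^3$ the $L^2$-bound on $\mathrm{H}$ is critical for Allard-type $\varepsilon$-regularity; one exploits the specific structure $\mathrm{H}_{E_n}=\mathrm{\bar H}_{E_n}+(\mathrm{H}_{E_n}-\mathrm{\bar H}_{E_n})$ with $\mathrm{\bar H}_{E_n}\to 2$ bounded, first obtaining Hausdorff closeness to $B_1$ via the quantitative Alexandrov theorem for sets of finite perimeter, and then bootstrapping regularity through the quasilinear equation satisfied by the graph parametrisation once $\partial E_n$ is locally a small Lipschitz graph. The second assertion is essentially a by-product: under $\|\mathrm{H}_E-\mathrm{\bar H}_E\|_{L^2}\leq\delta_1$ with $\delta_1$ small, the same compactness argument makes $\partial E$ a $C^{1,\alpha}$-normal graph over $\mathbb{S}^2$, and a closed $C^\infty$ surface that is a $C^0$-graph over $\mathbb{S}^2$ is automatically diffeomorphic to it.
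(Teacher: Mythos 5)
Your contradiction / blow-up scheme is the natural first attempt, and your outline of the Fuglede-type spectral-gap computation at the end is essentially the same computation the paper invokes (via \cite{MoPoSpa}) in the final step. However, you correctly identify the bottleneck — upgrading $L^1$-closeness to a $C^{1,\alpha}$ normal-graph parametrisation over $\mathbb{S}^2$ — and it is precisely there that your argument has a genuine gap. The assertion that one can ``bootstrap regularity through the quasilinear equation satisfied by the graph parametrisation once $\partial E_n$ is locally a small Lipschitz graph'' is unjustified twice over. First, Hausdorff closeness to the sphere (which is available, via Theorem~\ref{thm:JN} from \cite{JN}) does \emph{not} imply that $\partial E_n$ is a small-Lipschitz graph: the surfaces may oscillate at fine scales with small amplitude but unbounded slope, and nothing in the hypotheses excludes this. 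Second, and more fundamentally, even if one had a Lipschitz graph, the available control on the mean curvature is only $\mathrm{H}_{E_n}-\mathrm{\bar H}_{E_n}\to 0$ in $L^2(\partial E_n)$, and $L^2$ is exactly the critical exponent $L^{n-1}$ in $\R^3$: Allard's regularity theorem and the corresponding Schauder/De Giorgi--Nash--Moser bootstraps for the graph equation all require $\mathrm{H}\in L^p$ with $p>n-1$. The paper explicitly flags this (``$L^2$ ... is not enough to apply Allard regularity theorem''), and the entire novelty of its argument is designed to circumvent this obstruction. There is no straightforward elliptic bootstrap available here, so the contradiction argument stalls precisely at this step.

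The paper's resolution is a genuinely different route. Rather than extracting uniform regularity of an arbitrary almost-CMC sequence (which appears out of reach), the paper converts the inequality into a variational problem: it shows (Proposition~\ref{t.main}) that for small $\eps$ the regularized functional $J_\eps(E)=\|\mathrm{H}_E-\mathrm{\bar H}_E\|_{L^2(\partial E)}^2-\eps P(E)$ is minimized by $B_1$ within the admissible class. A minimizing sequence is reparametrised as conformal weak immersions of $\mathbb{S}^2$ (using Corollary~\ref{l.fra1} to produce conformal parametrisations with uniformly bounded conformal factor), and the problem is related to minimizing a Canham--Helfrich energy in the class $\mathcal E_{A_\infty,V_\infty}$ of weak immersions with prescribed area and volume. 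The existence and smoothness of that minimizer follows from \cite{MoRibub,MonSch,RivLec} (Theorem~\ref{CH energy's minimizing}), crucially exploiting that the Willmore energy stays below $8\pi$, which rules out bubbling and branch points. Only then — for the smooth minimizer, not for the original sequence — does the paper derive uniform $W^{2,2}$ and $C^0$ regularity estimates on the mean curvature from the Euler--Lagrange equation \eqref{eq:eulerlagr} via interpolation, Simons' identity, and Michael--Simon, reaching the nearly-spherical regime where the Fuglede-type bound from \cite{KM,MoPoSpa} applies. This is the step your proposal lacks: the smoothness and quantitative regularity must be wrung out of the minimality of a carefully chosen auxiliary functional, not from the a priori bounds on an arbitrary competitor.

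Two smaller points. Your identification of the limit as $B_1$ via Delgadino--Maggi requires more care than stated: passing to the limit in the first variation needs a varifold compactness argument and one must reconcile the varifold limit with the $L^1$-limit (the paper instead gets closeness to $B_1$ directly from Theorem~\ref{thm:JN}/Corollary~\ref{l.vesa}, with a quantified power). Your proof of the last assertion about diffeomorphism to $\mathbb{S}^2$ — ``a closed $C^\infty$ surface that is a $C^0$-graph over $\mathbb{S}^2$ is automatically diffeomorphic to it'' — again presupposes the graph structure, which is not available; the paper instead obtains genus zero from the Willmore energy bound \eqref{feq9}, which is cheap and robust.
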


Let us make some comments on the result. The inequality \eqref{the-inequality} is a 
Łojasiewicz--Simon inequality for the perimeter functional with optimal exponent $2$: indeed, \eqref{the-inequality} bounds the perimeter with the sharp power of the $L^2$-norm of its first variation, which is the mean-curvature.
We also call \eqref{the-inequality}  a quantitative Alexandrov inequality because when the right hand side is zero, i.e., when $\pa E$ has constant mean curvature, then $P(E) \leq P(B_1)$, which by the isoperimetric inequality means that $E$ is the ball. This is the famous Alexandrov theorem and \eqref{the-inequality}  is a sharp quantitative version of it, in the sense that the exponent two on the right hand side is optimal. Indeed,  \eqref{the-inequality}  is false for any exponent higher than two, see \cite[Remark 2.2]{JuMoPoSpa}. 

There  has been an increasing interest  on generalizations and quantifications of the Alexandrov theorem in recent years. We refer to \cite{Ci} for an overview of this challenging problem, and mention the works \cite{DM, DMMN, RKS} on the  characterization of critical sets of the isoperimetric problem and \cite{CM,CV, KM} on quantification of the Alexandrov theorem. The main issue in the problem in Theorem \ref{t:LS} is that we do not have any a priori estimates, such as curvature bounds, for the set $E$ other than the perimeter bound in \eqref{e.condition}, which prevents the bubbling phenomenon.  Moreover, we measure the distance of the mean curvature to a constant with respect to $L^2$-norm, which in three dimension is not enough to apply Allard regularity theorem to deduce a priori regularity estimates for the set $E$. This makes the problem much more challenging than the two dimensional case. However, the inequality \eqref{the-inequality} is suitable for the study of the geometric equations, which was  our main motivation to consider the problem in the first place.  

We note that the right hand side \eqref{the-inequality} in  $\R^3$ is  special, since it is scaling invariant. The novelty of the paper is that we prove the  inequality \eqref{the-inequality} by means of results from conformal  geometry, by transforming the problem into a minimization problem involving  Canham-Helfrich energy functional. Indeed, using the estimates from \cite{JN} we are able to restate the problem using parametizations by weak immersions from $\SSS^2$ to $\R^3$ which allow us to use the results \cite{MoRibub, MonSch, RivLec} to obtain the existence and regularity of the minimizer of an auxiliary problem (see Proposition \ref{t.main}). We then derive uniform regularity estimates which enable us to prove \eqref{the-inequality}. 

The closest result to Theorem \ref{t:LS} in the literature is the quantitative Willmore inequality due to R\"oger-Sch\"aztle \cite{RS}, which roughly states that for $E$ as in Theorem \ref{t:LS} and $\partial E$ diffeomorphic to the sphere it holds 
\begin{equation}\label{eq:roger-schatzle}
P(E)-P(B_1)\leq C( \|\mathrm{H}_E \|_{L^2(\partial E)}^2 - 16 \pi).
\end{equation}
 However, the inequality \eqref{the-inequality} is stronger than  \eqref{eq:roger-schatzle} (in the sense that  \eqref{the-inequality} implies \eqref{eq:roger-schatzle}) and \eqref{eq:roger-schatzle} is not enough for the applications to geometric flows. Moreover, the proof of \eqref{eq:roger-schatzle} is completely different to \eqref{the-inequality}, as it relies on the quantitative  result on nearly umbilical sets due to De Lellis--M\"uller \cite{DLM}. 

\subsection{Application I: volume preserving MCF}
As we already mentioned, our main motivation to prove Theorem \ref{t:LS} is to study the asymptotic behavior of volume preserving geometric flows. We begin with the  {\em volume preserving mean curvature flow} and recall that  a smooth flow of sets $(E_t)_{t\in [0,T)}\subset \R^3$, for some $T>0$, is a solution to the volume preserving mean curvature flow if it satisfies
\begin{equation}\label{eq:VMCF}
V_t = -\mathrm{H}_{E_t} + \mathrm{\bar H}_{E_t}  \quad\text{on }\pa E_t\subset\R^3,
\end{equation}
where $V_t$ denotes the outer normal velocity. Such a geometric flow has been proposed in the physical literature as a model for coarsening phenomena. We refer to \cite{CRCT95, MuSe13,TW72,W61} for an introduction to the physical background.  For us the most important feature is that \eqref{eq:VMCF} can be seen  as a gradient flow of the perimeter with respect to a suitable (formal) $L^2$-type Riemannian structure and it preserves the volume.

In general smooth solutions of \eqref{eq:VMCF} may develop singularities  in finite time and therefore we need a suitable notion of weak solution  which is defined for all times. 
The main difference between  \eqref{eq:VMCF} and the mean curvature flows is that \eqref{eq:VMCF} is non-local and does not satisfy the comparison principle, and therefore it is not clear how to define a level set solution to it. A well established choice  for a weak solution of \eqref{eq:VMCF}  is the minimizing movement approach proposed for the mean curvature flow independently by Almgren, Taylor and Wang \cite{ATW} and by Luckhaus and Sturzenhecker \cite{LS}, and adapted to the volume constrained case in \cite{MSS}. Here we use the definition from \cite{Vesa} as it simplifies the analysis. We recall that the minimizing movement method is based on the gradient flow structure of the flow. To be more precise, one uses  implicit time-discretization and recursive minimization of a suitable incremental problem to construct a discrete-in-time approximation of a solution to the equation \eqref{eq:VMCF} and defines any of its cluster points, as the time step converges to zero, as \emph{a flat flow} solution of \eqref{eq:VMCF}. For the precise definition see Section \ref{sec:3bis}. By the results in  \cite{ATW, LS, MSS} the flat flow, starting from any bounded set of finite perimeter, always exists  and is H\"older continuous in the $L^1$-topology. Moreover by the result  in \cite{JN2} it coincides with the unique smooth solution as long as the latter exists. For other possible notions of weak solutions we refer to \cite{KK} for a viscosity solution for star-shaped sets, and the gradient flow calibration in \cite{Laux}. We also mention the recent works on the phase-field approximation to \eqref{eq:VMCF} via  the Allen-Cahn equation, which converges to a Brakke-type solution of \eqref{eq:VMCF} \cite{Taka1, Taka2}, to a distributional solution if the convergence is assumed to be strong in BV by \cite{LaSi18}, and the consistency is proven in \cite{KL}.

Since the flat flow is defined globally in time  we may study its asymptotics. Apart from our previous work \cite{JuMoPoSpa} that we already mentioned, the long-time convergence results in the literature  are confined to the case when the classical solution is defined for all times, e.g., when the initial set is convex  \cite{Hui}, or when the initial set is close to the ball \cite{ES} or to a general local minimum in the case of a flat torus \cite{Joonas, DDKK}. The starting point of our analysis is the result by first author and Niinikoski \cite{JN} on the qualitative asymptotic convergence of \eqref{eq:VMCF}, which is stated as follows:

\textit{Given any volume preserving flat flow $\{E(t)\}_{t\geq 0}$ in $\R^3$ starting from a bounded set of finite perimeter $E(0)\subset\R^3$ with volume $|E(0)| = v$, there exist $N \in \N$ and time-dependent points
\begin{equation}\label{e.centers}
x_1(t), \dots, x_N(t) \in \R^3, \quad |x_i(t) -x_j(t)| \geq 2r\quad\forall\;i\neq j,
\end{equation}
such that setting $r = \left( \frac{3 v}{4 \pi N}\right)^{1/3}$ and $F(t) = \bigcup_{i =1}^N B_r(x_i(t))$ we have that
\beq\label{asym}
\sup_{x\in E(t) \Delta F(t)} \dist(x, \pa F(t)) \to 0  \quad \text{as } \, t \to \infty.
\eeq
}
We remark that it is possible that the flow converges to a union of tangent balls \cite{FJM}, but  it is expected that this happens only in special cases, and that generically the limiting set is a unique ball. Based on this, we show that if in the above theorem, the set $F(t)$ is just one ball, or if the balls have positive distance to each other, then we may upgrade the above  convergence result to a full  quantitative exponential convergence  to a union of (independent of time) balls. In both cases, the crucial estimate is given by Theorem \ref{t:LS}.  We prove the following result.  
\begin{theorem}\label{thm2:MCF}
Let $\{E(t)\}_{t\geq 0}$, $N \in \N$ and the points  $x_1(t), \dots, x_N(t)$ in $\R^3$ be as \eqref{e.centers} above and assume in addition that there exists $\delta_1>0$ and $T_0$ such that 
\beq\label{separated}
|x_i(t) -x_j(t)| \geq 2r + \delta_1 \quad \text{ for all } \, t \geq T_0
\eeq
for $i \neq j$. Then the sets  $E(t)$ converge exponentially fast to  $F = \bigcup_{i =1}^N B_r(x_i)$, for some $x_1,\dots, x_N\in \R^3$ with  $|x_i -x_j| >2r$, $i \neq j$. To be more precise, we have
\[
\sup_{x\in E(t) \Delta F} \dist(x, \pa F) + |P(E(t)) - N 4 \pi r^2 | \leq C e^{-\frac{t}{C}}
\]  
for a constant $C>1$. 
\end{theorem}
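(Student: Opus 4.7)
The plan is to upgrade the Hausdorff convergence in \eqref{asym} to an exponential rate via a \L ojasiewicz--Simon differential inequality, by combining the sharp quantitative Alexandrov inequality of Theorem \ref{t:LS} with the dissipation built into the minimizing-movement definition of the flat flow. I first use the separation hypothesis \eqref{separated} together with \eqref{asym} and the uniform $\Lambda$-minimality of the incremental minimizers (from which one inherits density and $C^{1,\alpha}$ estimates after passing to the limit $h\to 0$, as in \cite{JN}) to show that there exists $T_1\geq T_0$ such that for every $t\geq T_1$ the set $E(t)$ has exactly $N$ connected components $E_1(t),\dots,E_N(t)$, each trapped between slightly shrunk and slightly enlarged concentric balls around $x_i(t)$ and with $|E_i(t)|=\frac{4}{3}\pi r^3+o(1)$.

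Since each component has volume close to $|B_r|$ and perimeter close to $4\pi r^2$, strictly below the bubbling threshold $4\pi r^2 \sqrt[3]{2}-\delta_0$, Theorem \ref{t:LS} applies (after rescaling to unit volume) and yields
\[
P(E_i(t))-4\pi r^2\leq C\,\|\mathrm{H}_{E_i(t)}-\mathrm{\bar H}_{E_i(t)}\|_{L^2(\partial E_i(t))}^2 .
\]
Summing over $i$ and comparing the componentwise averages $\mathrm{\bar H}_{E_i(t)}$ with the global Lagrange multiplier $\lambda(t)$ associated to the volume constraint in the incremental problem (both close to $2/r$), this gives
\[
P(E(t))-4\pi Nr^2\leq C\,\|\mathrm{H}_{E(t)}-\lambda(t)\|_{L^2(\partial E(t))}^2 .
\]
On the other hand, the discrete minimality of the incremental scheme yields, in the limit $h\to 0$, the dissipation inequality $\frac{d}{dt}P(E(t))\leq -\|\mathrm{H}_{E(t)}-\lambda(t)\|_{L^2(\partial E(t))}^2$ in the distributional sense. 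Setting $D(t):=P(E(t))-4\pi N r^2\geq 0$, these two bounds combine into $D'(t)\leq -D(t)/C$, whence $D(t)\leq Ce^{-t/C}$.

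To convert this exponential decay of the perimeter excess into the Hausdorff decay stated in the theorem, I would first show, by Schauder-type estimates adapted to the incremental minimizers and propagated to the flat flow, that for $t$ large each $\partial E_i(t)$ is a $C^{2,\alpha}$-graph of a function $u_i(\cdot,t)$ over $\partial B_r(x_i(t))$ with uniformly small norm. Interpolation between the exponential $L^2$-decay of $u_i$ (implied by the decay of $D$ and of $\|\mathrm{H}-\lambda\|_{L^2}$) and the $C^{2,\alpha}$-bound then yields exponential $L^\infty$-decay. To obtain a \emph{fixed} limit configuration, I would control the motion of each center by $|\dot x_i(t)|\leq C\|V_t\|_{L^2(\partial E_i(t))}\leq C\|\mathrm{H}-\lambda\|_{L^2}$ and integrate: the exponential decay of the right-hand side gives $x_i(t)\to x_i$ exponentially, and the full estimate on $\sup_{x\in E(t)\Delta F}\dist(x,\pa F)$ follows.

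The main obstacle is this last step: one must establish \emph{uniform} higher regularity (say $C^{2,\alpha}$) of the components $E_i(t)$ for all large $t$, working with the flat flow, which is defined only as a limit of discrete schemes. This requires carrying out Schauder estimates uniformly at the level of the incremental minimizers (whose Euler--Lagrange equation involves the signed distance from the previous step and a Lagrange multiplier) and then passing to the limit $h\to 0$. Once this uniform regularity is in hand, the interpolation argument and the center estimate follow by standard means, and Theorem \ref{thm2:MCF} is then a consequence of the \L ojasiewicz--Simon inequality supplied by Theorem \ref{t:LS}.
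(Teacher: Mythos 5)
Your high-level strategy --- combining the sharp quantitative Alexandrov inequality with the gradient-flow dissipation to close a \L ojasiewicz--Simon-type inequality --- is exactly what drives the paper's argument, and your componentwise reduction is essentially what Proposition \ref{prop:5-1} carries out. The genuine gap, which you partly identified yourself, is that you try to run the argument at the level of the continuous-time flow. The flat flow $E(t)$ is only an $L^1$-cluster point of the discrete scheme; it is not known to be $C^2$, $t\mapsto P(E(t))$ is not known to be absolutely continuous, and the objects $\mathrm{H}_{E(t)}$ and $\lambda(t)$ are not defined for the limit. Consequently the inequality $\tfrac{d}{dt}P(E(t))\leq -\|\mathrm{H}_{E(t)}-\lambda(t)\|_{L^2}^2$ ``in the distributional sense'' is not available; under $L^1$-convergence the perimeter is only lower semicontinuous and the $L^2$-norm of the first variation only lower semicontinuous as well, which is the wrong direction for the inequality you need. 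The paper instead works entirely at the discrete level, where the incremental minimizers $E_k^{(h)}$ are genuinely $C^{2,\alpha}$: it combines the energy comparison \eqref{eg:energy-compa}, the Euler--Lagrange equation \eqref{eg:Euler-Lag} and the estimate \eqref{eg:dissipation-0} to produce $\sum_{k\ge i}a_k\leq C a_i$ with $a_k=h_n^{-1}\mathcal D(E_{k+1}^{(h_n)},E_k^{(h_n)})$, applies the elementary discrete lemma (Lemma \ref{lem:an1}) to get exponential decay of these sums, and only then passes to the $L^1$-limit using \cite[Proposition 3.4]{MSS}.

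The second gap concerns the upgrade to Hausdorff decay. Your proposal rests on uniform $C^{2,\alpha}$ Schauder estimates for $\partial E_i(t)$, to be obtained by Schauder estimates on the incremental minimizers and a passage to the limit. This would not go through: the Euler--Lagrange equation \eqref{eg:Euler-Lag} contains the source term $d_{E_k^{(h)}}/h$, so naive elliptic estimates yield constants that degenerate as $h\to 0$, and there is no known mechanism producing uniform-in-$h$ $C^{2,\alpha}$ bounds for the flat flow (this would amount to a regularity theorem for the limit, which is a separate and delicate matter, cf.~\cite{JN2}). The paper circumvents higher regularity entirely: once exponential decay of the summed dissipations is established, it uses the mean-value theorem to select, near each $t$, a time $t_n$ at which $\|\mathrm{H}_{E^{(h_n)}(t_n)}-\mathrm{\bar H}_{E^{(h_n)}(t_n)}\|_{L^2}$ is itself exponentially small, feeds this into the non-sharp Theorem \ref{thm:JN} --- which converts an $L^2$ mean-curvature bound directly into a Hausdorff bound without any $C^{2,\alpha}$ control --- and then uses the monotonicity of $t\mapsto P(E^{(h_n)}(t))$ to transfer from $t_n$ to $t$. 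Finally, your proposal omits the degenerate case in which the limiting perimeter function equals $4\pi N^{1/3}$ from some finite time onward; there the perimeter excess vanishes identically and is no longer comparable to the dissipation, and the paper must argue separately (Case~2) that the flow freezes and $E(t)\equiv F$ for all large $t$.
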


In particular, if the set $F(t)$ in \eqref{asym} is just one ball, then by Theorem \ref{thm2:MCF} we have the full convergence of the flow. For example, if the initial set satisfies  the condition \eqref{e.condition}, i.e., $|E(0)| = |B_1|$ and $P(E(0))\leq 4 \pi \, \sqrt[3]{2} -\delta_0 $, or if  there is $T>0$ such that $E(t)$  satisfies  the  condition \eqref{e.condition} for all $t \geq T$, then the flow converges exponentially fast to a ball. We recall that we do not need any assumptions on the  regularity, other than $E(0)$ being set of finite perimeter,  or topological assumptions on the initial set. 

\subsection{Application II: Mullins-Sekerka}
The second geometric evolution  equation that we consider, is  the  two-phase Mullins-Sekerka. We will define the flow in the three dimensional flat torus of side length $R$, and assume $R$ large, instead of considering a bounded domain to avoid boundary effects. Note also that the two-phase flow is not well defined in the whole space $\R^3$.  We choose the side length to be large  with respect to the volume of the set, to rule out other possible limit sets than union of  balls. We note that the isoperimetric problem for a general volume in the flat torus is a well-known open problem. 
The Mullins-Sekerka is then given by  the dynamics  
\begin{equation}
\label{eq:mullins}
\begin{cases} 
V_t = [\pa_\nu u_{E(t)}] \quad \text{on } \, \pa E(t)\\
\Delta u_{E(t)} = 0 \quad \text{in } \,  \T_R^3 \setminus \pa E(t)\\
u_{E(t)} = H_{E(t)} \quad \text{on } \, \pa E(t),
\end{cases}
\end{equation}
where $[\pa_\nu u_{E(t)}]$ denotes the jump of the normal. 

Similar to \eqref{eq:VMCF}, also the Mullins-Sekerka is a non-local, volume preserving mean curvature flow. It can also be seen as a gradient flow of the surface area but this time with respect to a suitable $H^{-\frac12}$-Riemannian structure. It can be seen as a quasistatic variant of the Stefan problem  \cite{Luck} and as a singular limit of the Cahn-Hilliard equation, see \cite{AlBaCh, Pego}. For a more comprehensive introduction we refer to \cite{JuMoPoSpa} and the reference therein. 

There is no result similar to \eqref{asym} for the Mullins-Sekerka in the literature. Therefore we need first to prove similar qualitative convergence for \eqref{eq:mullins}, and after that we may prove the exponential convergence similar to Theorem \ref{thm2:MCF}. Here is our result for the asymptotical behavior of \eqref{eq:mullins}. 
\begin{theorem}\label{thm3:mullins}
Fix $v, M>0$. Then there exists $R_0 \geq 1$, depending on $v$ and $M$, with the following property: Let  $\{E(t)\}_{t\geq 0}$  be a  flat flow for \eqref{eq:mullins}  in $\T_R^3$, $R \geq R_0$,  starting from a set of finite perimeter $E(0)\subset\T_R^3$ with volume $|E(0)| = v$ and $P(E(0)) \leq M$. There exist $N \in \N$ and time-dependent points $x_1(t), \dots, x_N(t)$ in $\T_R^3$ such that setting $r = \left( \frac{3 v}{4 \pi N}\right)^{1/3}$ and $F(t) = \bigcup_{i =1}^N B_r(x_i(t))$ it holds $|x_i(t) -x_j(t)| \geq 2r$, $i \neq j$,
\[
| E(t) \Delta F(t)| \to 0  \quad \text{as } \, t \to \infty.
\]
Assume in addition that there exists $\delta_1>0$ and $T_0$ such that 
\[
|x_i(t) -x_j(t)| \geq 2r + \delta_1 \quad \text{ for all } \, t \geq T_0
\]
for $i \neq j$. Then, the sets  $E(t)$ converge exponentially fast to a (independent of time) union of balls
\[
F = \bigcup_{i =1}^N B_r(x_i),\quad \text{ where }\;|x_i -x_j| >2r\quad i \neq j.
\]
Precisely, there exists a constant $C>1$ such that
\[
|E(t) \Delta F| + |P(E(t)) -  N 4 \pi r^2 | \leq C e^{-\frac{t}{C}}.
\]  
\end{theorem}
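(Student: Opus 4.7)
The plan is to follow the same two-step scheme used to prove Theorem \ref{thm2:MCF}, with the extra difficulty that no qualitative convergence analogue of \eqref{asym} is available in the literature for \eqref{eq:mullins} in $\T_R^3$: this must be established as a preliminary step. The starting point is the dissipation inequality for the flat Mullins--Sekerka flow,
\[
P(E(t))+\int_0^t\!\!\int_{\T_R^3\setminus\pa E(s)}|\nabla u_{E(s)}|^2\,dx\,ds\le P(E(0))\le M,
\]
which yields a uniform perimeter bound and $\int_0^\infty\|\nabla u_{E(s)}\|_{L^2}^2\,ds<\infty$. Along some sequence $t_k\to\infty$ one thus has $\|\nabla u_{E(t_k)}\|_{L^2}\to 0$, and BV-compactness produces an $L^1$-limit $E_\infty$ of a subsequence. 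To identify $E_\infty$ as a disjoint union of balls, the vanishing Dirichlet energy is upgraded, in a varifold sense, to local constancy of the generalized mean curvature of $\pa E_\infty$; the choice of $R\ge R_0$ large (depending on $v,M$) excludes nonspherical topological competitors in $\T_R^3$, so that an Alexandrov-type rigidity result in the varifold setting, in the spirit of Delgadino--Maggi, gives $E_\infty=\bigcup_{i=1}^N\overline{B_r(x_i)}$ with $r=(3v/(4\pi N))^{1/3}$. Full (non-subsequential) $L^1$-convergence is then secured by a Łojasiewicz--Simon argument in the style of \cite{JuMoPoSpa}: Theorem \ref{t:LS} applied componentwise to $E(t)$ for $t$ large, combined with the dissipation identity, forbids oscillation among distinct limits, and the centers $x_i(t)$ arise from continuous tracking of the connected components.

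For the quantitative second part, the separation hypothesis \eqref{separated} ensures that for $t\ge T_0$ the components of $E(t)$ are uniformly well separated and each close in $L^1$ to a ball of radius $r$. Theorem \ref{t:LS}, applied componentwise, gives
\[
P(E(t))-4\pi Nr^2\le C\,\|\mathrm{H}_{E(t)}-\mathrm{\bar H}_{E(t)}\|_{L^2(\pa E(t))}^2,
\]
and a Poincar\'e--spectral-gap inequality for surfaces close to separated spheres yields the dual bound
\[
\|\mathrm{H}_{E(t)}-\mathrm{\bar H}_{E(t)}\|_{L^2(\pa E(t))}^2\le C\int_{\T_R^3\setminus\pa E(t)}|\nabla u_{E(t)}|^2\,dx,
\]
exploiting that the Dirichlet energy of the harmonic extension controls the $\dot H^{1/2}$-seminorm of the boundary trace componentwise, that the inter-component gaps $\bar H_i-\bar H$ are dominated by the exterior Dirichlet energy, and that $\dot H^{1/2}\hookrightarrow L^2$ on mean-zero functions on near-spheres. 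Chaining these estimates with the dissipation identity produces the differential inequality
\[
\tfrac{d}{dt}\bigl(P(E(t))-4\pi Nr^2\bigr)\le -\tfrac{1}{C}\bigl(P(E(t))-4\pi Nr^2\bigr),
\]
to be interpreted in the discrete/distributional sense appropriate to the flat flow, and Gronwall yields $P(E(t))-4\pi Nr^2\le Ce^{-t/C}$. The $L^1$-exponential decay $|E(t)\Delta F|\le Ce^{-t/C}$ then follows from the Fuglede-type sharp quantitative isoperimetric inequality applied componentwise, and the exponential convergence of the centers $x_i(t)\to x_i$ is obtained by integrating the barycentric ODE $\dot x_i(t)\approx\frac{1}{|B_r|}\int_{\pa B_i(t)}[\pa_\nu u_{E(t)}]\,\nu\,d\mathcal H^2$ and bounding its right-hand side by the exponentially decaying dissipation.

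The principal obstacle is the qualitative convergence step, in the absence of a ready-made analogue of \cite{JN}: the $L^2$-smallness of the mean curvature deficit is borderline for Allard $\eps$-regularity in dimension three, forcing a passage through the weak-immersion and Canham--Helfrich framework already used to establish Theorem \ref{t:LS}. The largeness of $R$ relative to $v$ and $M$ plays a dual role here, guaranteeing both compactness of the limit configurations and that any subsequential limit with controlled perimeter and locally constant generalized mean curvature is a disjoint union of balls rather than a slab, tube, or other nontrivial stationary varifold in the torus.
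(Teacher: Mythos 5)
Your high-level blueprint (qualitative convergence along a subsequence of good times, then Łojasiewicz-type quantitative convergence under the separation hypothesis, driven by Theorem \ref{t:LS} applied componentwise) matches the paper's strategy, and the elementary discrete ingredients you invoke — dissipation, $H^{-1}$ bound, interpolation, Lemma \ref{lem:an1} — are exactly the ones used. However, there is a genuine gap in the central technical step, and your qualitative step follows a route the paper deliberately avoids.

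The crux of the Mullins--Sekerka argument is converting the smallness of $\|\nabla u_{E(t)}\|_{L^2(\T_R^3)}$, which the dissipation controls, into smallness of $\|\mathrm{H}_{E(t)}-\bar{\mathrm{H}}_{E(t)}\|_{L^2(\pa E(t))}$, which Theorem \ref{t:LS} requires. Your proposal to pass through a $\dot H^{1/2}$-trace estimate and a Poincar\'e spectral-gap inequality ``for surfaces close to separated spheres'' presupposes precisely the boundary regularity one is trying to establish: the harmonic-extension trace bound $\|u\|_{\dot H^{1/2}(\pa E)}\lesssim\|\nabla u\|_{L^2}$ requires $\pa E$ to be uniformly Lipschitz (or at least an extension domain), and the $\dot H^{1/2}\hookrightarrow L^2$ near-sphere step requires $C^1$-closeness to spheres. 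But at the discrete time steps $\pa E_{k+1}^{(h)}$ is only $C^{2,\alpha}$ with \emph{no uniform} constants, and as the paper stresses, an $L^2$ bound on the mean curvature in 3D is precisely the borderline case where Allard regularity fails, so no a priori uniform Lipschitz or $C^1$ bound is available. The paper's resolution is to first prove density estimates for $\pa E$ (Lemma \ref{lem:schatzle}, following Sch\"atzle) using only the PDE structure $\mathrm{H}_E=-u_E+\lambda$ and the $L^2$ bound on $\nabla u_E$ — this requires a nontrivial $R$-independent bound on the Lagrange multiplier, obtained by testing against a carefully built auxiliary potential $\zeta$ — and then apply the Meyers--Ziemer trace inequality \eqref{eq:meyers-ziemer} to the scalar function $\varphi=u_E^2$, which only needs Ahlfors regularity of $\pa E$. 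This yields $\|u_E\|_{L^2(\pa E)}^2\leq C'\|D u_E\|_{L^2(\T_R^3)}^2$ without any spectral-gap or near-sphere structure and without any intermediate $H^{1/2}$ step. Your proposal does not address why your trace inequality should hold without the regularity input that the density estimates provide, so as written it has a circularity/gap.

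For the qualitative step you suggest a varifold rigidity argument in the spirit of Delgadino--Maggi; the paper instead exploits that each discrete-in-time set is already $C^{2,\alpha}$ and applies Theorem \ref{thm:JN} (together with \eqref{ffeq1}) at a sequence of good times chosen via the mean-value theorem from the integrated dissipation, then uses the H\"older estimate of Lemma \ref{lem:luckhaus} and monotonicity of $t\mapsto P(E^{(h)}(t))$ to stabilize the number $N$ of balls and interpolate between good times. This is simpler and sidesteps varifold convergence and limit-varifold rigidity entirely. Finally, the exponential convergence of the centers is not derived from a barycentric ODE in the paper: the $L^1$ exponential decay comes directly from summing the discrete dissipation (Lemma \ref{lem:an1}) and then using \eqref{def:Hmenouno-bound} together with the interpolation inequality \eqref{eq:interpolationLS}; the convergence of the centers is a byproduct. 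Your ODE argument is not obviously wrong, but it is extra machinery for which no error estimates are provided, and it is not needed.
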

As in the two-dimensional case, in addition to  Theorem \ref{t:LS} we need also the density estimates due to Sch\"aztle \cite{Sch}  in order to prove a quantitative Alexandrov theorem in terms of the potential $u_{E(t)}$, see Proposition \ref{prop:quantialex-ms}. We remark that we expect  the convergence in Theorem \ref{thm3:mullins} to hold also with respect to the Hausdorff distance, as is the case in Theorem~\ref{thm2:MCF}.

\section{Notation and Preliminaries}

We begin with the notation related to the Euclidian space. We denote the open ball with radius $r$ centered at $x$ by $B_r(x)$ and by $B_r$ if it is centered at the origin.  For $X \in C^1(U;\R^n)$, $U \subset \R^n$ open, we denote its differential by $DX$ and the divergence by $\div X = \text{Tr}(DX)$. Similarly for a real valued function $u \in C^2(U)$ we denote the gradient by $Du$, the Hessian by $D^2 u$ and the Laplacian by $\Delta u$. We point out  that  we will use the symbol $\nabla$  for the covariant derivative on a manifold, which we will introduce shortly. However, with a slight abuse of notation we will still use the symbols $\Delta$ and $''\div''$ for the Laplace-Beltrami operator and the divergence on a manifold respectively. The use will be clear from the context.  

 Given a set $E \subset \R^n$, we define the distance function $\dist(\cdot, E):\R^n\to [0,+\infty)$, as usual $\dist(x, E)= \inf_{y\in E}|x-y|$, and the signed distance function $d_E:\R^n\to \R$ as
\begin{equation}\label{def:signdist}
d_E(x) = \dist(x, E) -  \dist(x, \R^n \setminus E)\,.
\end{equation}
Then clearly it holds $\dist(\cdot, \partial E) =|d_E|$. For any Lebesgue measurable set $ E \subset \R^n$ we denote by $|E|$ its Lebesgue measure and define the perimeter of $E$ in an open set $U\subset \R^n$ as
\begin{equation}
P(E,U)=\sup\Big\{\int_{E}\div X \,dx\,:\, X\in C_0^1(U,\R^n)\,\,\,\text{with}\,\,\,\Vert X\Vert_{L^{\infty}}\leq 1\Big\}\,.
\end{equation}
We write $P(E)= P(E,\R^n)$ and if $P(E)<+\infty$, we say that $E$ is a set of finite perimeter. 
In this case, the reduced boundary of $E$ is denoted by $\partial^*E$, and the unit outer normal to $E$ by $\nu_E$.
Then, $P(E, U) = \mathcal{H}^{n-1}(\partial^* E\cap U)$ for any open set $U$.  For a  given vector field $X \in C^1(\R^n;\R^n)$  we define its tangential differential on $\partial E$ by  $\nabla_{T} X  = D X - (D X   \nu_E) \otimes  \nu_E$ and tangential divergence as  $\div^T X = \text{Tr}(\nabla_{T} X )$. If $E$ is $C^2$-regular, i.e., $\pa E $ is $C^2$-hypersurface, we define the mean curvature $\mathrm{H}_E$ as the sum of the principal curvatures and denote the second fundamental form by $\mathrm{A}_E$. We use the orientation such that $\mathrm{H}_E$ is nonnegative for convex sets. The divergence theorem extends to vector fields 
$X\in C^1(\R^n,\R^n)$ as
\[
 \int_{\partial E} \div^T X\,d\mathcal{H}^{n-1}= \int_{\partial E} \mathrm{H}_E (X\cdot \nu_E)\,d\mathcal{H}^{n-1},
\]
where $\cdot$ is the inner product in $\R^n$.  Recall that $\mathrm{\bar H}_E$ denotes  the integral average of the mean curvature, i.e. $(\int_{\pa E}\mathrm{H}_E\,d\mathcal{H}^{n-1})/ P(E).$
This notation is related to sets in  $\R^n$. We will also need notation related to more general Riemann manifolds and introduce the notation related to this. 

\subsection{Smooth immersions}\label{smoothimmersion}
Let $\Sigma$ be a $2$-dimensional, connected (unless otherwise specified), smooth manifold and consider a smooth immersion $\vec{f}: \Sigma\to \R^3$. 
In this work, we will call $\vec{f}(\Sigma)$ immersed surface and ``smooth'' always  means of class $C^\infty$. 
In such a setting, $\Sigma$ is naturally endowed with a Riemannian metric, which is $g=\vec{f}^*g_{\R^3}$, given in local coordinates as $g_{ij}=\partial_{x^{i}}\vec{f}\cdot \partial_{x^{j}} \vec{f}$. The metric $g$ can then be extended to tensors via the formula
$$g(T,S)=g_{i_{1}s_{1}} \dots g_{i_{k}s_{k}} g^{j_{1}z_{1}}\dots g^{j_{l}z_{l}} T^{ i_1...i_k}_{j_1...j_l} S^{s_1...s_k}_{z_1...z_l },$$
where $(g^{ij})$ is the inverse matrix $(g_{ij})$. We note that in the above and throughout the paper we adopt the Einstein summation convention.  The norm of a tensor $T$ is  then defined as $|T|=\sqrt{g(T,T)}$ and it satisfies the following useful properties
\begin{equation}\label{ffeq23}
|g(S,T)|\leq |S|\,|T|\quad\quad\quad|S+T|\leq |S|+|T|\quad\quad\quad |T* S|\leq C|T|\,|S|.
\end{equation}
Here $T* S$ is a tensor formed by a linear combination of new tensors, each obtained by contracting some indices of the tensors $T$ and $S$ with the metric and/or its inverse, and the constant $C$ depends only on the algebraic “structure” of $T*S$. 

One may define uniquely the  area (or canonical) measure  $\sigma$ on $\Sigma$ by imposing in any chart $(U,\varphi)$ that $\sigma (B)= \int_{\varphi(B)}\sqrt{\det g_{ij}}\circ \varphi^{-1}\,dx$. The measure $\sigma$ is then a complete, regular  Radon measure.

We denote by $\nabla$ the Levi-Civita connection of $(\Sigma,g)$. 
We may extend $\nabla$  uniquely to every bundle of tensors (by defining it in a natural way on $C^{\infty}(\Sigma)$ and by imposing the Leibniz rule and the commutativity with any contraction). In local coordinates it is given by
\begin{align*}
\bigl(\nabla_{X}T\bigr)^{i_1\dots i_r}_{j_1\dots j_s}=X^{k}\biggl[\partial_{x^k}T^{i_1\dots i_r}_{j_1\dots j_s}-\sum_{p=1}^{s}\Gamma^{l}_{kj_{p}}T^{i_1\dots i_r}_{j_1\dots j_{p-1} l\,  j_{p+1}\dots j_s }+\sum_{q=1}^{r}\Gamma^{i_{q}}_{kl}T^{i_1\dots i_{q-1}l\,  i_{q+1}\dots i_r}_{j_1\dots j_s}\biggr],
\end{align*}
where Christoffel symbols $\Gamma_{ij}^k$ are expressed in terms of the coefficients of the metric $g$ as
\begin{equation*}
\Gamma_{ij}^k=\frac{g^{kl}}{2}\left( \partial_{x^i} g_{lj} + \partial_{x^j}g_{il}- \partial_{x^l}g_{ij}\right).
\end{equation*}
We will write $\nabla^mT$ for the $m$-th iterated covariant derivative of $T$ and the formula for the interchange of covariant derivatives, which involves the Riemann tensor, is
\begin{equation} \label{eq:interchange-covariant}
\nabla^{2}T_{abj_1\dots j_s}^{i_1\dots i_r}-\nabla^{2}T_{baj_1\dots j_s}^{i_1\dots i_r}=\sum_{p=1}^{s}R_{abj_pm}g^{ml}T^{i_1\dots i_r}_{j_1\dots j_{p-1}l\,  j_{p+1}\dots j_s }+\sum_{q=1}^{r}R_{abml}g^{mi_q}T^{i_1\dots i_{q-1}l\,  i_{q+1}\dots i_r}_{j_1\dots j_s},
\end{equation}
where we recall 
\begin{equation*}
R_{ijkl}=\Bigl(\partial_{x^j}\Gamma_{ik}^m-\partial_{x^i}\Gamma_{jk}^m+\Gamma_{ik}^s\Gamma_{js}^m-\Gamma_{jk}^s\Gamma_{is}^m\Bigr)g_{ml}.
\end{equation*} 
Finally, the (rough) Laplacian $\Delta T$ of a tensor $T$ is defined as $g^{ij}\nabla^2T_{ij}$.
\smallskip

Let $\vec{f}: \Sigma\to \R^3$ be a smooth immersion of a generic connected smooth $2$-manifold $\Sigma$.
Since $d\vec{f}_p:T_p\Sigma\to \R^3$ is an injective linear map, we identify $T_p\Sigma$ with the linear subspace $d\vec{f}_p(T_p\Sigma)$ of $\R^3$, for all $p\in\Sigma$, and by virtue of this identification, at every point $p \in\Sigma$ we can define up to a sign a unit normal vector $\nu(p)$. 
Let us observe that if $\Sigma$ is closed and $\vec{f}$ is also injective, then the unit normal vector $\nu(p)$ can be chosen so that it depends globally in a $C^\infty$-way on the point $p$. Indeed, if $\Sigma$ is closed and $\vec{f}$ is also injective, then $\vec{f}$ is an embedding and $\vec{f}(\Sigma)$ admits a unique smooth structure making it into an embedded surface of $\R^3$ with the property that $\vec{f}$ is a diffeomorphism onto its image. 
Hence, $\vec{\iota}:\vec{f}(\Sigma)\to \R^3$ is a smooth embedding, the cited smooth structure on $\vec{f}(\Sigma)$ is determined by the smooth atlas formed by the charts $( \vec{f}(U), \varphi\circ {\vec{f}}^{\,\,-1})$, where $(U,\varphi)$ is any chart of $\Sigma$, and in this case, the tangent spaces $T_{\vec{f}(p)} \vec{f}(\Sigma)$ and $T_p\Sigma$ are identified with the same linear subspace of $\R^3$.
Then, by the theorem of Jordan-Brouwer \cite[Proposition 12.2]{Ben}, $\R^3\setminus \vec{f}(\Sigma)$ has two connected components and one of them is a compact smooth $3$-dimensional submanifold of $\R^3$ with boundary $\vec{f}(\Sigma)$. Accordingly, there exists $\bar\nu:\vec{f}(\Sigma)\to \R^3$ global unit normal smooth vector field {\em pointing outward}, as well as $\bar N:\vec{f}(\Sigma)\to \R^3$ global unit normal smooth vector field {\em pointing inward}, $\bar N=-\bar \nu$. Thus, $\nu=\bar\nu \circ \vec{f}$ and $N=\bar N\circ \vec{f}$ will be the outward-pointing  and inward-pointing global unit normal smooth vector field of $\Sigma$, respectively. Therefore, $\Sigma$ is orientable.
In fact, the theorem of Jordan-Brouwer and relative consequences are in general true for every closed, connected, embedded smooth surface $\bar \Sigma$ of $\R^3$. The surface $\bar \Sigma$ is orientable, since {\em an orientation of $\bar \Sigma$, which will be considered from now on, is determined by the charts $(\bar U,\bar \varphi=(x^1,x^2))$ of $\bar \Sigma$ for which
\begin{equation}
\bar N_p\,=\,\frac{\partial_{x^{1}}\times \partial_{x^{2}}}{|\partial_{x^{1}}\times \partial_{x^{2}}|}(p)
\,=\frac{\partial_{x^{1}}\bar\psi\times \partial_{x^{2}}\bar\psi}{\big[\,|\partial_{x^{1}}\bar\psi|^2\,|\partial_{x^{2}}\bar\psi|^2 \,-\,(\partial_{x^{1}}\bar{\psi}\cdot \partial_{x^{2}}\bar\psi)^2\,\big]^{1/2}}\,(\bar\varphi(p))\label{feqnormal}
\end{equation}
for every $p\in \bar U$, where $\bar\psi=\bar\varphi^{-1}$ and $\bar N:\bar\Sigma\to \R^3$ is global unit normal smooth vector field pointing inward,} by noting that the Jacobian determinant of the transition maps between of two charts of $\bar\Sigma$, which satisfy the identity \eqref{feqnormal}, is positive.
In the case $\bar \Sigma=\vec{f}(\Sigma)$, then we obtain the existence of an atlas $\mathcal{A}=\{\big(U_i,\varphi_i=(x^1,x^2)\big)\}_{i\in I}$ of $\Sigma$ such that both
\begin{equation}
N_p=\bar N_{\vec{f}(p)}\,=\,\frac{\partial_{x^{1}} ( \vec{f}\circ \varphi_i^{-1}) \times \partial_{x^{2}} ( \vec{f}\circ \varphi_i^{-1})}{|\partial_{x^{1}}( \vec{f}\circ \varphi_i^{-1})\times \partial_{x^{2}} ( \vec{f}\circ \varphi_i^{-1})|}\,(\varphi_i(p))\,=\,\frac{\partial_{x^{1}}  \vec{f} \times \partial_{x^{2}}\vec{f}}{|\partial_{x^{1}} \vec{f} \times \partial_{x^{2}} \vec{f}|}\,(p),\label{feqnormal1}
\end{equation}
for all $p\in U_i$ and every $i\in I$, and the Jacobian determinants of all its transition maps are positive, hence $\Sigma$ is orientable and its orientation is that one given by the atlas $\mathcal{A}$.
\smallskip

Let $\vec{f}: \Sigma\to \R^3$ be a smooth immersion of a connected, $2$-dimensional, smooth manifold $\Sigma$. As in the case of sets, we may define the second fundamental form and the mean curvature also in this case. However, we prefer to keep the notation from differential geometry as it is standard in this context.   For a choice of the smooth  unit normal vector field $\nu$ around a point $p\in \Sigma$, the (scalar) second fundamental form at $p$ is the symmetric bilinear form on $T_p\Sigma$ defined as 
\begin{equation}\label{defsecfundform}
\mathrm{A}_p(v,w)= \overline{\nabla}_v \nu \cdot w= N\cdot \overline{\nabla}_v w,
\end{equation} 
for all $v, w \in T_p\Sigma$, where $N=-\nu$ and $\overline{\nabla}$ is the Levi-Civita connection of $(\R^3, g_{\R^3})$. 
Then we have the (scalar) mean curvature at $p$, $\mathrm{H}_p$, which is the trace of $\mathrm{A}_p$ with respect $g_p$ (that is $\mathrm{H}_p=\mathrm{A}_p(e_1,e_1)+\mathrm{A}_p(e_2,e_2)$, for any orthonormal basis $\{e_1,e_2\}$ of $T_p\Sigma$), the vector mean curvature at $p$, $\vec{\mathrm{H}}_p=\mathrm{H}_pN=-\mathrm{H}_p\nu$, the traceless part $\ringg{\rm{A}}_p$ of $\mathrm{A}_p$, given by $\ringg{\mathrm{A}}_p= \mathrm{A}_p - (\mathrm{H}_p/2)g_p$, and the Weingarten operator at $p$, which is the linear map $\mathrm{W}_p:T_p\Sigma\to T_p\Sigma$ defined through $g_p(\mathrm{W}_p (v), w)=\mathrm{A}_p (v,w)$ for all $v,w\in T_p\Sigma$.
The eigenvalues $k_1\leq k_2$ of $\mathrm{W}_p$ are the principal curvatures at $p$, and we have  the following identities: $|\mathrm{A}|^2=k_1^2+k_2^2$; $|\ringg{\mathrm{A}}|^2=(k_2-k_1)^2/2$; $\mathrm{H}=k_1+k_2$; $\mathrm{K}^{G}=k_1k_2$. Here, $\mathrm{K}^{G}$ is the Gaussian curvature, which is equal to the half of the scalar curvature of $(\Sigma,g)$.  
Notice that locally, we may always choose $\nu$ to be smooth.

The Riemann tensor, the Ricci tensor and the scalar curvature can be expressed via the second fundamental form as follows
\begin{align*}
R_{ijkl}&=\mathrm{A}_{ik}\mathrm{A}_{jl}-\mathrm{A}_{il}\mathrm{A}_{jk}\\
\mathrm{Ric}_{ij}&=g^{kl}R_{ikjl}=\mathrm{H}\mathrm{A}_{ij}-\mathrm{A}_{il}g^{lk}\mathrm{A}_{kj}\\
\mathrm{Sc}&=g^{ij}\mathrm{Ric}_{ij}=\mathrm{H}^2-|\mathrm{A}|^2 .
\end{align*}
A consequence of this link between the Riemann tensor and the second fundamental form and of the formula for the interchange of the covariant derivatives \eqref{eq:interchange-covariant} is the useful equality
\begin{equation}\label{ffeq21}
\nabla \Delta \mathrm{H}=\Delta \nabla \mathrm{H}+ \mathrm{A}*\mathrm{A}*\nabla \mathrm{H}.
\end{equation}

The symmetry properties of the covariant derivative of $\mathrm{A}$, given by the Codazzi equations $\nabla \mathrm{A}_{ijk}=\nabla \mathrm{A}_{jik}=\nabla \mathrm{A}_{kij}$, imply 
$\nabla\mathrm{H}=\div\mathrm{A}=2\,\div\ringg{\mathrm{A}}$,
which in turn yield the  inequalities
\begin{equation}\label{ffeq16}
|\mathrm{H}|^2\leq 2\,|\mathrm{A}|^2,\quad\quad\quad |\nabla\mathrm{H}|^2\leq 2\,|\nabla\mathrm{A}|^2\leq 12\,|\nabla\ringg{\mathrm{A}}|^2, \quad\quad\quad |\nabla^2\mathrm{H}|^2\leq 2\,|\nabla^{2}\mathrm{A}|^2\leq 12\,|\nabla^2\ringg{\mathrm{A}}|^2
\end{equation}
and the well-known Simons’ identity
\begin{equation}
\label{eq:simon}
\Delta  \mathrm{A}_{ij} = \nabla^2\mathrm{H}_{ij} + \mathrm{H} \mathrm{A}_{il} g^{ls} \mathrm{A}_{sj} - |\mathrm{A}|^2 \mathrm{A}_{ij}.
\end{equation}
 
\vspace{0.15cm}
The {\em Willmore energy} of a smooth immersion $\vec{f}: \Sigma\to \R^3$, for a generic connected, closed, smooth $2$-manifold $\Sigma$, is given by
\begin{equation}\label{Willenergy}
\mathcal{W}(\vec{f})=\frac{1}{4}\int_{\Sigma} \mathrm{H}^2\,d\sigma\,.
\end{equation}
The most important property of this functional is its invariance under conformal diffeomorphisms of $\R^3$, recalling that a smooth immersion $f: (M, g)\to (N, h)$ between two Riemannian manifolds $(M, g)$ and $(N, h)$ is said to be conformal if $f^* h=e^{2\lambda}g$ for some $\lambda\in C^{\infty}(M)$.
More precisely, $\mathcal{W}(\Phi \circ\vec{f})=\mathcal{W}(\vec{f})$, where $\Phi$ is any combination of a translation of $\R^3$ given by $x \mapsto x+x_0$ for a fixed $x_0\in \R^3$,  a dilation of $\R^3$ defined as $x \mapsto \alpha x$ for some $\alpha>0$,  and a spherical inversion $\R^3$ with center $p \notin \vec{f}(\Sigma)$ and radius $r>0$ described as $x \mapsto r^2 (x-p)/|x-p|^2$.
In the case that $\Phi$ is a spherical inversion with center $p\in \vec{f}(\Sigma)$, we have $\cW(\Phi \circ\vec{f})=\cW(\vec{f})-4\pi\,\sharp\vec{f}^{-1}(p)$, where $\sharp(\cdot)$ is the cardinality of $(\cdot)$,  \cite{BaKu}.
As a consequence, we get the Li--Yau inequality \cite{LiYa82}, that is $\cW(\vec{f}) \ge 4\pi\, \sharp \vec{f}^{-1}(p)$ for all $p\in  \vec{f}(\Sigma)$. In particular if $\vec{f}$ is not an embedding, then $\cW(\vec{f})\ge8\pi$.

The infimum of $\mathcal{W}(\vec{f})$ among all smooth immersions $\vec{f}: \Sigma\to \R^3$, where $\Sigma$ is a connected, closed, smooth $2$-manifold, belongs to $[4\pi,8 \pi)$ if all $2$-manifolds $\Sigma$ are orientable. 
More precisely, for each nonnegative integer $\mathfrak{g}$, setting
	\begin{align*}
	\beta_\mathfrak{g}:=\inf \Big\{ \cW(\vec{f})\,\,;\,&\, \vec{f}:\Sigma\to\R^3 \mbox{ is a smooth immersion with $\Sigma$ a $2$-dimensional, } \\
	&\,\mbox{connected, closed, orientable, smooth manifold of genus $\mathfrak{g}$}\Big\},
	\end{align*}
it turns out $4\pi\le\beta_\mathfrak{g}<8\pi$ and each infimum $\beta_\mathfrak{g}$ is attained by a smooth embedding $\vec{F}:\Sigma\to\R^3$, \cite{Wi65, La70, Sim93, BaKu}.
The equality $\beta_\mathfrak{g}=4\pi$ holds if and only if $\mathfrak{g}=0$ and $\vec{F}$ is a round sphere, that is, $\Sigma$ is topologically a sphere and $\vec{F}$ embeds $\Sigma$ as a round $2$-sphere of $\R^3$.
Similarly, $\beta_\mathfrak{g}\geq 2\pi^2$ for any $\mathfrak{g}\geq 1$ and the equality holds if and only if $\mathfrak{g}=1$ and $(\pi^{-1}\circ\vec{F})(\Sigma)$ is the Clifford torus $\SSS^1_{\frac{1}{\sqrt{2}}}\times \SSS^1_{\frac{1}{\sqrt{2}}}$, up to conformal diffeomorphisms of $\SSS^3$, \cite{MN14}, where $\SSS^n_r$ denotes the sphere of radius $r$ and center the origin of $\R^{n+1}$ and $\pi$ is a generic stereographic projection of $\SSS^3$. For completeness, we mention that there exists the limit of the $\beta_\mathfrak{g}$'s for $\mathfrak{g}\to +\infty$ and it holds that $\lim_{\mathfrak{g}\to +\infty}\beta_\mathfrak{g}=8\pi$, \cite{KLS10}.

\subsection{Embedded surfaces and  Riemann surfaces}\label{Riemsurf}

Given a closed connected embedded smooth surface $\Sigma \subset \R^{3}$, the inclusion $\vec{\iota}:\Sigma\to\R^3$ is a smooth immersion. Thus, the above is applicable, recovering in this way the classic theory of the embedded surfaces of $\R^3$. In particular, $\Sigma$ is a Riemannian surface where the metric $g$ is the (natural) one induced by the Euclidian metric, it is orientable with orientation fixed through \eqref{feqnormal}, and the second fundamental form $\mathrm{A}$, the mean curvature $\mathrm{H}$, the traceless part $\ringg{\mathrm{A}}$ of $\mathrm{A}$ can be globally defined in a smooth way on it. In the case that the surface is the boundary of a set $E$, i.e. $\Sigma = \pa E$, we use the notation $\mathrm{H}_E$, $\mathrm{A}_E$, etc., while if we are interested only on the surface, we write $\mathrm{H}_\Sigma, \mathrm{A}_\Sigma$ etc..

Since $\Sigma$ is a closed connected orientable smooth $2$-manifold, it is diffeomorphic either to a sphere or to a connected sum of tori, \cite{BenMan, massey1}. 
If it is diffeomorphic to a sphere, we say that it has genus $0$, while if it is diffeomorphic to the connected sum of $n$ tori, we say that it has genus $n$.
It follows then from Gauss-Bonnet theorem (see for instance \cite{petersen2}) that
\begin{equation}\label{Gauss--Bonnet Theorem}
\int_{\Sigma} \mathrm{K}^G_{\Sigma}\,d\mathcal{H}^2=2\pi\chi(\Sigma)=4\pi\big(1-\text{genus}(\Sigma)\big).
\end{equation}

Another important property for the surface $\Sigma$ is the existence of an atlas $\{\big(U_j, (u_j,v_j)\big)\}_{j\in J}$, compatible with the fixed orientation, such that all local coordinates $(u_j,v_j)$ are isothermal for the Riemannian metric $g$, i.e. $$g=e^{2\lambda_j}(du_j\otimes du_j+dv_j\otimes dv_j),$$ 
with $\lambda_j\in C^{\infty}(U_j)$. Moreover, the family $\{(U_j, u_j+i v_j)\}_{j\in J}$ defines a one-dimensional complex structure on $\Sigma$, since all its transition maps and their inverses are holomorphic, and the Riemann surface $S$ so obtained, that is the one-dimensional complex manifold determined by this complex structure, is said to be {\em induced by the Riemmanian oriented surface} $(\Sigma, g)$.
If $S$ and $S'$ are Riemann surfaces induced by the Riemmanian oriented surfaces $(\Sigma, g)$ and $(\Sigma', g')$ respectively, then $f:(\Sigma, g)\to (\Sigma', g')$ is a smooth orientation-preserving conformal diffeomorphism if and only if $f:S\to S'$ is biholomorhic, that is $f$ is holomorphic with its inverse.
This is still true if $(\Sigma, g)$, as well as $(\Sigma', g')$, is a generic $2$-dimensional oriented Riemannian smooth manifold, therefore, conformal Riemannian metrics determine the same complex structure in $2$-dimensional oriented smooth manifolds, where we recall that two Riemannian metrics $h,h'$ on a same smooth manifold $M$ are conformal if there exists $\lambda\in C^\infty(M)$ such that $h'=e^{2\lambda}h$. More precisely, in the case of $2$-dimensional oriented smooth manifolds, the concepts of structure complex and of conformal structure (that is a conformal equivalence class of Riemannian metrics) are equivalent. 

Above we introduced the notion of Riemann surface and that one of Riemann surface induced by a $2$-dimensional Riemmanian oriented smooth manifold because we want to apply a known corollary of the Uniformization Theorem, which provides a classification up to biholomorphic maps of all connected Riemann surfaces, to the Riemann surfaces induced by closed connected embedded smooth surfaces of $\R^{3}$ with genus $0$.
This corollary states that every closed connected Riemann surface of genus zero is biholomorphic equivalent to the Riemann sphere, that is $\SSS^2\subseteq\R^3$ endowed with the complex structure induced by the Riemmanian oriented surface $(\SSS^2, g_{\SSS^2}=\vec{\iota}^{\,\,*}g_{\R^3})$ whose orientation is given once again through \eqref{feqnormal}. Here, the expression ``biholomorphic equivalent'' means that there exists a biholomorphic map between the considered Riemann surfaces. We refer the reader to \cite{ImTan, Jost} for a detailed treatment of these topics.

Arguing as in the proof \cite[Proposition 3.1] {LSch} and using both the above cited corollary and \cite[Theorem 5.1]{Sch}, we obtain the following  useful  result.

\begin{proposition}
\label{prop:conformal}
For every $\delta_1 \in (0,8\pi)$, there exists a positive constant $C =C(\delta_1)$ with the following property. Let $\Sigma \subset \R^3$ 
be a closed connected embedded smooth surface of genus $0$ with $\H^2(\Sigma) = 4 \pi$ and 
$$\|\ringg{\mathrm{A}}_\Sigma\|_{L^2(\Sigma)}^2 < 8\pi - \delta_1.$$
The Riemmanian surface $\Sigma$ is oriented with orientation given through \eqref{feqnormal}.
Then there is a smooth conformal orientation-preserving diffeomorphism $f : \mathbb{S}^2 \to \Sigma$ with pull-back metric $g = f^* g_\Sigma = e^{2u} g_{\mathbb{S}^2}$ having
$$\|u\|_{L^\infty(\mathbb{S}^2)} \leq C. $$
\end{proposition}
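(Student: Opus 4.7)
The plan is to construct $f$ via the Uniformization Theorem and to control the conformal factor through the quantitative Willmore bound forced by the hypothesis, following the strategy of Li--Sacks \cite{LSch} and Schätzle \cite{Sch}.

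First, since $\Sigma$ is a closed connected embedded smooth surface of genus zero and oriented through \eqref{feqnormal}, the corollary of the Uniformization Theorem recalled just before the statement produces a smooth conformal orientation-preserving diffeomorphism $\tilde f : \SSS^2 \to \Sigma$, and the pull-back metric takes the form $\tilde f^{\,*} g_\Sigma = e^{2\tilde u}\,g_{\SSS^2}$ for some $\tilde u\in C^\infty(\SSS^2)$. Then I would rewrite the hypothesis as a sub-threshold Willmore bound: using the algebraic identity $|\ringg{\mathrm{A}}|^2 = \tfrac{1}{2}\mathrm{H}^2 - 2\mathrm{K}^G$ together with Gauss--Bonnet \eqref{Gauss--Bonnet Theorem} applied to the genus-zero surface $\Sigma$, one obtains
$$\int_\Sigma |\ringg{\mathrm{A}}_\Sigma|^2\, d\H^2 \,=\, 2\,\cW(\tilde f)\,-\,8\pi,$$
so the assumption translates into the quantitative Willmore gap $\cW(\tilde f)\,<\,8\pi - \delta_1/2$ below the Li--Yau threshold.

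The second, and decisive, step is to bound the conformal factor in $L^\infty$. Since the conformal group of $\SSS^2$ is non-compact, $\tilde u$ itself cannot in general be controlled; I would therefore precompose $\tilde f$ with a Möbius transformation $\Phi:\SSS^2\to\SSS^2$ chosen so that the new conformal factor $u$, defined via $f := \tilde f\circ \Phi$ and $f^* g_\Sigma = e^{2u}g_{\SSS^2}$, satisfies the balancing condition
$$\int_{\SSS^2} x\, e^{2u}\, d\sigma_{\SSS^2} \,=\, 0 \,\in\,\R^3,$$
whose existence is classical via a degree-theoretic argument based on the action of the Möbius group on $\SSS^2$. The factor $u$ then solves the Liouville--Gauss equation $-\Delta_{\SSS^2} u + 1 = (\mathrm{K}^G_\Sigma\circ f)\,e^{2u}$ coupled with the area normalization $\int_{\SSS^2} e^{2u}\,d\sigma_{\SSS^2} = \H^2(\Sigma) = 4\pi$. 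Feeding the sub-threshold bound $\cW(f)\le 8\pi - \delta_1/2$ and the balancing condition into the quantitative regularity theorem \cite[Theorem 5.1]{Sch} of Schätzle (exactly as in the proof of \cite[Proposition 3.1]{LSch}) yields the desired estimate $\|u\|_{L^\infty(\SSS^2)}\le C(\delta_1)$.

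The main technical obstacle is the final step, namely converting the Willmore gap together with the Möbius normalization into a uniform $L^\infty$ control on $u$. This is precisely what Schätzle's theorem delivers, through a Moser--Trudinger/Onofri-type argument applied to the Gauss equation and a compactness/bubbling analysis in which the strict inequality $\cW(f)<8\pi-\delta_1/2$ and the embedding hypothesis are both essential to rule out concentration of the conformal factor at a point.
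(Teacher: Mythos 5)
Your argument follows exactly the route the paper indicates (which is itself little more than a pointer to \cite[Proposition~3.1]{LSch} and \cite[Theorem~5.1]{Sch}): uniformize the genus-zero surface via the corollary of the Uniformization Theorem stated just above the proposition, convert the hypothesis $\|\ringg{\mathrm{A}}_\Sigma\|_{L^2}^2<8\pi-\delta_1$ into a sub-$8\pi$ Willmore gap through the pointwise identity $|\ringg{\mathrm{A}}|^2=\tfrac12\mathrm{H}^2-2\mathrm{K}^G$ and Gauss--Bonnet (your algebra $\int_\Sigma|\ringg{\mathrm{A}}|^2=2\cW-8\pi$ and the resulting bound $\cW<8\pi-\delta_1/2$ are correct), renormalize the conformal factor by a Möbius transformation so that a balancing condition holds, and invoke the quantitative estimate of Schätzle to obtain $\|u\|_{L^\infty(\SSS^2)}\le C(\delta_1)$. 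This is precisely the content of the cited \cite[Proposition~3.1]{LSch}, whose proof you are correctly reproducing in outline. One bibliographic slip: the reference \cite{LSch} is Lamm--Sch\"atzle, not ``Li--Sacks''; otherwise the proposal is sound and matches the paper's intended argument.
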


In the following we will also need the Michael-Simon inequality, \cite{MiSi}, that states the existence of constant $C>0$ such that 
\begin{equation}\label{MSinequality}
\|w\|_{L^{2}(\Sigma)} \leq C \int_{\Sigma} |\nabla w| + |\mathrm{H}_\Sigma w|\, d \H^2,
\end{equation}
for every $w \in C^1(\Sigma)$ and for all closed connected embedded smooth surfaces $\Sigma$ of $\R^3$.

\subsection{Weak immersions of the sphere $\SSS^2$, convergence results and spheres minimising the Canham-Helfrich energy}\label{Weakimmersionsec}

Let us consider the Riemmanian oriented surface $(\SSS^2, g_{\SSS^2})$ with orientation given through \eqref{feqnormal}.
The default metric on the sphere $\SSS^2$ will always be the canonical one $g_{\SSS^2}$, therefore, if  not mentioned otherwise, the quantities considered on the sphere $\SSS^2$ are all refereed to the standard metric $g_{\SSS^2}$.

A map $\vec{f}\in W^{2,2}(\SSS^2,\R^3)\cap W^{1,\infty}(\SSS^2,\R^3)$ is called a {\em weak immersion} if the following conditions are satisfied.
\begin{itemize}
\item[$(i)$] There exists $C\geq 1$ such that $$C^{-1}g_{\SSS^2}(p)(v,v)\leq g(p)(v,v)\leq C g_{\SSS^2}(p)(v,v),$$
for every $v\in T_p\SSS^2$ and almost every $p\in \SSS^2$, where $g$ is the $L^\infty$-metric on $\SSS^2$ associated with  $\vec{f}$ given almost everywhere by $g(\partial_{x^i},\partial_{x^j})=\partial_{x^i}\vec{f} \cdot\,\partial_{x^j}\vec{f}$ in local coordinates. Similarly to the smooth case, the metric $g$ induces a Radon measure $\sigma_g$ on $\SSS^2$, called area measure. 
\item[$(ii)$] The Gauss map $N$, or $N_{\vec{f}}$ , defined almost everywhere in any positive local chart of $\SSS^2$ through the formula \eqref{feqnormal1}, belongs to $W^{1,2}(\SSS^2,\R^3)$. Observe that $N\in W^{1,2}(\SSS^2,\R^3)$ if  
 $$\int_{\SSS^2}\!|dN|_g^2\, d\sigma_g<+\infty,$$
where $|dN|_g^2=g^{i k}g^{jl} (\partial_{x^{i}}N\cdot \partial_{x^{j}}\vec{f})\,(\partial_{x^{k}}N\cdot \partial_{x^{l}}\vec{f})$ in local coordinates.
\end{itemize}
We denote by $\mathcal{E}$ the class of the weak immersions $\vec{f}:\SSS^2\to \R^3$ of $\SSS^2$ into $\R^3$.

A map $\vec{f}\in W^{1,\infty}(\SSS^2,\R^3)$ is {\em (weakly) conformal} if $g$ defined as before can be written almost everywhere as $e^{2u}g_{\SSS^2}$ for some $u\in L^\infty(\SSS^2)$ called conformal factor. In this case, the equalities $\partial_{x^1}\vec{f}\cdot\partial_{x^2}\vec{f}=0$ and $|\partial_{x^1}\vec{f}|=|\partial_{x^2}\vec{f}|$ hold almost everywhere for every conformal local chart of $\SSS^2$. 
Finally, we recall that for every weak immersion $\vec{f}:\SSS^2\to \R^3$, there always exists a bilipschitz homeomorphism $\phi$ of $\SSS^2$ such that $\vec{f}\circ \phi$ belongs to $W^{2,2}(\SSS^2,\R^3)\cap W^{1,\infty}(\SSS^2,\R^3)$, $\vec{f}\circ \phi$ is weakly conformal and $N_{\vec{f}}\circ \phi=N_{\vec{f}\,\circ\, \phi}$, \cite{Hel, RivLec}.

Given a weak immersion $\vec{f}:\SSS^2\to \R^3$, one may  define the second fundamental form $\mathrm{A}$, the (scalar) mean curvature $\mathrm{H}$, the vector mean curvature $\vec{\mathrm{H}}$ and the Gaussian curvature $\mathrm{K}^G$ almost everywhere in the same way as  with smooth immersions.


\begin{proposition}(\cite[pp. 81-83]{RivLec})\label{conv1}
Let $\vec{f}_1, \vec{f}_2, \dots\in \mathcal{E}$ be a sequence of conformal weak immersions of the $2$-sphere $\SSS^2$ into $\R^3$.
We assume that there exist a radius $R$ and constant $C>1$ such that
\begin{equation*}
\vec{f}_k(\SSS^2)\subset B_R(0)\quad\quad\quad\quad \Vert u_k\Vert_{L^{\infty}(\SSS^2)}\leq C\quad\quad\quad\quad \int_{\SSS^2}  \mathrm{H}_{k}^2\,d\sigma_{k}\leq C
\end{equation*}
for all $k\in\N$, where the $u_k$'s, the $\sigma_k$'s and the $ \mathrm{H}_{k}$'s are the conformal factors, the area measures and the mean curvatures corresponding to the $\vec{f}_{k}$'s, respectively.\\
Then,  by possibly passing  to a sub-sequence, there exists a map $\vec{f}_{\infty}\in W^{2,2}(\SSS^2,\R^3)\cap W^{1,\infty}(\SSS^2,\R^3)$ such that it is weakly conformal and 
\begin{align}
\log(|\nabla  \vec{f}_{k}|)&\overset{\ast}{\rightharpoonup} \log(|\nabla  \vec{f}_{\infty}|) \quad\text{in}\quad  L^{\infty}(\SSS^2) \label{eq:Prop2.2-1}\\
\vec{f}_k&\rightharpoonup \vec{f}_\infty \quad\quad\quad\quad\text{in}\quad W^{2,2}(\SSS^2,\R^3) \label{eq:Prop2.2-2}.
\end{align}
\end{proposition}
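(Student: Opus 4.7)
The plan is to obtain the compactness by deriving uniform $W^{2,2}\cap W^{1,\infty}$ bounds on the $\vec{f}_k$, extracting subsequential limits via Banach--Alaoglu and Rellich--Kondrachov, and then identifying the limit as a weakly conformal immersion whose conformal factor captures the weak-$*$ limit.

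\smallskip

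\emph{Step 1: Uniform bounds.} I would first show that $(\vec{f}_k)$ is uniformly bounded in $W^{2,2}(\SSS^2,\R^3)\cap W^{1,\infty}(\SSS^2,\R^3)$. In any positive isothermal chart of $(\SSS^2, g_{\SSS^2})$, conformality yields $|\nabla\vec{f}_k|^2=2e^{2u_k}$ a.e.; combined with $\|u_k\|_{L^\infty}\leq C$ and $\vec{f}_k(\SSS^2)\subset B_R(0)$, this gives a uniform $W^{1,\infty}$-bound. For the second-order bound, I would exploit the fundamental identity for a conformal immersion with induced metric $g_k=e^{2u_k}g_{\SSS^2}$, namely
\begin{equation*}
\Delta_{g_{\SSS^2}}\vec{f}_k \,=\, e^{2u_k}\vec{\mathrm{H}}_k \qquad \text{a.e.\ on }\SSS^2,
\end{equation*}
which follows from $\Delta_{g_k}\vec{f}_k=\vec{\mathrm{H}}_k$ and the two-dimensional conformal rule $\Delta_{g_k}=e^{-2u_k}\Delta_{g_{\SSS^2}}$. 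Squaring and integrating against the round area measure $d\mathcal{H}^2=e^{-2u_k}d\sigma_k$ yields
\begin{equation*}
\int_{\SSS^2}|\Delta_{g_{\SSS^2}}\vec{f}_k|^2\,d\mathcal{H}^2 \,=\, \int_{\SSS^2}e^{2u_k}\mathrm{H}_k^2\,d\sigma_k \,\leq\, e^{2C}\!\int_{\SSS^2}\mathrm{H}_k^2\,d\sigma_k,
\end{equation*}
which is bounded by hypothesis. Standard elliptic regularity on the closed manifold $\SSS^2$, namely $\|\vec{f}\|_{W^{2,2}}\leq C(\|\Delta_{g_{\SSS^2}}\vec{f}\|_{L^2}+\|\vec{f}\|_{L^2})$, then upgrades this (together with the uniform $L^\infty$-bound on $\vec{f}_k$) to a uniform $W^{2,2}$-bound.

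\smallskip

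\emph{Step 2: Compactness and a.e.\ convergence.} Next I would apply Banach--Alaoglu to extract a subsequence with $\vec{f}_k\overset{*}{\rightharpoonup}\vec{f}_\infty$ in $W^{1,\infty}$ and $\vec{f}_k\rightharpoonup\vec{f}_\infty$ in $W^{2,2}$; the latter convergence is precisely \eqref{eq:Prop2.2-2}. By the Rellich--Kondrachov embedding $W^{2,2}(\SSS^2)\hookrightarrow W^{1,p}(\SSS^2)$, valid for every $p<\infty$, the convergence strengthens to strong in $W^{1,p}$, and passing to a further subsequence we obtain $\nabla\vec{f}_k\to\nabla\vec{f}_\infty$ a.e.\ on $\SSS^2$.

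\smallskip

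\emph{Step 3: Conformality of $\vec{f}_\infty$ and the $\log$-convergence.} In each positive isothermal chart the a.e.\ identities $\partial_{x^1}\vec{f}_k\cdot\partial_{x^2}\vec{f}_k=0$ and $|\partial_{x^1}\vec{f}_k|=|\partial_{x^2}\vec{f}_k|$ pass to $\vec{f}_\infty$ via the pointwise convergence of the derivatives, establishing weak conformality. The uniform lower bound $|\nabla\vec{f}_k|\geq \sqrt{2}\,e^{-C}$, inherited from $u_k\geq -C$, is preserved in the limit, so $\log|\nabla\vec{f}_\infty|\in L^\infty(\SSS^2)$. Finally, the a.e.\ convergence $\log|\nabla\vec{f}_k|\to\log|\nabla\vec{f}_\infty|$ together with the uniform bound $\bigl|\log|\nabla\vec{f}_k|\bigr|\leq C+\tfrac{1}{2}\log 2$ gives, by dominated convergence applied against arbitrary test functions in $L^1(\SSS^2)$, the weak-$*$ convergence \eqref{eq:Prop2.2-1}.

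\smallskip

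\emph{Main obstacle.} The technical core is the global $W^{2,2}$-estimate of Step~1: since $\vec{f}_k$ is only a weak immersion, the identity $\Delta_{g_{\SSS^2}}\vec{f}_k=e^{2u_k}\vec{\mathrm{H}}_k$ holds merely a.e.\ and must be carefully justified starting from the almost-everywhere calculus recalled in Subsection~\ref{Weakimmersionsec}, together with the fact that the hypothesized Willmore bound $\int\mathrm{H}_k^2\,d\sigma_k\leq C$ indeed controls the distributional Laplacian of $\vec{f}_k$ in $L^2$. Once this estimate is rigorously in place, the remainder of the argument reduces to essentially standard weak compactness manipulations.
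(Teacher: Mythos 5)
Your proof is correct and reproduces the approach of the cited reference [RivLec, pp.\ 81--83], which the paper invokes without giving a proof: the uniform $W^{2,2}\cap W^{1,\infty}$ bound via the identity $\Delta_{g_{\SSS^2}}\vec f_k = e^{2u_k}\vec{\mathrm H}_k$ (using $\Delta_{g_k}=e^{-2u_k}\Delta_{g_{\SSS^2}}$ in dimension two and $d\sigma_k=e^{2u_k}d\mathcal H^2$), extraction of a weak $W^{2,2}$-limit and strong $W^{1,p}$-limits by Rellich--Kondrachov, and passage of the a.e.\ conformality relations and the two-sided bound $\sqrt2\,e^{-C}\le|\nabla\vec f_k|\le\sqrt2\,e^{C}$ to the a.e.\ limit are exactly the ingredients of Rivi\`ere's argument. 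You correctly flag the only delicate point, namely the a.e.\ validity of the Laplacian identity for $W^{2,2}\cap W^{1,\infty}$ conformal weak immersions, which follows by differentiating the a.e.\ conformality relations in the weak sense (legitimate since $\nabla\vec f_k\in W^{1,2}\cap L^\infty$) to get $\Delta_{\rm flat}\vec f_k\perp\partial_i\vec f_k$ a.e.\ in isothermal charts.
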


We are interested in functionals involving the mean curvature, the area and the (algebraic) volume. Therefore we need to study their behavior under the  convergence given by \eqref{eq:Prop2.2-1} and \eqref{eq:Prop2.2-2}.

\begin{proposition}(\cite[Lemma 5.2]{RivLec} and \cite[Lemma 3.1]{MonSch})\label{resultconv1}
Let $\vec{f}_1, \vec{f}_2, \dots\in \mathcal{E}$ be a sequence of conformal weak immersions of the $2$-sphere $\SSS^2$ into $\R^3$, $\sigma_1,\sigma_2,\dots$ the corresponding area measures on $\SSS^2$, and $N_1, N_2, \dots $ the corresponding Gauss maps. We assume that
$$\sup_{k\in\N} \int_{\SSS^2} |d N_k|_{g_k}^2d\sigma_{g_k}<+\infty$$
and there exists a weakly conformal map $\vec{f}_{\infty}\in W^{2,2}(\SSS^2, \R^3)\cap W^{1,\infty}(\SSS^2, \R^3)$ such that  \eqref{eq:Prop2.2-1} and \eqref{eq:Prop2.2-2}.
Then, $\vec{f}_{\infty}\in \mathcal{E}$, i.e., it is weak immersion  and, by possibly passing to a  sub-sequence, it holds 
\begin{align}
\lim_{k\to+\infty}\int_{\SSS^2}d\sigma_{k} &= \int_{\SSS^2}d\sigma_\infty \label{feq32}\\
\lim_{k\to+\infty}\int_{\SSS^2} N_{k}\cdot \vec{f}_{k}\,d\sigma_{k} &= \int_{\SSS^2}  N_{\infty}\cdot \vec{f}_{\infty}\,d\sigma_\infty\label{feq33}\\
\lim_{k\to+\infty}\int_{\SSS^2} \mathrm{H}_{k}\,d\sigma_{k} &= \int_{\SSS^2}  \mathrm{H}_\infty\,d\sigma_\infty\,\label{feq34}\\
\int_{\SSS^2}\mathrm{H}^2_\infty\,d\sigma_\infty&\leq \liminf_{k\to+\infty}\int_{\SSS^2}  \mathrm{H}_{k}^2\,d\sigma_{k},\label{feq30}\\
\int_{\SSS^2}\vert dN_{\infty} \vert^2_{g_{\infty}}\,d\sigma_{\infty}&\leq \liminf_{k\to+\infty}\int_{\SSS^2}\vert dN_{k}\vert^2_{g_k} \,d\sigma_k.\label{feq31}.
\end{align}
\end{proposition}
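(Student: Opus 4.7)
The plan is to work in local conformal charts on $\SSS^2$ and exploit the two-dimensional Sobolev embeddings, which promote weak $W^{2,2}$-convergence to strong $W^{1,p}$-convergence for every $p<\infty$, and to uniform convergence. Passing to a subsequence, \eqref{eq:Prop2.2-2} yields $\vec{f}_k\to \vec{f}_\infty$ uniformly on $\SSS^2$ and $\nabla \vec{f}_k\to \nabla \vec{f}_\infty$ in $L^p$ and a.e. In any conformal chart one has $|\nabla \vec{f}_k|^2=2\,e^{2u_k}$, so $u_k\to u_\infty$ a.e.; together with the uniform $L^\infty$-bound \eqref{eq:Prop2.2-1} and dominated convergence, this gives strong $L^p$-convergence of $e^{\pm 2u_k}$ for every finite $p$. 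Since $N_k=e^{-2u_k}\,\partial_{x^1}\vec{f}_k\times \partial_{x^2}\vec{f}_k$ in such charts, one deduces that $N_k\to N_\infty$ strongly in $L^p$ for every $p<\infty$. The membership $\vec{f}_\infty\in\mathcal{E}$ then follows at once: the $L^\infty$-bound on $u_\infty$ gives condition (i), and the uniform bound on $\int|dN_k|^2_{g_k}d\sigma_{g_k}$ provides weak $W^{1,2}$-compactness whose limit is identified with $N_\infty$ through the pointwise convergence, giving condition (ii).

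The three equalities \eqref{feq32}--\eqref{feq34} now follow by rewriting each integral in conformal charts as an expression that is continuous under the convergences above. For the area, $d\sigma_k=e^{2u_k}\,dx=\tfrac12|\nabla \vec{f}_k|^2\,dx$, and strong $W^{1,2}$-convergence closes the argument. For the algebraic volume, one has
\[
\int_{\SSS^2}N_k\cdot \vec{f}_k\,d\sigma_k=\int(\partial_{x^1}\vec{f}_k\times \partial_{x^2}\vec{f}_k)\cdot \vec{f}_k\,dx,
\]
a trilinear expression in $(\vec{f}_k,\nabla \vec{f}_k)$ to which the uniform convergence of $\vec{f}_k$ and strong $L^2$-convergence of $\nabla\vec{f}_k$ apply. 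For the mean curvature, the Gauss formula in isothermal coordinates gives $\vec{\mathrm{H}}_k\,d\sigma_k=\Delta \vec{f}_k\,dx$ (with $\Delta$ the flat Laplacian), hence
\[
\int_{\SSS^2} \mathrm{H}_k\,d\sigma_k=\int \Delta \vec{f}_k\cdot N_k\,dx,
\]
and the weak $L^2$-convergence of $\Delta \vec{f}_k$ pairs with the strong $L^2$-convergence of $N_k$ to produce the limit.

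The lower-semicontinuity statements \eqref{feq30} and \eqref{feq31} are variants of the standard weak lower semicontinuity of the $L^2$-norm. For \eqref{feq30}, in conformal charts $\int \mathrm{H}_k^2\,d\sigma_k=\int|e^{-u_k}\Delta \vec{f}_k|^2\,dx$; since $e^{-u_k}\to e^{-u_\infty}$ strongly in every $L^p$ with $p<\infty$ and $\Delta \vec{f}_k\rightharpoonup \Delta \vec{f}_\infty$ weakly in $L^2$, the products converge weakly in $L^2$ to $e^{-u_\infty}\Delta\vec{f}_\infty$, and weak $L^2$-lower semicontinuity of the norm concludes. For \eqref{feq31}, the conformal invariance in two dimensions of the Dirichlet energy gives $\int|dN_k|_{g_k}^2 d\sigma_{g_k}=\int_{\SSS^2}|dN_k|^2_{g_{\SSS^2}}\,d\sigma_{\SSS^2}$, and the weak $W^{1,2}$-convergence of $N_k$ established above yields the inequality.

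The main obstacle is promoting the convergence of the Gauss maps $N_k$ to a mode strong enough (at least $L^2$) so that the pairing $\int \Delta \vec{f}_k\cdot N_k\,dx$ can pass to the limit against a merely weakly convergent factor $\Delta \vec{f}_k$. Since $N_k$ depends nonlinearly on $\nabla\vec{f}_k$ through the quotient $e^{-2u_k}\,\partial_{x^1}\vec{f}_k\times\partial_{x^2}\vec{f}_k$, this step relies crucially on the uniform $L^\infty$-bound on the conformal factor $u_k$, which both prevents the denominator from degenerating and provides an $L^\infty$-envelope enabling dominated convergence. Without the isothermal parametrization, an analogous identification would be substantially more delicate.
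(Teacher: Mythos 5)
The paper does not prove this proposition; it is quoted from \cite[Lemma 5.2]{RivLec} and \cite[Lemma 3.1]{MonSch} and no in-text argument follows the statement, so there is no proof in the paper to compare against. Your proof is nonetheless correct and follows the standard route in this literature: pass to isothermal coordinates, use the compact embeddings $W^{2,2}(\SSS^2)\hookrightarrow W^{1,p}(\SSS^2)$ ($p<\infty$) and $W^{2,2}(\SSS^2)\hookrightarrow C^{0}(\SSS^2)$ to upgrade the hypothesis \eqref{eq:Prop2.2-2} to strong $W^{1,p}$ and uniform convergence, exploit the uniform $L^{\infty}$ bound on the conformal factors (which follows from the weak-$*$ convergence in \eqref{eq:Prop2.2-1}) to control the nonlinear dependence of the Gauss maps on $\nabla\vec{f}_k$, and rewrite each functional via the two key conformal identities $\vec{\mathrm{H}}\,d\sigma=\Delta\vec{f}\,dx$ and the conformal invariance of the Dirichlet energy of the Gauss map. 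This correctly yields \eqref{feq32}--\eqref{feq34} by continuity and \eqref{feq30}--\eqref{feq31} by weak lower semicontinuity of the $L^2$-norm, and the identification of the weak $W^{1,2}$-limit of $N_k$ with $N_\infty$ via a.e.\ convergence gives $\vec{f}_\infty\in\mathcal{E}$.

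Two small points deserve to be made explicit if this is to be a polished argument. First, one cannot integrate over all of $\SSS^2$ in a single conformal chart; a partition of unity subordinate to a finite isothermal atlas (or a stereographic chart together with a removable-singularity argument at the pole) must be inserted, though this is routine. Second, in \eqref{feq30} the convergence $e^{-u_k}\Delta\vec{f}_k\rightharpoonup e^{-u_\infty}\Delta\vec{f}_\infty$ in $L^2$ is not simply ``strong times weak'': strong $L^p$-convergence for $p<\infty$ is not $L^\infty$-convergence. The correct justification, which you have implicitly, is that when testing against a fixed $\varphi\in L^2$ one splits $\int(e^{-u_k}-e^{-u_\infty})\Delta\vec{f}_k\,\varphi + \int e^{-u_\infty}(\Delta\vec{f}_k-\Delta\vec{f}_\infty)\varphi$, controls the first term by $\|(e^{-u_k}-e^{-u_\infty})\varphi\|_{L^2}\|\Delta\vec{f}_k\|_{L^2}$ together with the uniform $L^\infty$ bound, a.e.\ convergence, and dominated convergence, and uses plain weak $L^2$-convergence for the second. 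Stating this would close the only place where the proposal is slightly elliptical.
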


Another ingredient for the proof of Theorem \ref{t:LS} is a compactness result for weak immersions of the $2$-sphere.
In order to state it,  we need to introduce some definitions.
For any weak immersion $\vec{f}\in \mathcal{E}$, we define  {\em Canham-Helfrich energy} as
\begin{equation}\label{Can-Herenergy}
\mathcal{H}_{c_0}(\vec{f})=\frac{1}{4}\int_{\SSS^2}(\mathrm{H}-c_0)^2\,d\sigma_{g},
\end{equation}
where $c_0$ is a given real constant. Observe that if $c_0=0$, the Canham–Helfrich energy of $\vec{f}$ coincides with  Willmore energy $\mathcal{W}(\vec{f})$, which is defined by the formula \eqref{Willenergy}, exactly as with smooth immersions. Finally, we define by
\begin{equation}\label{ffeq9}
\mathcal{A}(\vec{f})=\int_{\SSS^2}d\sigma_{g} \quad\quad \text{and}\quad\quad 
\mathcal{V}(\vec{f})=-\frac{1}{3}\int_{\SSS^2}  N \cdot \vec{f}\,d\sigma_{g}
\end{equation}
the area and the (algebraic) volume of $\vec{f}$. In the case that $\vec{f}$ is a smooth embedding and $N$ is the inner unit normal, $\mathcal{V}(\vec{f})$ coincides with the enclosed volume, by the divergence theorem (see \cite{RupSch} for instance).
The previously mentioned  compactness result is the following.

\begin{theorem}\cite{MonSch}\label{CH energy's minimizing}
Let $c_0\in\R$ and let $A_0, V_0$ be positive real numbers which  satisfy the isoperimetric inequality $36\pi V_0^2\leq A_0^3$.
Consider the problem
\begin{equation}
\inf_{\vec{f}\in\mathcal{E}_{A_0, V_0}}\!\!\mathcal{H}_{c_0}(\vec{f}) \label{HerCahprob} \tag{$\star$}
\end{equation} 
where $\mathcal{E}_{A_0, V_0}=\{\vec{f}\in\mathcal{E}\,:\,\mathcal{A}(\vec{f})=A_0\,\,\text{and}\,\,\mathcal{V}(\vec{f})=V_0\}$.
If there exists a minimizing sequence $\{\vec{f}_k\}_{k\in \N}$ of the problem \eqref{HerCahprob} having
\begin{equation}\label{feq45}
\sup_{k\in \N}\mathcal{W}(\vec{f}_k)<8\pi,
\end{equation}
then the infimum is attained by a smooth embedding $\vec{f}:\SSS^2\to \R^3$.
\end{theorem}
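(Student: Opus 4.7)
The strategy is the direct method in $\mathcal{E}_{A_0,V_0}$, using the compactness Proposition \ref{conv1} and the semi-continuity/continuity Proposition \ref{resultconv1} as the two main ingredients. First I fix a minimizing sequence $\{\vec{f}_k\}_{k\in\N}\subset\mathcal{E}_{A_0,V_0}$ with $\mathcal{H}_{c_0}(\vec{f}_k)\to \inf_{\mathcal{E}_{A_0,V_0}}\mathcal{H}_{c_0}$, enjoying by hypothesis $\sup_k\mathcal{W}(\vec{f}_k)<8\pi-\delta$ for some $\delta>0$. By the reparametrization fact recalled in Section \ref{Weakimmersionsec} I compose each $\vec{f}_k$ with a bilipschitz self-homeomorphism $\phi_k$ of $\SSS^2$ so that the resulting map is weakly conformal, with associated metric $g_k=e^{2u_k}g_{\SSS^2}$; this does not affect any of the functionals $\mathcal{A}$, $\mathcal{V}$, $\mathcal{W}$, $\mathcal{H}_{c_0}$. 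A translation, which also preserves these functionals, puts the origin inside each $\vec{f}_k(\SSS^2)$.

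The main obstacle is securing the uniform bounds required by Proposition \ref{conv1}, namely $\|u_k\|_{L^\infty(\SSS^2)}\leq C$ and $\vec{f}_k(\SSS^2)\subset B_R(0)$. The diameter bound follows from Simon's estimate $\operatorname{diam}\vec{f}_k(\SSS^2)\leq C\,\mathcal{W}(\vec{f}_k)^{1/2}\mathcal{A}(\vec{f}_k)^{1/2}$ combined with $\mathcal{A}(\vec{f}_k)=A_0$ and the Willmore bound. The $L^\infty$ bound on $u_k$ is the genuinely delicate point: the strict inequality $\mathcal{W}(\vec{f}_k)<8\pi$ prevents the concentration of Gaussian curvature at a point that would otherwise produce a bubble carrying a full $8\pi$ of Willmore energy. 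After fixing the residual M\"obius ambiguity by a further reparametrization by a M\"obius transformation of $\SSS^2$ (chosen, say, to pin the conformal barycenter of the induced metric at a fixed point of $\SSS^2$), the Riviere--H\'elein moving-frame and Hodge-decomposition estimates then yield $\|u_k\|_{L^\infty(\SSS^2)}\leq C$.

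With these bounds, Proposition \ref{conv1} provides (along a subsequence) a weakly conformal limit $\vec{f}_\infty\in W^{2,2}(\SSS^2,\R^3)\cap W^{1,\infty}(\SSS^2,\R^3)$ with $\log|\nabla\vec{f}_k|\overset{\ast}{\rightharpoonup}\log|\nabla\vec{f}_\infty|$ in $L^\infty(\SSS^2)$ and $\vec{f}_k\rightharpoonup\vec{f}_\infty$ in $W^{2,2}$. Proposition \ref{resultconv1} then upgrades $\vec{f}_\infty$ to an element of $\mathcal{E}$, gives $\mathcal{A}(\vec{f}_k)\to\mathcal{A}(\vec{f}_\infty)=A_0$ and $\mathcal{V}(\vec{f}_k)\to\mathcal{V}(\vec{f}_\infty)=V_0$ (so $\vec{f}_\infty\in\mathcal{E}_{A_0,V_0}$), together with $\int_{\SSS^2}\mathrm{H}_k\,d\sigma_k\to\int_{\SSS^2}\mathrm{H}_\infty\,d\sigma_\infty$ and the lower semi-continuity $\int_{\SSS^2}\mathrm{H}_\infty^2\,d\sigma_\infty\leq\liminf_k\int_{\SSS^2}\mathrm{H}_k^2\,d\sigma_k$. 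Since
\begin{equation*}
4\,\mathcal{H}_{c_0}(\vec{f})=\int_{\SSS^2}\mathrm{H}^2\,d\sigma-2c_0\int_{\SSS^2}\mathrm{H}\,d\sigma+c_0^2\,\mathcal{A}(\vec{f}),
\end{equation*}
combining these three facts yields $\mathcal{H}_{c_0}(\vec{f}_\infty)\leq\liminf_k\mathcal{H}_{c_0}(\vec{f}_k)$, so $\vec{f}_\infty$ attains the infimum.

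It remains to prove the minimizer is a smooth embedding. By Willmore lower semi-continuity $\mathcal{W}(\vec{f}_\infty)<8\pi$, and the Li--Yau inequality forces $\vec{f}_\infty$ to have no point of multiplicity $\geq 2$, hence to be an embedding. Finally $\vec{f}_\infty$ satisfies weakly the fourth-order Euler--Lagrange system associated with $\mathcal{H}_{c_0}$ under the constraints $\mathcal{A}=A_0$ and $\mathcal{V}=V_0$ (the Lagrange multipliers being well-defined since the constraint set is non-degenerate at $\vec{f}_\infty$, the only degenerate case being the round sphere, which is itself smooth and embedded). In the conformal gauge this system falls within the scope of the $\varepsilon$-regularity theory for Willmore-type immersions developed by Riviere and adapted to the Canham--Helfrich setting in \cite{MonSch}, and Morrey-type bootstrap yields $\vec{f}_\infty\in C^\infty(\SSS^2,\R^3)$, completing the proof.
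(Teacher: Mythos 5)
Your overall strategy (direct method in $\mathcal{E}_{A_0,V_0}$ combined with Propositions \ref{conv1} and \ref{resultconv1}) is the same as the paper's, and the lower-semicontinuity computation for $\mathcal{H}_{c_0}$ and the use of Li--Yau to get an embedding are correct. However, there is a genuine gap at the crucial compactness step.

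You assert that after reparametrizing by a M\"obius transformation of $\SSS^2$ the conformal factors satisfy a \emph{global} bound $\|u_k\|_{L^\infty(\SSS^2)}\leq C$, justified by the heuristic that $\mathcal{W}(\vec{f}_k)<8\pi$ "prevents concentration of Gaussian curvature." That is not what the available estimates give. Lemma 4.1 of \cite{MoRibub} (and the Rivi\`ere--H\'elein theory you allude to) only yields, after M\"obius normalization, a uniform $L^\infty$ bound on $\lambda_k$ on compact subsets of $\SSS^2\setminus\{a_1,\dots,a_N\}$, where $\{a_1,\dots,a_N\}$ is a finite set of concentration points; the $<8\pi$ hypothesis by itself does not rule out such points. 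Consequently the convergence of Propositions \ref{conv1} and \ref{resultconv1} is available only locally on $\SSS^2\setminus\{a_1,\dots,a_N\}$, and one must still (a) show that the limit $\vec{F}_\infty$ extends across the $a_j$ to a genuine weak immersion of $\SSS^2$, using \cite[Lemma~6.2]{RivLec} and \cite[Section~4.2.3]{KeMoRi}: each $a_j$ carries a multiplicity $n_j\geq1$, and the inequality $4\pi n_j\leq \mathcal{W}(\vec{F}_\infty)\leq \liminf_k\mathcal{W}(\vec{F}_k)<8\pi$ forces $n_j=1$, so no branch points occur; and (b) show that no area escapes at the $a_j$, which is needed to conclude $\mathcal{A}(\vec{F}_\infty)=A_0$ and $\mathcal{V}(\vec{F}_\infty)=V_0$. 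Point (b) is a contradiction argument via \cite[Lemma~4.4]{KeMoRi}: if area concentrated at some $a_{j_0}$, at least $4\pi$ of Willmore energy would concentrate there, and together with $\mathcal{W}(\vec{F}_\infty)\geq 4\pi$ one would exceed the $8\pi$ budget. This is precisely where the hypothesis $\sup_k\mathcal{W}(\vec{f}_k)<8\pi$ actually enters; it does not simply convert into a global $L^\infty$ bound on the conformal factors. Your "conformal barycenter" normalization is a reasonable way to fix the M\"obius gauge, but it does not by itself remove the concentration-point analysis.

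A smaller remark: the claim that the only degeneracy of the constraints $(\mathcal{A},\mathcal{V})$ is at the round sphere is asserted without justification; the paper sidesteps this by directly following the regularity proof of \cite[Theorem~4.3]{MonSch}. Once the compactness and no-loss issues above are repaired, your Li--Yau argument for injectivity and the regularity bootstrap are in line with the paper's.
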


One may obtain Theorem  \ref{CH energy's minimizing}  from \cite[Theorem 1.7]{MonSch} since the assumption \eqref{feq45} on the  low Willmore energy prevents the bubbling phenomenon and the presence of branch points. However, for the reader's convenience  we give here a more  direct argument, but using the results from \cite{MoRibub, MonSch,RivLec}, that leads to Theorem \ref{CH energy's minimizing}.
\begin{proof}
First of all, note that without loss of generality, we may assume that each $\vec{f}_k$ is also weakly conformal (otherwise, one may replace $\vec{f}_k$ with $\vec{f}_k\circ \phi_k$ for an appropriate bilipschitz homeomorphism $\phi_k$ of $\SSS^2$, see above). 
Then, for every $k\in \N$, we have by the  Gauss--Bonnet theorem for conformal weak immersion that 
\begin{equation}\label{ffeq3}
\int_{\SSS^2}\vert d N_{k}\vert_{g_{k}}^2 \,d\sigma_{k}\,\leq \,4\mathcal{W}(\vec{f}_k)\,\leq \,32\pi.
\end{equation}
We note that the Gauss--Bonnet theorem for conformal weak immersions is a  consequence of the extension
of Liouville's equation to the conformal weak immersions, \cite[pp. 96]{RivLec}. Moreover, arguing as in  \cite[Lemma 1.1]{Sim93} we have the following inequalities 
\begin{equation}\label{ffeq2}
\sqrt{A_0/(8\pi)}\,\leq\,\sqrt{\mathcal{A}(\vec{f}_k)/\mathcal{W}(\vec{f}_k)}\,\leq\,\, 
\text{diam}(\vec{f}_k(\SSS^2))\,\leq\,C\sqrt{\mathcal{A}(\vec{f}_k)\,\mathcal{W}(\vec{f}_k)}\,\leq\,C\sqrt{8\pi A_0}\,,
\end{equation}
for some constant $C>0$ independent of $k$.  Indeed for every $\vec{f}\in \mathcal{E}$, the push forward measure $\mu=\vec{f}_{*}(\sigma_g)$ defines a $2$-dimensional integral varifold in $\R^3$, given by the pair $(\vec{f}(\SSS^2), \sharp(\vec{f}^{-1}(x)))$, with square integrable generalized mean curvature $\mathrm{H}_{\mu}$, defined almost everywhere as 
$$\mathrm{H}_{\mu}(x)= \frac{1}{\sharp(\vec{f}^{-1}(x))}\sum_{y\in \vec{f}^{-1}(x)}\!\!\mathrm{H}_{\vec{f}}\,(y)\quad \quad\text{if} \,\,\,\sharp(\vec{f}^{-1}(x))>0$$
and $0$ otherwise. Another consequence of this fact is 
\begin{equation}\label{ffeq5}
\mathcal{W}(\vec{f})\geq 4\pi \lim_{\rho\to 0^+} \frac{\mu(B_{\rho}(x))}{\pi \rho^2}
\end{equation}
for every $x \in \vec{f}(\SSS^2)$, which in turn yields $\mathcal{W}(\vec{f})\geq 4\pi$. We will refer to the above limit as density of $\vec{f}$ at $x$. (See \cite[Section 2.2]{KuLi} for this last part.)

Applying then \cite[Lemma 4.1]{MoRibub} with a diagonal argument, we obtain, by possibly passing to a sub-sequence, that  for every $k$ there exists a smooth conformal orientation-preserving diffeomorphism $\phi_k$ of $\SSS^2$ such
that, for the reparametrized immersion  $\vec{F}_k=\vec{f}_k\circ \phi_k\in \mathcal{E}$  and for the new conformal factor $\lambda_k=\frac{1}{2}\log\big(\frac{1}{2}|\nabla \vec{F}_k|^2 \big)$ the following holds:
there exists a finite set of points $\{a_1, \dots, a_N\}\subset \SSS^2$ such that for any compact subset $K$ of $\SSS^2\setminus \{a_1, \dots, a_N\}$ there exists a constant $C_K$ depending on the compact $K$ and on the bounds of the areas and energies of the $\vec{f}_k$'s such that $\Vert \lambda_k\Vert_{L^{\infty}(K)}\leq~C_K.$
Moreover, since the diameters of the images of our immersions are bounded uniformly, see \eqref{ffeq2}, then by possibly  translating the sets we may assume that the $\vec{F}_k(\SSS^2)$'s are contained in the ball $B_R(0)$.
Then, in any compact subset of $\SSS^2\setminus \{a_1, \dots, a_N\}$ the assumptions of Proposition \ref{conv1} are satisfied, therefore using the  argument from \cite[pp. 81-83]{RivLec}   together with a standard  diagonal argument we deduce that  there exists a map $\vec{F}_\infty:\SSS^2\setminus \{a_1, \dots, a_N\}\to\R^3$ such that $\vec{F}_\infty\in W^{2,2}_{loc}(\SSS^2\setminus\{a_1, \dots, a_N\},\R^3)\cap W^{1,\infty}_{loc}(\SSS^2\setminus\{a_1, \dots, a_N\},\R^3)$, $\vec{F}_\infty$ is weakly conformal,
\begin{align*}
\log(|\nabla  \vec{F}_k|)&\overset{\ast}{\rightharpoonup} \log(|\nabla  \vec{F}_\infty|) \quad\text{in}\quad  L^{\infty}_{loc}(\SSS^2\setminus \{a_1, \dots, a_N\})\\
\vec{F}_k&\rightharpoonup \vec{F}_\infty \quad\quad\quad\quad\,\,\,\text{in}\quad W^{2,2}_{loc}(\SSS^2\setminus\{a_1, \dots, a_N\},\R^3). 
\end{align*}
Moreover, the equalities and inequalities of Proposition \ref{resultconv1}, i.e. \eqref{feq32}--\eqref{feq31}, are true replacing $\SSS^2$ with $\SSS^2\setminus \cup_{j=1}^N B_{\varepsilon}(a_j)$, for any $\varepsilon>0$.
Joining all this information with the fact that the $\vec{F}_k$'s have the same area and are contained in the same ball, one may  also deduce that $\vec{F}_\infty$ extends to a (weakly conformal) map in $W^{1,2}(\SSS^2,\R^3)$,
as well as $N_{\infty}$ and $e^{2\lambda_\infty}\mathrm{H}^2_{\infty}$ extend to a map in $W^{1,2}(\SSS^2,\R^3)$ and in $L^{2}(\SSS^2)$ respectively, 
and that 
$$\vec{F}_k\overset{\ast}{\rightharpoonup} \vec{F}_\infty \quad\text{in}\quad  L^{\infty}(\SSS^2)\quad\quad\text{and} \quad \quad \vec{F}_k\rightharpoonup \vec{F}_\infty\quad\text{in}\quad W^{1,2}(\SSS^2,\R^3),$$
see \cite[pp. 89-90]{RivLec}. From \cite[Lemma 6.2]{RivLec} and \cite[Section 4.2.3]{KeMoRi} it follows then that $\vec{F}_\infty$ belongs to $W^{1,\infty}(\SSS^2,\R^3)$, which in turn implies that $\vec{F}_\infty\in W^{2,2}(\SSS^2,\R^3)$, and that for each $a_j$ there exists a positive integer $n_j$ such that the singularity can be removed if $n_j=1$, 
and the density of $\vec{F}_\infty$ at $\vec{F}_\infty(a_i)$ is always bigger or equal to $n_j$. Therefore, being 
 $$4\pi n_j\leq \mathcal{W}(\vec{F}_\infty)\leq\liminf_{k\to+\infty}\mathcal{W}(\vec{F}_k)<8\pi,$$ 
one obtains that $\vec{F}_\infty\in \mathcal{E}$.

It remains to show that
\begin{equation}\label{ffeq4}
\mathcal{H}_{c_0}(\vec{F}_\infty)\leq \inf_{ \vec{f}\in\mathcal{E}_{A_0, V_0}}\mathcal{H}_{c_0}(\vec{f})\,\quad\quad \quad\quad \mathcal{A}(\vec{F}_\infty)=A_0\quad\quad\quad \quad \mathcal{V}(\vec{F}_\infty)=V_0
\end{equation}
in order to deduce that  $\vec{F}_\infty$ is a minimizer of the problem \eqref{HerCahprob}. In particular, $\vec{F}_\infty$ then  satisfies Euler-Lagrange equation  associated with the Canham-Helfrich energy in a weak sense and, as a consequence, it is smooth by following the proof of \cite[Theorem 4.3]{MonSch}.

Notice that in order to have \eqref{ffeq4}, it is enough to show 
\[
\lim_{\varepsilon\to 0^+}\liminf_{k\to+\infty} \mathcal{A}\big(\vec{F}_k\,|_{B_{\varepsilon}(a_j)}\,\big)\to 0,
\]
for each $a_j$. Again this is a consequence of the fact that $\sup_{k\in\N}\mathcal{W}(\vec{F}_k)<8\pi$. Indeed,  we argue by contradiction and assume  there exists $j_0\in\{1,\dots,N\}$ for which 
$\lim \limits_{\varepsilon\to 0^+}\liminf \limits_{k\to+\infty}\mathcal{A}\big(\vec{F}_k\,|_{B_{\varepsilon}(a_{j_0})}\big)>0$. Then by \cite[Lemma 4.4]{KeMoRi} $\lim \limits_{\varepsilon\to 0^+}\liminf\limits_{k\to+\infty}\mathcal{W}\big(\vec{F}_k\,|_{B_{\varepsilon}(a_{j_0})}\big)\geq 4\pi$. However, this is impossible as 
\begin{align*}
8\pi>
\lim_{\varepsilon\to 0^+}\liminf_{k\to+\infty}\Big(\mathcal{W}\big(\vec{F}_k\,|_{\cup_{j=1}^N B_{\varepsilon}(a_{j})}\,\big)+\mathcal{W}\big(\vec{F}_k\,|_{\SSS^2\setminus\cup_{j=1}^N B_{\varepsilon}(a_{j})}\,\big)\Big)\geq 4\pi +\mathcal{W}(\vec{F}_\infty)\geq 8\pi\,.
\end{align*}

Finally, since  $\vec{F}_\infty$ is a smooth conformal map, it is, in particular, a smooth immersion. The fact that $\vec{F}_\infty$ is injective, hence a smooth embedding, follows from \eqref{ffeq5} since 
the density of $\vec{F}_\infty$ at $x$, for any $x\in\vec{F}_\infty(\SSS^2)$, coincides with $\sharp(\vec{F}_{\infty}^{-1}(x))$, and $\mathcal{W}(\vec{F}_\infty)<8\pi$.
\end{proof}

\section{Consequences of the non-sharp quantitative Alexandrov theorem \cite{JN}}


Let us begin by restating \cite[Theorem 1.2]{JN} in a slightly different way. 
\begin{theorem}
\label{thm:JN}
Let $E \subset \R^3$ be $C^2$-regular set such that  $P(E) \leq C_0$ and $|E| = |B_1|$. There are constants $q \in (0,1], C>1$ and $\tau >0$, depending only on $C_0$, such that if $\|\mathrm{H}_E-\mathrm{\bar H}_E\big\|_{L^2(\partial E)}\leq \tau $ then $C^{-1} \leq \mathrm{\bar H}_E \leq C$ and there are points $x_1, \dots, x_N$ with $|x_i - x_j|\geq 2 \rho$ for $i \neq j$, where $\rho = 2 \mathrm{\bar H}^{-1}$, such that  for $F = \cup_{i=1}^N B_\rho(x_i)$ it holds 
\[
\sup_{x\in E \Delta F} |d_{F}| + \big|P(E) - 4 \pi N^{\frac13} \big| \leq C \big\|\mathrm{H}_E-\mathrm{\bar H}_E\big\|_{L^2(\partial E)}^q.
\]
\end{theorem}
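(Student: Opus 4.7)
The statement is a mild reformulation of \cite[Theorem 1.2]{JN}, and my plan is to invoke that result and then rephrase its conclusion in terms of $\rho = 2/\mathrm{\bar H}_E$. First, I would apply \cite[Theorem 1.2]{JN} to obtain an integer $N\in\N$, centers $\tilde x_1,\dots,\tilde x_N\in\R^3$ pairwise separated by at least $2\tilde\rho$, and the radius $\tilde\rho = N^{-1/3}$ (dictated by the volume constraint $N\cdot(4\pi/3)\tilde\rho^3 = |B_1|$), such that $\tilde F = \bigcup_i B_{\tilde\rho}(\tilde x_i)$ satisfies
\[
\sup_{x\in E\Delta\tilde F} d_{\tilde F}(x) + \bigl|P(E) - 4\pi N^{1/3}\bigr| \leq C\,\|\mathrm{H}_E - \mathrm{\bar H}_E\|_{L^2(\partial E)}^q
\]
for some $q\in(0,1]$ and $C = C(C_0)$. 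The perimeter bound $P(E)\leq C_0$ immediately forces $N\leq C(C_0)$, so in particular $\tilde\rho$ is bounded from below.

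Next, to establish the bounds $C^{-1} \leq \mathrm{\bar H}_E \leq C$, I would exploit the tangential divergence identity applied to the radial field $X(x) = x-x_0$: since $\div^T X = 2$, one has $2P(E) = \int_{\partial E}\mathrm{H}_E(x-x_0)\cdot\nu_E\,d\mathcal{H}^2$ and $\int_{\partial E}(x-x_0)\cdot\nu_E\,d\mathcal{H}^2 = 3|E| = 4\pi$. Splitting $\mathrm{H}_E = \mathrm{\bar H}_E + (\mathrm{H}_E - \mathrm{\bar H}_E)$ then gives
\[
\mathrm{\bar H}_E = \frac{P(E)}{2\pi} - \frac{1}{4\pi}\int_{\partial E}(\mathrm{H}_E - \mathrm{\bar H}_E)(x-x_0)\cdot\nu_E\,d\mathcal{H}^2.
\]
Since the $\tilde x_i$ may be arbitrarily far apart, a naive Cauchy--Schwarz bound on the error term would cost a factor of $\mathrm{diam}(E)$; I would therefore apply the identity locally on each connected component $E_j$ of $E$ with $x_0 = \tilde x_{i(j)}$, the center of the unique approximating ball containing $E_j$ (well-defined by the Hausdorff estimate from the first step, provided $\tau$ is sufficiently small). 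On such a component, $|x - \tilde x_{i(j)}| \leq \tilde\rho + o(1)$ for all $x\in\partial E_j$, and summing Cauchy--Schwarz contributions over $j$ controls the total error by $C\|\mathrm{H}_E - \mathrm{\bar H}_E\|_{L^2(\partial E)}$. Combined with $P(E)\approx 4\pi N^{1/3}$, this yields $\mathrm{\bar H}_E\approx 2N^{1/3}$, and hence the two-sided bound.

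Finally, setting $\rho = 2/\mathrm{\bar H}_E$, the previous estimate gives $|\rho - \tilde\rho| \leq C\|\mathrm{H}_E - \mathrm{\bar H}_E\|_{L^2(\partial E)}^{q'}$; replacing $\tilde\rho$ by $\rho$ in the family $F = \bigcup_i B_\rho(\tilde x_i)$ perturbs the Hausdorff and perimeter estimates by at most $CN|\rho - \tilde\rho|$, which is absorbed into the right-hand side at the cost of possibly decreasing the exponent $q$, and slight shifts of the centers ensure the strict separation $|x_i - x_j| \geq 2\rho$. The main technical obstacle in this plan is the localization step in the Minkowski identity: without it, one would lose a diameter factor that is not controlled by the hypotheses alone when $N\geq 2$ and the approximating balls are widely separated.
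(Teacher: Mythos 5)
Your proposal reaches the right conclusion but takes a detour that is both longer than the paper's route and rests on a misreading of the cited result. In the paper, \cite[Theorem 1.2]{JN} already delivers the bound $C^{-1}\leq\mathrm{\bar H}_E\leq C$ \emph{and} a Hausdorff approximation of $E$ by a union of balls of radius $\rho = 2\,\mathrm{\bar H}_E^{-1}$ (not $N^{-1/3}$), together with a perimeter estimate of the form $|P(E)-N\,4\pi\rho^2|\leq C\|\mathrm{H}_E-\mathrm{\bar H}_E\|_{L^2}^q$. The only thing the paper adds is the elementary observation that the volume constraint $|E|=|B_1|$, compared against $|F|=N\tfrac{4\pi}{3}\rho^3$ via the Hausdorff bound, forces $|N-\rho^{-3}|\leq C\|\mathrm{H}_E-\mathrm{\bar H}_E\|_{L^2}^q$; substituting this into the perimeter estimate turns $N\,4\pi\rho^2$ into $4\pi N^{1/3}$ up to an error of the same order, and the proof is over. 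No Minkowski identity is invoked.

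Your plan, by contrast, (i) assumes \cite[Theorem 1.2]{JN} already outputs the radius $N^{-1/3}$, which it does not — the radius in that theorem is tied to the average mean curvature and carries no exact volume normalization — and (ii) re-derives the two-sided bound on $\mathrm{\bar H}_E$ from scratch, via a componentwise Minkowski identity, even though this bound is part of the conclusion of the result you are citing. The Minkowski argument itself is sound once the Hausdorff estimate is in hand, and your localization device (choosing $x_0=\tilde x_{i(j)}$ on each component, then summing Cauchy--Schwarz and paying only a factor $\sqrt N\leq C(C_0)$) does correctly handle the diameter problem you identify. But as written there is a subtle ordering issue: the Hausdorff estimate you lean on comes with radius $2\,\mathrm{\bar H}_E^{-1}$, so using it to bound $|x-\tilde x_{i(j)}|$ presupposes the very control on $\mathrm{\bar H}_E$ you are trying to establish. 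This is resolved simply by accepting the $\mathrm{\bar H}_E$ bound from \cite[Theorem 1.2]{JN} directly, at which point your Minkowski computation becomes a (nice, but redundant) cross-check rather than a necessary step. The final radius-shifting paragraph is fine in spirit, though the paper phrases it as the displayed estimate on $|N-\rho^{-3}|$ and avoids perturbing the centers altogether.
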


Indeed, Theorem \ref{thm:JN} follows from \cite[Theorem 1.2]{JN}, since the volume constraint $|E|=|B_1|$ implies 
\begin{equation}\label{ffeq1}
|N- \rho^{-3}|\leq C(C_0)\|\mathrm{H}_E-\mathrm{\bar H}_E\|_{L^2(\de E)}^q.
\end{equation}

Theorem \ref{thm:JN} implies the following non-sharp version of Theorem \ref{t:LS}.

\begin{corollary}\label{l.vesa}
There exists $\tau_0\in (0,1)$ such that for every $C^2$-regular set $E \subset \R^3$ which satisfies  \eqref{e.condition} 
and $\big\|\mathrm{H}_E-\mathrm{\bar H}_E\big\|_{L^2(\partial E)}\leq \tau_0$
it holds 
\begin{equation}\label{e.vesa}
|\mathrm{\bar H}_E-2| \,+ \sup_{x\in E \Delta B_1(x_0)} \big||x - x_0|-1\big| + \big|P(E) - P(B_1)\big| \leq C \big\|\mathrm{H}_E-\mathrm{\bar H}_E\big\|_{L^2(\partial E)}^q,
\end{equation}
where $q\in (0,1]$ and $C>0$ are constants depending only on $\delta_0$ and $x_0 \in \R^3$.
\end{corollary}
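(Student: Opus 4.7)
The plan is to apply Theorem \ref{thm:JN} with $C_0=4\pi\sqrt[3]{2}$, and then use the strict perimeter gap $\delta_0$ in \eqref{e.condition} to exclude the multi-bubble configurations ($N\ge 2$), so that the approximating set $F=\bigcup_{i=1}^N B_\rho(x_i)$ collapses to a single ball. Shrinking $\tau_0$ so that $\tau_0\le \tau$ (the threshold in Theorem \ref{thm:JN}) guarantees its hypotheses, yielding $N\in\N$, points $x_1,\dots,x_N\in\R^3$ pairwise at distance at least $2\rho$, with $\rho=2/\mathrm{\bar H}_E$, and the bound
\[
\sup_{x\in E\Delta F}|d_F(x)| + \big|P(E) - 4\pi N^{1/3}\big| + |N-\rho^{-3}| \le C\,\|\mathrm{H}_E-\mathrm{\bar H}_E\|_{L^2(\partial E)}^q,
\]
where the last summand comes from \eqref{ffeq1}.

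I would then force $N=1$. If instead $N\ge 2$, the estimate above would give $P(E)\ge 4\pi\sqrt[3]{2}-C\tau_0^q$, contradicting $P(E)\le 4\pi\sqrt[3]{2}-\delta_0$ as soon as $\tau_0$ is chosen so that $C\tau_0^q<\delta_0$. Hence $N=1$, $F=B_\rho(x_0)$ for a single point $x_0\in\R^3$, and from $|1-\rho^{-3}|\le C\|\mathrm{H}_E-\mathrm{\bar H}_E\|_{L^2(\partial E)}^q$ (further shrinking $\tau_0$ if needed to keep $\rho$ close to $1$) I obtain
\[
|\rho-1| + |\mathrm{\bar H}_E-2| = |\rho-1| + |2\rho^{-1}-2| \le C\,\|\mathrm{H}_E-\mathrm{\bar H}_E\|_{L^2(\partial E)}^q.
\]

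The last step is to transfer the Hausdorff-type bound from $B_\rho(x_0)$ to $B_1(x_0)$. For $x\in E\Delta B_1(x_0)$ a short case analysis — distinguishing whether $|x-x_0|$ lies inside or outside the thin annulus between radii $\rho$ and $1$, and combining the estimate $|d_{B_\rho(x_0)}(x)|\le C\|\mathrm{H}_E-\mathrm{\bar H}_E\|_{L^2(\partial E)}^q$ (which applies when $x\in E\Delta B_\rho(x_0)$) with $|\rho-1|\le C\|\mathrm{H}_E-\mathrm{\bar H}_E\|_{L^2(\partial E)}^q$ via the triangle inequality — yields $\big||x-x_0|-1\big|\le C\|\mathrm{H}_E-\mathrm{\bar H}_E\|_{L^2(\partial E)}^q$. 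Plugging $N=1$ into Theorem \ref{thm:JN} also gives $|P(E)-P(B_1)|=|P(E)-4\pi|\le C\|\mathrm{H}_E-\mathrm{\bar H}_E\|_{L^2(\partial E)}^q$, and together these three bounds are precisely \eqref{e.vesa}.

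There is no genuine obstacle in this argument: the decisive step is the quantitative exclusion of $N\ge 2$, which is immediate from the perimeter gap $\delta_0$ provided by hypothesis \eqref{e.condition}, and all remaining work amounts to triangle-inequality bookkeeping to pass from the approximating ball $B_\rho(x_0)$ to the unit ball $B_1(x_0)$.
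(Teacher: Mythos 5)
Your argument is correct and follows the same route as the paper: apply Theorem \ref{thm:JN}, use the perimeter gap $\delta_0$ in \eqref{e.condition} to force $N=1$ (since $N\ge 2$ would give $P(E)\ge 4\pi\sqrt[3]{2}-C\tau_0^q$), and then use \eqref{ffeq1} to bound $|\rho-1|$ and $|\mathrm{\bar H}_E-2|$ and transfer the Hausdorff estimate from $B_\rho(x_0)$ to $B_1(x_0)$. The paper's proof is terser (it simply writes ``the inequality \eqref{ffeq1} implies the rest of the statement''), but the bookkeeping you spell out is precisely what is meant.
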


\begin{proof}
Theorem \ref{thm:JN} and  the condition \eqref{e.condition} yield
$$
4 \pi N^{\frac13} \leq P(E) + C\tau_0^q \leq 4 \pi \sqrt[3]{2}+ C\tau_0^q -\delta_0,
$$
for a number $N \in \N$. When $\tau_0$ is small enough the above implies $N < 2$ and thus $N= 1$. The inequality \eqref{ffeq1} implies  the rest of the statement.
\end{proof}

In turn, Corollary \ref{l.vesa} allows us to establish the following powerful result, which  contains  information on the topology of the sets.

\begin{corollary}\label{l.fra1}
There exists $\tau_0\in (0,1)$ such that, for every $C^\infty$-regular set $E \subset \R^3$ which satisfies  \eqref{e.condition} 
and $\big\|\mathrm{H}_E-\mathrm{\bar H}_E\big\|_{L^2(\partial E)}\leq \tau_0$, the boundary $\partial E$ of $E$ is a closed connected embedded smooth surface of $\R^3$ with genus $0$, which admits a smooth conformal orientation-preserving diffeomorphism 
$f : \mathbb{S}^2 \to \partial E$ and the conformal factor  $u\in C^{\infty}(\SSS^2)$,  defined by $f^* g_{\pa E} = e^{2u} g_{\mathbb{S}^2}$, satisfies
\begin{equation}\label{feq27}
\|u\|_{L^\infty(\mathbb{S}^2)} \leq C,
\end{equation}
where $C$ is a positive constant depending only on $\delta_0$. Here, the fixed orientation of $\pa E$ is that one given through \eqref{feqnormal}.
\end{corollary}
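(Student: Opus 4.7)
The plan is to read off the topology of $\partial E$ from the quantitative information in Corollary \ref{l.vesa} combined with the Li--Yau inequality and the Willmore conjecture recalled in Subsection \ref{smoothimmersion}, and then to apply Proposition \ref{prop:conformal} on a suitable rescaling of $\partial E$.

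After possibly shrinking $\tau_0$, Corollary \ref{l.vesa} furnishes a point $x_0$ with $B_{1-\tau_0^q}(x_0)\subset E\subset B_{1+\tau_0^q}(x_0)$, $|\mathrm{\bar H}_E-2|\le C\tau_0^q$ and $|P(E)-4\pi|\le C\tau_0^q$. Since $E$ is $C^\infty$-regular and bounded, $\partial E=\Sigma_1\sqcup\cdots\sqcup\Sigma_k$ is a finite disjoint union of closed connected embedded smooth surfaces of $\R^3$. The $L^2$-orthogonality of $\mathrm{H}_E-\mathrm{\bar H}_E$ to constants on $\partial E$ gives
\[
\int_{\partial E}\mathrm{H}_E^2\,d\mathcal H^2=\|\mathrm{H}_E-\mathrm{\bar H}_E\|_{L^2(\partial E)}^2+\mathrm{\bar H}_E^{\,2}\,P(E)\le \tau_0^2+(2+C\tau_0^q)^2(4\pi+C\tau_0^q)\le 16\pi+C\tau_0^q.
\]
By the Li--Yau inequality recalled in Subsection \ref{smoothimmersion}, each $\Sigma_i$ satisfies $\int_{\Sigma_i}\mathrm{H}_E^2\,d\mathcal H^2\ge 16\pi$, whence $16\pi\,k\le 16\pi+C\tau_0^q$ forces $k=1$ for $\tau_0$ small enough, so $\partial E$ is connected. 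If the genus $\mathfrak g$ of $\partial E$ were $\ge 1$, the Willmore conjecture in the form $\beta_\mathfrak g\ge 2\pi^2$ (Marques--Neves) would yield $\int_{\partial E}\mathrm{H}_E^2\,d\mathcal H^2\ge 8\pi^2>16\pi+C\tau_0^q$ for $\tau_0$ small, a contradiction. Hence $\partial E$ is a closed connected embedded smooth surface of $\R^3$ with genus $0$.

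To produce the conformal parametrization, I combine Gauss--Bonnet \eqref{Gauss--Bonnet Theorem} with the identity $|\ringg{\mathrm A}|^2=\mathrm H^2/2-2\mathrm K^G$ of Subsection \ref{smoothimmersion} to obtain
\[
\|\ringg{\mathrm A}_{\partial E}\|_{L^2(\partial E)}^2=\tfrac12\int_{\partial E}\mathrm H_E^2\,d\mathcal H^2-8\pi\le C\tau_0^q.
\]
Fix $\delta_1=4\pi$ and shrink $\tau_0$ so that $C\tau_0^q<8\pi-\delta_1$. Set $\lambda:=\sqrt{4\pi/P(E)}\in(1-C\tau_0^q,1+C\tau_0^q)$ and $\Sigma:=\lambda\,\partial E$. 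Then $\Sigma$ is a closed connected embedded smooth surface of genus $0$ with $\mathcal H^2(\Sigma)=4\pi$ and, by scale invariance of $\|\ringg{\mathrm A}\|_{L^2}^2$, $\|\ringg{\mathrm A}_\Sigma\|_{L^2(\Sigma)}^2<8\pi-\delta_1$. Proposition \ref{prop:conformal} then produces a smooth conformal orientation-preserving diffeomorphism $\tilde f:\SSS^2\to\Sigma$ with $\tilde f^*g_\Sigma=e^{2\tilde u}g_{\SSS^2}$ and $\|\tilde u\|_{L^\infty(\SSS^2)}\le C$. Define $f:=\lambda^{-1}\tilde f:\SSS^2\to\partial E$. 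Since the map $\phi:\partial E\to\Sigma$, $\phi(x)=\lambda x$, satisfies $\phi^*g_\Sigma=\lambda^2 g_{\partial E}$, one computes $f^*g_{\partial E}=\lambda^{-2}e^{2\tilde u}g_{\SSS^2}$, so the conformal factor $u=\tilde u-\log\lambda$ is bounded on $\SSS^2$ by a constant depending only on $\delta_0$; the orientation is preserved because $\lambda>0$.

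The key step, and the one requiring sharp geometric input, is the topological rigidity: it is the combination of Li--Yau (to kill extra connected components) and of the Willmore conjecture in the form $\beta_\mathfrak g\ge 2\pi^2$ for $\mathfrak g\ge 1$ (to kill higher genus) that reduces $\partial E$ to a topological sphere with small $\|\ringg{\mathrm A}\|_{L^2}^2$; once this is achieved, Proposition \ref{prop:conformal} delivers the conformal parametrization essentially for free, up to the harmless rescaling by $\lambda$.
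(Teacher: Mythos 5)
Your proof is correct and follows essentially the same route as the paper: Corollary \ref{l.vesa} gives $\int_{\partial E}\mathrm H_E^2\le 16\pi+C\tau_0^q$, this bound combined with the Willmore facts of Subsection \ref{smoothimmersion} forces connectedness and genus $0$, Gauss--Bonnet then gives $\|\ringg{\mathrm A}_{\partial E}\|_{L^2}^2\le C\tau_0^q$, and Proposition \ref{prop:conformal} applied to the rescaled surface with $4\pi$ area yields the conformal parametrization with the $\log$-of-scale shift in the conformal factor. The only (cosmetic) difference is that you spell out which Willmore properties are invoked (Li--Yau for connectedness, $\beta_{\mathfrak g}\ge 2\pi^2$ for $\mathfrak g\ge 1$ for genus), whereas the paper simply refers to ``the properties of the Willmore functional discussed at the end of Subsection 2.1.''
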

Observe that $E$ is connected as a consequence of the connectedness of $\de E$.
\begin{proof}
By Corollary \ref{l.vesa}, we have for $\rho=2\mathrm{H}_E^{-1}$  that
\begin{align}
\int_{\de E} \mathrm{H}_E^2 \, d\H^2 - 16 \pi 
&=\int_{\de E} \mathrm{H}_E^2 \, d\H^2 - \mathrm{\bar H}_E^2\, P(E) + \mathrm{\bar H}_E^2\,\big( P(E)-P(B_1)\big)+4\pi\,\mathrm{\bar H}_E^2\,\big(1-\rho^2\big) \nonumber\\
&\leq C\, \big\|\mathrm{H}_E-\mathrm{\bar H}_E\big\|_{L^2(\partial E)}^q\leq C\,\tau_0^q, \label{feq9}
\end{align}
where  $C>0$ is a constant depending only on $\delta_0$. Choosing then $\tau_0>0$ small enough, the boundary $\pa E$ of $E$ is connected and of genus zero due to the properties of the Willmore functional discussed at the end of  Subsection \ref{smoothimmersion}.
Now we use Gauss-Bonnet theorem \eqref{Gauss--Bonnet Theorem}, the fact that the genus of $\pa E$ is zero  and the estimate \eqref{feq9} in order to write 
\begin{align}
\int_{\de E} |\ringg{\mathrm{A}}_{E}|^2\, d\mathcal{H}^2&=\frac{1}{2}\int_{\de E} \mathrm{H}_{E}^2\, d\mathcal{H}^2-2\int_{\de E} \mathrm{K}^G_{E}\, d\mathcal{H}^2 
=\frac{1}{2}\int_{\de E} \mathrm{H}_{E}^2\, d\mathcal{H}^2- 4\pi \chi(\de E)\nonumber\\
&\leq C\, \big\|\mathrm{H}_E-\mathrm{\bar H}_E\big\|_{L^2(\partial E)}^q
\leq C\tau_0^{q}, \label{ffeq6}
\end{align}
where $C>0$ is always a constant depending only on $\delta_0$. Recall that $\ringg{\mathrm{A}}_{E}$ denotes the traceless part of the second fundamental form and $\mathrm{K}^G_{E}$ the Gaussian curvature of $\pa E$, defined in Subsection \ref{smoothimmersion}.
Therefore, by decreasing $\tau_0>0$ if necessary, we may assume that $\Vert \ringg{\mathrm{A}}_{E}\Vert_{L^2(\de E)}^2<2\pi$.
This allows us to  apply Proposition \ref{prop:conformal} to the set $F=\alpha E $ such that $P(F)=4\pi$, as the dilatation invariance yields 
$\int_{\de F} |\ringg{\mathrm{A}}_{F}|^2\, d\mathcal{H}^2=\int_{\de E} |\ringg{\mathrm{A}}_{E}|^2\, d\mathcal{H}^2\,$. Hence, 
 there exist a universal constant $\bar C>0$ and a smooth conformal orientation-preserving diffeomorphism $f:\SSS^2 \to \de E$ such that
\begin{equation}\label{feq25}
\Vert u+\log \alpha\Vert_{L^\infty(\SSS^2 )}\leq \bar C,
\end{equation}
where $f^*g_{\de E}=e^{2u} g_{\SSS^2}$. We obtain the estimate \eqref{feq27} from the conditions in \eqref{e.condition}.
\end{proof}

\section{Proof of Theorem \ref{t:LS}}\label{sec:3}

We will prove Theorem \ref{t:LS} with a variational argument by converting the inequality into a minimization problem. The core of the paper is to prove the following result, which then implies Theorem \ref{t:LS} by a rather straightforward argument. 
\begin{proposition}\label{t.main}
There exists $\bar\eps=\bar\eps(\delta_0)\in (0,1)$ such that for all $\eps\in (0, \bar \eps)$ the functional
\begin{equation}
J_\eps(E) = \int_{\pa E} |\mathrm{H}_E- {\mathrm{\bar H}}_E|^2 \, d\H^2 - \eps\, P(E) \label{ourfunct} \tag{$\star\,\star$}
\end{equation}
is minimized by the unit ball $B_1$ in the class of $C^2$-regular sets $E \subset \R^3$ which satisfy \eqref{e.condition}, i.e.,  $|E|=|B_1|$ and $P(E)\leq 4 \pi \sqrt[3]{2}  -\delta_0$.
\end{proposition}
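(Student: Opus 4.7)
The plan is to argue by contradiction, combining the weak-immersion compactness of Section~\ref{Weakimmersionsec} with a spectral analysis at the unit ball. Suppose the statement fails: there exist $\eps_n\to 0^+$ and $C^2$-regular competitors $E_n$ satisfying \eqref{e.condition} with $J_{\eps_n}(E_n)<J_{\eps_n}(B_1)=-4\pi\eps_n$. Then, using \eqref{e.condition},
\[
\|\mathrm H_{E_n}-\mathrm{\bar H}_{E_n}\|_{L^2(\partial E_n)}^2 < \eps_n\bigl(P(E_n)-4\pi\bigr)\leq 4\pi\eps_n(\sqrt[3]{2}-1)\to 0,
\]
so Corollary~\ref{l.vesa} (applied, if needed, after a $C^\infty$-mollification preserving the relevant quantities) gives Hausdorff convergence $E_n\to B_1(x_n)$ with $P(E_n)\to 4\pi$ and $\mathrm{\bar H}_{E_n}\to 2$. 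Translating so that each $E_n$ has barycenter at the origin, $x_n=0$. By Corollary~\ref{l.fra1}, $\partial E_n$ admits smooth conformal orientation-preserving parametrizations $\vec f_n:\SSS^2\to\partial E_n$ with uniformly bounded conformal factor, hence smooth weak immersions in $\mathcal E$ with $\mathcal W(\vec f_n)\to 4\pi$ and images in a common ball.

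Following the compactness scheme in the proof of Theorem~\ref{CH energy's minimizing} (based on Propositions~\ref{conv1} and~\ref{resultconv1}), I extract a subsequence with $\vec f_n\rightharpoonup \vec f_\infty$ in $W^{2,2}(\SSS^2,\R^3)$, with $\vec f_\infty\in\mathcal E$ weakly conformal, $\mathcal V(\vec f_\infty)=4\pi/3$, and $\mathcal A(\vec f_\infty)=4\pi$. These saturate the isoperimetric inequality $36\pi V^2\leq A^3$, so $\vec f_\infty(\SSS^2)=\partial B_1(0)$; the bound $\mathcal W(\vec f_\infty)\leq 4\pi<8\pi$ rules out branch points and, arguing as in Theorem~\ref{CH energy's minimizing}, makes $\vec f_\infty$ a smooth embedding onto $\partial B_1(0)$. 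The equality $\mathcal W(\vec f_\infty)=\lim\mathcal W(\vec f_n)=4\pi$ in \eqref{feq30}, together with the uniform conformal-factor bound, upgrades the convergence to strong $W^{2,2}$. After fixing the M\"obius gauge on $\SSS^2$ so that $\vec f_n\to\mathrm{id}_{\SSS^2}$, for $n$ large $\partial E_n$ admits a normal-graph representation $\{(1+\phi_n(p))p:p\in\SSS^2\}$ with $\phi_n\to 0$ in $W^{2,2}(\SSS^2)$.

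The volume constraint $|E_n|=|B_1|$ forces $\phi_{n,0}=O(\|\phi_n\|_{L^2}^2)$, and the centering of $E_n$ at its barycenter gives $\phi_{n,1}=O(\|\phi_n\|_{L^2}^2)$. Since $\mathrm H_{B_1}\equiv\mathrm{\bar H}_{B_1}$, all first-order terms in $\int|\mathrm H-\mathrm{\bar H}|^2$ vanish, and combined with the standard second variation of $P$ at $B_1$,
\[
J_{\eps_n}(E_n)-J_{\eps_n}(B_1)=\int_{\SSS^2}(-\Delta\phi_n-2\phi_n)^2\,d\sigma-\tfrac{\eps_n}{2}\int_{\SSS^2}(|\nabla\phi_n|^2-2\phi_n^2)\,d\sigma+o(\|\phi_n\|_{W^{2,2}}^2).
\]
Decomposing in spherical harmonics, the quadratic part equals $\sum_\ell(\ell(\ell+1)-2)[(\ell(\ell+1)-2)-\eps_n/2]\|\phi_{n,\ell}\|_{L^2}^2$: the $\ell=0$ contribution is $(4+\eps_n)\|\phi_{n,0}\|^2=O(\|\phi_n\|_{L^2}^4)$, the $\ell=1$ coefficient vanishes identically (translation kernel), and for $\ell\geq 2$ each coefficient is $\geq 4(4-\eps_n/2)\geq 8$ whenever $\eps_n\leq 4$, with $(\ell(\ell+1)-2)^2\geq \tfrac{4}{9}(\ell(\ell+1))^2$. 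Hence the quadratic form dominates $\tfrac{c}{2}\|\phi_n\|_{H^2}^2$ from below for some $c>0$, and $J_{\eps_n}(E_n)-J_{\eps_n}(B_1)\geq \tfrac{c}{4}\|\phi_n\|_{W^{2,2}}^2\geq 0$ for $n$ large, contradicting the strict inequality.

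I expect the main obstacle to be the passage from weak $W^{2,2}$-compactness of the conformal parametrizations to \emph{strong} $W^{2,2}$-convergence, and the subsequent transfer to a normal-graph representation with $W^{2,2}$-small height function $\phi_n$. The conformal parametrizations are defined only up to the M\"obius group of $\SSS^2$ and this gauge must be fixed so that $\vec f_\infty$ becomes a round parametrization; one then has to compare the conformal and radial parametrizations, transferring the strong convergence to $\phi_n$ in a form compatible with the volume constraint and the translation freedom used to suppress the $\ell=1$ kernel. Once this is achieved, the spectral gap of $-\Delta_{\SSS^2}-2$ away from its translation kernel does the rest.
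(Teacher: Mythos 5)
Your approach differs genuinely from the paper's, which proceeds in two steps: first it \emph{constructs} a smooth minimizer $\e=\e(\eps)$ of $J_\eps$ in the constrained class (via the weak-immersion compactness, Theorem~\ref{CH energy's minimizing}, and a chain of inequalities through the Canham--Helfrich problem), and second it shows any smooth minimizer is the ball by using the Euler--Lagrange equation \eqref{eq:eulerlagr} to bootstrap regularity --- ultimately obtaining the \emph{pointwise} bound \eqref{e.acca bar} on $\mathrm H_\e-\mathrm{\bar H}_\e$, which via \cite{KM} yields a $C^1$-small nearly-spherical graph and lets \cite{MoPoSpa} close the argument. You instead argue by contradiction from a sequence of low-energy competitors, attempt to pass to a strong limit, and finish with a spectral expansion at $B_1$. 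This would be shorter if it worked, but it has a genuine gap.

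The problematic step is the Taylor expansion with remainder $o(\|\phi_n\|_{W^{2,2}}^2)$. In two dimensions $W^{2,2}$ does not embed in $C^1$, and the expansion of $\int(\mathrm H_\phi-\mathrm{\bar H}_\phi)^2$ for a normal graph $\{(1+\phi(p))p\}$ contains remainder terms such as
\[
\int_{\SSS^2}(\Delta\phi)^2\,|\nabla\phi|^2\,d\sigma,
\]
arising from the Jacobian and prefactor expansions of $\mathrm H_\phi$, which can only be bounded by $\|\Delta\phi\|_{L^2}^2\,\|\nabla\phi\|_{L^\infty}^2$. There is no H\"older exponent that avoids the $L^\infty$ norm of $\nabla\phi$ here (one is pinned to $\Delta\phi\in L^2$), and $\|\nabla\phi_n\|_{L^\infty}$ is not controlled by $\|\phi_n\|_{W^{2,2}}$, even if $\phi_n\to0$ in $W^{2,2}$. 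So the claimed inequality $J_{\eps_n}(E_n)-J_{\eps_n}(B_1)\ge \tfrac{c}{4}\|\phi_n\|_{W^{2,2}}^2$ is not justified: the cubic and quartic error terms are not dominated by the quadratic form. This is precisely the obstruction the paper is designed around --- $L^2$-control of the mean curvature does not, by itself, give enough regularity to linearize at the ball. The paper obtains the needed $C^0$/$C^1$ control for the \emph{minimizer} by combining the Euler--Lagrange equation with the Michael--Simon inequality and the interpolation estimates \eqref{ffeq30}--\eqref{ffeq29}; your scheme, working with an arbitrary competitor, has no Euler--Lagrange equation to bootstrap from, and no substitute mechanism for upgrading $W^{2,2}$-smallness to $W^{1,\infty}$-smallness of the height function.

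Two secondary points also deserve attention. First, the upgrade from weak to strong $W^{2,2}$ convergence of the conformal parametrizations is asserted but not proved; equality in \eqref{feq30}--\eqref{feq31} for varying metrics $e^{2u_n}g_{\SSS^2}$ does not immediately give strong convergence of $D^2\vec f_n$, and the Möbius gauge-fixing / comparison between the conformal and radial parametrizations is a genuinely technical step you acknowledge but do not carry out. Second, even granting strong $W^{2,2}$ convergence of $\vec f_n$, producing a normal-graph height $\phi_n$ with $W^{2,2}$-smallness requires inverting the angular component of $\vec f_n$ and composing, and this needs care (though it is likely doable). But the decisive issue is the remainder estimate; without a route to $C^1$-smallness of $\phi_n$, the spectral argument cannot close.
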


\begin{proof}
Let us divide the proof in two steps.

\textbf{Step 1:} \emph{There exists $\bar\eps\in (0,1)$ such that for all $\eps\in (0, \bar \eps)$, the functional $J_{\varepsilon}$ admits a smooth minimizer $\e=\e(\varepsilon)$ in the class of the considered sets.}
We begin by choosing a minimizing sequence $(E_n)_n$ of $C^2$-regular sets which satisfy \eqref{e.condition} and converges to the infimum value of $J_\eps$. 
By an approximation argument, we may assume that the minimizing sequence  $(E_n)_n$  is smooth, i.e.,  $\pa E_n$  are $C^\infty$-surfaces.
We may also assume small oscillation of the mean curvature:
\begin{equation}\label{e.small osc}
\big\|\mathrm{H}_{E_n}- {\mathrm{\bar H}}_{E_n}\big\|_{L^2(\partial E_n)}^2 \leq 8 \pi  \bar \eps.
\end{equation}
Indeed, we may always assume the minimizing sequence to have smaller energy than the ball, i.e.,  $J_\eps(E_n)\leq J_\eps(B_1)$. Therefore, by the perimeter bound in \eqref{e.condition}, it holds 
\begin{equation*}
 \int_{\pa E_n} |\mathrm{H}_{E_n}- {\mathrm{\bar H}}_{E_n}|^2 \, d\H^2 \leq \eps\left(P(E_n) -  P(B_1)\right)\leq  4 \pi \sqrt[3]{2}  \eps \leq 8 \pi \bar \eps.
\end{equation*}
As important consequences of \eqref{e.small osc} when $\bar \eps$ is small enough, we have by Corollary \ref{l.vesa} and its consequences \eqref{feq9}-\eqref{ffeq6} that 
\begin{align}
B_{1-C\bar \eps^{q/2}}(x_n) \subset  E_n\subset B_{1+C\bar \eps^{q/2}}&(x_n)\label{ffeq31}\\
| \mathrm{\bar H}_{E_n}-2|\leq&\, C \bar \eps^{\frac{q}{2}} \label{ffeq11}\\
0 \leq P(E_n) - P(B_1) \leq& \,C  \bar \eps^{\frac{q}{2}}   \label{eq:step1-2} \\ 
\int_{\de E_n} \mathrm{H}_{E_n}^2 \, d\H^2 - 16 \pi \leq& \,C \bar \eps^{\frac{q}{2}} \label{ffeq7} \\
\int_{\de E_n} |\ringg{\mathrm{A}}_{E_n}|^2\, d\mathcal{H}^2\leq& \,C \bar \eps^{\frac{q}{2}}. \label{ffeq8}
\end{align}
Since the problem is translation invariant, we may  assume that each $x_n$ is the origin.
Furthermore, again for $\bar \eps$ small enough, Corollary~\ref{l.fra1} guarantees the existence of a uniform constant $C$ and a smooth conformal orientation-preserving diffeomorphism $\vec{f}_n : \mathbb{S}^2 \to \pa E_n\subset \R^3$
such that
\begin{equation}
\|u_n\|_{L^\infty(\mathbb{S}^2)} \leq C,
\end{equation}
where $u_n\in C^{\infty}(\SSS^2)$ is given by $ \vec{f}_{n}^{*} g_{\pa E_n}= e^{2u_n} g_{\mathbb{S}^2}$. We recall that as usual, the fixed orientation of $\pa E_n$ is that one given through \eqref{feqnormal}.

At this point, we observe that  by the above results we may write the energy  $J_\eps(E_n)$ using the parametric approach constructed in Subsection \ref{feqnormal} and have 
\begin{equation}\label{ffeq10}
J_\eps(E_n)=\int_{\SSS^2} (\mathrm{H}_{\vec{f_n}}-\mathrm{\bar H}_{\vec{f}_n})^2\,d\sigma_{g_{\vec{f}_n}}\!\!-\varepsilon \mathcal{A}( \vec{f}_n)=:J_\eps(\vec{f}_n)
\end{equation}
and $\mathcal{V}(\vec{f}_n)=|B_1|$, where $\mathcal{A}(\cdot)$ and $\mathcal{V}(\cdot)$ are the area and the (algebraic) volume, defined in~\eqref{ffeq9}.
These equalities follow from the fact that given any positive chart $(U, \varphi)$ of $\SSS^2$, then $(\vec{f}_n(U), \varphi \circ \vec{f}_n^{-1})$ is a positive chart of $\pa E_n$. Indeed, in such local charts, one can check
\begin{equation*}
\begin{matrix}
&\partial_{x^i}^{\pa E_n}|_q=\partial_{x^i}\vec{f}_n|_{\vec{f}_n^{-1}(q)}&g^{\pa E_n}_{ij}(q)= g^{\vec{f}_n}_{ij}(\vec{f}_n^{-1}(q))&\sqrt{\det (g^{\pa E_n}_{ij})(q)}=\sqrt{\det ( g^{\vec{f}_n}_{ij})(\vec{f}_n^{-1}(q))}\,\\
&N^{\pa E_n}(q)=N^{\vec{f}_n}(\vec{f}_n^{-1}(q))\,\, &\mathrm{A}^{\pa E_n}_{ij}(q)=\mathrm{A}^{\vec{f}_n}_{ij}(\vec{f}_n^{-1}(q))\,&\mathrm{H}_{\pa E_n}(q)=\mathrm{H}_{\vec{f}_n}(\vec{f}_n^{-1}(q)).
\end{matrix}
\end{equation*}
Joining all, one gets the equality \eqref{ffeq10}, while the identity $\mathcal{V}(\vec{f}_n)=|B_1|$ follows by also applying the divergence theorem.

Therefore  the sequence of the smooth conformal immersions $\vec{f}_n$ of the $2$-sphere $\SSS^2$ into $\R^3$ satisfy the assumptions of Proposition \ref{conv1} and by  \eqref{ffeq3} also the assumptions of Proposition \ref{resultconv1}.
Thus, by possibly passing to a converging sub-sequence,  we deduce that at the limit there exists a conformal weak immersion $\vec{f}_{\infty}\in \mathcal{E}$ of the $2$-sphere $\SSS^2$ into $\R^3$ such that the equalities and inequalities \eqref{feq32}--\eqref{feq31} are true.

Using the  notation from Section \ref{Weakimmersionsec}, we then have 
\begin{equation*}
\inf_{\substack{E\subset \R^3 \\ \text{satisfies \eqref{e.condition}}}}\!\!\!\!\! J_{\varepsilon}(E)\,=\!\lim_{n\to +\infty} J_{\varepsilon}(E_{n})=\!\lim_{n\to +\infty} J_{\varepsilon}(\vec{f}_{n})\,\geq\, J_{\varepsilon}(\vec{f}_\infty)=\!\int_{\SSS^2} (\mathrm{H}_\infty-\mathrm{\bar H}_\infty)^2\,d\sigma_{\infty} -\varepsilon \mathcal{A}(\vec{f}_\infty)
\end{equation*}
and $\mathcal{V}(\vec{f}_\infty)=|B_1|$. The map $\vec{f}_{\infty}\in \mathcal{E}$ is then the bridge between the two variational problems by virtue of the following crucial chain of inequalities,
\begin{equation}\label{ffeq14}
\inf_{\substack{E\subset \R^3 \\ \text{satisfies \eqref{e.condition}}}}\!\!\!\!\! J_{\varepsilon}(E)\,\geq \,J_{\varepsilon}(\vec{f}_\infty) \,\geq\, 4 \bigg(\inf_{\vec{h}\in\mathcal{E}_{A_\infty,V_\infty }}\!\!\!\mathcal{H}_{\mathrm{\bar H}_\infty}(\vec{h})\,\bigg) -\varepsilon A_\infty,
\end{equation}
where $A_\infty=\mathcal{A}(\vec{f}_\infty)$,  $V_\infty= \mathcal{V}(\vec{f}_\infty)$ and $\mathcal{E}_{A_\infty, V_\infty}=\{\vec{f}\in\mathcal{E}\,:\,\mathcal{A}(\vec{f})=A_\infty\,\,\text{and}\,\,\mathcal{V}(\vec{f})=V_\infty\}$. Here, $\mathcal{H}_{\mathrm{\bar H}_\infty}(\vec{h})$ is the Canham-Helfrich energy of the weak immersion $\vec{h}\in \mathcal{E}$, defined in \eqref{Can-Herenergy} with $c_0 = \mathrm{\bar H}_\infty$. Therefore,  next we show that the assumptions of Proposition \ref{CH energy's minimizing} are satisfied.

We begin by observing that joining the limit information \eqref{feq32}--\eqref{feq30} with the inequalities \eqref{e.small osc}, \eqref{ffeq11}, \eqref{eq:step1-2} we know that the  estimates
\begin{equation}\label{ffeq12}
0\leq A_\infty\!-4\pi\leq C \bar\eps^{\frac{q}{2}}\quad\quad\text{and} \quad\quad| \mathrm{\bar H}_{\infty}-2|\leq\, C \bar \eps^{\frac{q}{2}} 
\end{equation}
hold with the same uniform constant $C>0$ of \eqref{ffeq11}-\eqref{eq:step1-2}, and
\begin{equation}\label{ffeq13}
\int_{\SSS^2}(\mathrm{H}_{\infty}- {\mathrm{\bar H}}_{\infty})^2\,d\sigma_{\infty}\leq 8 \pi  \bar \eps.
\end{equation}
The  first  consequence of \eqref{ffeq12} is that the positive constants $A_\infty, V_\infty$  satisfy  the isoperimetric inequality $36\pi V_\infty^2\leq A_\infty^3$, as $\mathcal{V}_\infty=|B_1|$.  Let $\vec{h}_1, \vec{h}_2, \dots\in \mathcal{E}$ be a minimizing sequence for the problem $\inf_{\vec{h}\in\mathcal{E}_{A_\infty, V_\infty}}\!\!\!\mathcal{H}_{\mathrm{\bar H}_\infty}(\vec{h})$, $\widetilde{\sigma}_1,\widetilde{\sigma}_2,\dots$ the corresponding area measures, and $\widetilde{\mathrm{H}}_1, \widetilde{\mathrm{H}}_2, \dots $ the associated mean curvatures. By Minkowski inequality 
\[
\| \widetilde{\mathrm{H}}_n\|_{L^2(\SSS^2,\,\widetilde{\sigma}_n)}\leq \| \widetilde{\mathrm{H}}_n-\mathrm{\bar H}_\infty \|_{L^2(\SSS^2,\, \widetilde{\sigma}_n)} +\| \mathrm{\bar H}_\infty \|_{L^2(\SSS^2,\, \widetilde{\sigma}_n)} =  \| \widetilde{\mathrm{H}}_n-\mathrm{\bar H}_\infty \|_{L^2(\SSS^2,\, \widetilde{\sigma}_n)} +\mathrm{\bar H}_\infty \sqrt{A}_\infty.
\]
Since $\vec{f}_\infty \in \mathcal{E}_{A_\infty, V_\infty}$, we may assume that $\mathcal{H}_{\mathrm{\bar H}_\infty}(\vec{h}_n)\leq \mathcal{H}_{\mathrm{\bar H}_\infty}(\vec{f}_\infty)$ for all $n\in\N$. Therefore by   \eqref{ffeq12} and \eqref{ffeq13}  it holds 
\begin{equation*}
\| \widetilde{\mathrm{H}}_n\|_{L^2(\SSS^2, \,\widetilde{\sigma}_n)}\leq \| \widetilde{\mathrm{H}}_\infty-\mathrm{\bar H}_\infty \|_{L^2(\SSS^2,\,\sigma_{\infty})}  +\mathrm{\bar H}_\infty \sqrt{A}_\infty
\,\leq\, \sqrt{8 \pi  \bar \eps}+(2+C \bar \eps^{\frac{q}{2}}) \sqrt{4\pi+C \bar\eps^{\frac{q}{2}}}.
\end{equation*}
As a consequence, for $\bar\eps>0$ sufficiently small, the minimizing sequence of the $\vec{h}_n$'s satisfies the condition \eqref{feq45} (recall that $\mathcal{W}(\vec{h}_n)=\|\widetilde{\mathrm{H}}_n\|_{L^2(\SSS^2, \,\widetilde{\sigma}_n)}^2/4$). Thus, applying Theorem \ref{CH energy's minimizing}, 
the infimum of the problem $\inf_{\vec{h}\in\mathcal{E}_{A_\infty, V_\infty}}\!\!\!\mathcal{H}_{\mathrm{\bar H}_\infty}(\vec{h})$ is attained by a smooth embedding $\vec{h}:\SSS^2\to \R^3$.
Therefore, as explained in Subsection \ref{smoothimmersion}, $\vec{h}(\SSS^2)$ is the $C^\infty$-boundary of a smooth bounded set $\e\subset\R^3$, and it also holds  $P(\e)=A_\infty$, $|\e|=|B_1|$ and $$\mathcal{H}_{\mathrm{\bar H}_\infty}(\vec{h})=\frac{1}{4}\int_{\pa \e} (\mathrm{H}_{\e}-\mathrm{\bar H}_\infty)^2\, d\mathcal{H}^2.$$
The conclusion then follows by  observing that
\begin{align*}
\inf_{\substack{E\subset \R^3 \\ \text{satisfies \eqref{e.condition}}}}\!\!\!\!\! J_{\varepsilon}(E)&\,\geq 4 \bigg( \inf_{\vec{h}\in\mathcal{E}_{A_\infty,V_\infty }}\!\!\!\mathcal{H}_{\mathrm{\bar H}_\infty}(\vec{h})\,\bigg) -\varepsilon A_\infty  \\
&\,\geq \, \int_{\pa \e} (\mathrm{H}_{\e}-\mathrm{\bar H}_\infty)^2\, d\mathcal{H}^2-\varepsilon P(\e)\\
&\,\geq\,\int_{\pa \e} (\mathrm{H}_{\e}-\mathrm{\bar H}_{\e})^2\, d\mathcal{H}^2-\varepsilon P(\e)\\
&\,\geq\inf_{\substack{E\subset \R^3 \\ \text{satisfies \eqref{e.condition}}}}\!\!\!\!\!J_{\varepsilon}(E).
\end{align*}
Hence,  $\e$ is a smooth minimizer of the functional $J_{\varepsilon}$ in the class of $C^2$-regular sets $E \subset \R^3$ which satisfy \eqref{e.condition}.

\medskip
\textbf{Step 2:} \emph{For each $\eps\in (0, \bar \eps)$, possibly choosing a smaller $\bar\eps>0$, every smooth minimizer $\e=\e(\varepsilon)$ of the functional $J_{\varepsilon}$ is the unit ball, i.e. up to a translation it holds $\e= B_1$.}  

Since $\e$  is a smooth minimizer of the functional $J_{\varepsilon}$ it satisfies  the associated  Euler-Lagrange equation
\begin{equation}
\label{eq:eulerlagr}
-\Delta \mathrm{H}_{\e} = \vert \mathrm{A}_{\e}\vert^2(\h_{\e}-\bar\h_{\e}) - \frac{1}{2} (\h_{\e}-\bar\h_{\e})^{2\,}\h_{\e} +  \frac{\eps}{2}\,\h_{\e}+ \lambda \qquad \text{on } \, \pa \e 
\end{equation}
in the classical sense, where $\lambda$ is the Lagrange multiplier due to the volume constraint. The derivation of \eqref{eq:eulerlagr} is standard and thus we omit it. Next we prove uniform regularity estimates for $F$ using the priori bounds and the  Euler-Lagrange equation \eqref{eq:eulerlagr}.  In the following $C$ will denote a constant which is independent of $\varepsilon\in (0,\bar \varepsilon)$ and may vary from a line to line.


We begin by noticing that $\e$ also satisfies (for same reason of the $E_n$'s) the  estimates \eqref{e.small osc} and \eqref{ffeq31}-\eqref{ffeq8}. 
In order to estimate the Lagrange multiplier $\lambda$, we integrate \eqref{eq:eulerlagr} over $\pa \e$, use $\int_{\pa \e} \Delta\mathrm{H}_{\e}\,  d\H^2 = 0$ and the Young's inequality. From the estimates on the perimeter and on the integral average of the mean curvature it follows then that 
\begin{equation*}
 |\lambda | P(\e) \leq   C\int_{\pa \e }(1+ |\mathrm{A}_{\e}|^3) \, d\H^2\leq  C\int_{\pa \e }(1+ |\mathrm{A}_{\e}|^4) \, d\H^2,
\end{equation*}
which yields 
\begin{equation}\label{ffeq15}
|\lambda | \leq C\big(1+ \| A_{\e} \|_{L^4(\pa \e)}^4\big),
\end{equation}
as $P(\e)\geq 4\pi$.
We continue by multiplying the Euler-Lagrange equation \eqref{eq:eulerlagr} by $ \mathrm{H}_{\e}$ and integrating by parts. 
After estimating the right hand side similarly to before, we then obtain 
\begin{equation}\label{ffeq17}
\|\nabla \mathrm{H}_{\e} \|_{L^2(\pa \e)}^2 \leq  C(1+ \| \mathrm{A}_{\e} \|_{L^4(\pa \e)}^4) + |\lambda|\,P(\e) \,\bar\h_{\e}\leq C(1+ \| A_{\e} \|_{L^4(\pa \e)}^4 ), 
\end{equation}
where the last inequality follows from \eqref{ffeq11}  and \eqref{ffeq15}. We then recall the Simon's identity \eqref{eq:simon}. We muplity \eqref{eq:simon} by $\mathrm{A}_{\e}$ and integrate by parts, 
\[
\|\nabla \mathrm{A}_{\e} \|_{L^2(\pa \e)}^2 \leq \|\nabla\mathrm{H}_{\e} \|_{L^2(\pa \e)}^2  +   C \|\mathrm{A}_{\e}  \|_{L^4(\pa \e)}^4.
\]
Hence, we have by the two above 
\begin{equation}
\label{eq:step3-2}
\|\nabla \mathrm{A}_{\e} \|_{L^2(\pa \e)}^2  \leq  C(1+\| \mathrm{A}_{\e} \|_{L^4(\pa \e)}^4) \leq C\big( 1+\big \| \mathrm{A}_{\e} -\frac{1}{2}\,\mathrm{\bar H}_{\e}\, g \big\|_{L^4(\pa \e)}^4\big).
\end{equation}
Notice that in these last two passages we used  classical properties on the norm of tensors, see \eqref{ffeq23}.
We proceed by using the Michael-Simon inequality \eqref{MSinequality} with the function $u= | \mathrm{A}_{\e} -\frac{1}{2}\,\mathrm{\bar H}_{\e}\, g|^2$. We then have by Cauchy-Schwarz and by \eqref{eq:step3-2} that 
\begin{align}
&\big\|  \mathrm{A}_{\e} -\frac{1}{2}\,\mathrm{\bar H}_{\e}\, g\big\|_{4}^2 \leq C \int_{\pa \e} |\nabla  \mathrm{A}_{\e}| \big|\mathrm{A}_{\e} -\frac{1}{2}\,\mathrm{\bar H}_{\e}\, g\big| + |\mathrm{H}_{\e}| \big| \mathrm{A}_{\e} -\frac{1}{2}\,\mathrm{\bar H}_{\e}\, g\big|^2 \, d \H^2 \nonumber\\
&\leq C \int_{\pa \e} |\nabla  \mathrm{A}_{\e}|\big| \mathrm{A}_{\e} -\frac{1}{2}\,\mathrm{\bar H}_{\e}\, g\big| + |\mathrm{H}_{\e}-\mathrm{\bar H}_{\e}| \big|\mathrm{A}_{\e} -\frac{1}{2}\,\mathrm{\bar H}_{\e}\, g \big|^2 +\mathrm{\bar H}_{\e}  \big|\mathrm{A}_{\e} -\frac{1}{2}\,\mathrm{\bar H}_{\e}\, g \big|^2  \, d \H^2 \nonumber \\
&\leq  C\Big(\|\nabla \mathrm{A}_{\e} \|_{2}\, \big\| \mathrm{A}_{\e} -\frac{1}{2}\,\mathrm{\bar H}_{\e}\, g \big\|_{2} +\| \mathrm{H}_{\e}-\mathrm{\bar H}_{\e} \|_{2}\, \big\| \mathrm{A}_{\e} -\frac{1}{2}\,\mathrm{\bar H}_{\e}\, g  \big\|_{4}^2+ \big\| \mathrm{A}_{\e} -\frac{1}{2}\,\mathrm{\bar H}_{\e}\, g \big\|_{2}^2\Big)\nonumber\\
&\leq C \bigg[\bigg(\!\Big\|  \mathrm{A}_{\e} -\frac{\mathrm{\bar H}_{\e}}{2}\, g\Big\|_{2}\!\!+ \| \mathrm{H}_{\e}-\mathrm{\bar H}_{\e} \|_{2}\!\bigg) \Big\|  \mathrm{A}_{\e} -\frac{\,\mathrm{\bar H}_{\e}}{2} g\Big \|_{4}^2\!\!+\bigg(\!1\!+\Big\|  \mathrm{A}_{\e} -\frac{\mathrm{\bar H}_{\e}}{2}\, g\Big\|_{2}\bigg)\Big\|  \mathrm{A}_{\e} -\frac{\,\mathrm{\bar H}_{\e}}{2}\, g\Big\|_{2} \bigg]. \nonumber
\end{align}
Since the estimates \eqref{e.small osc} and \eqref{ffeq8} imply 
\[
 \| \mathrm{H}_{\e}-\mathrm{\bar H}_{\e} \|_{L^2(\pa \e)} \! + \big\| \mathrm{A}_{\e} -\frac{1}{2}\,\mathrm{\bar H}_{\e}\, g \big\|_{L^2(\pa \e)} \leq C \big( \| \mathrm{H}_{\e}-\mathrm{\bar H}_{\e} \|_{L^2(\pa \e)} \! + \big\|\ringg{\mathrm{A}}_{\e} \big\|_{L^2(\pa \e)} \big)\leq C \bar \eps^{\frac{q}{2}},
\]
when $\bar \eps>0$  is chosen small enough, we may absorb the term with $L^4$-norm on the right hand side with the that one on left hand side and as a consequence, we get 
\begin{equation}
\label{eq:step3-4}
\| \mathrm{A}_{\e} -\frac{1}{2}\,\mathrm{\bar H}_{\e}\, g \|_{L^4(\pa \e)}^2 \leq C \bar \eps^{\frac{q}{2}}. 
\end{equation}
Therefore  using   \eqref{ffeq16}, \eqref{ffeq11},  and \eqref{eq:step3-4} the above implies 
\begin{equation}\label{ffeq18}
\|\mathrm{H}_{\e}\|_{L^4(\pa \e)}^4\leq C\|\mathrm{A}_{\e}\|_{L^4(\pa \e)}^4\leq C\big(\|\mathrm{A}_{\e}  -\frac{1}{2}\,\mathrm{\bar H}_{\e}\, g \|_{L^4(\pa \e)}^4+\mathrm{\bar H}_{\e}^4 P(\e)\big)\leq C.
\end{equation}

The inequality \eqref{ffeq18} and the perimeter bound imply that we may use the interpolation inequalities from \cite[Section 6]{ManCa} on $\pa F$. First, by \cite[Proposition 6.2]{ManCa}, we have
\begin{equation}\label{ffeq30}
\max_{\pa \e}|\mathrm{H}_{\e}-\mathrm{\bar H}_{\e}|\leq C\big(\|\nabla\mathrm{H}_{\e}\|_{L^4(\pa \e)}+\|\mathrm{H}_{\e}-\mathrm{\bar H}_{\e}\|_{L^4(\pa \e)}\big)\,.
\end{equation}
To estimate the right hand side we  apply \cite[Proposition 6.5]{ManCa} and obtain
\begin{align}
\|\nabla\mathrm{H}_{\e}\|_{L^4(\pa \e)}&\leq C \|\mathrm{H}_{\e}-\mathrm{\bar H}_{\e}\|_{W^{2,2}(\pa \e)}^{\frac{3}{4}}\, \|\mathrm{H}_{\e}-\mathrm{\bar H}_{\e}\|_{L^2(\pa \e)}^{\frac{1}{4}}, \label{ffeq28}\\
\|\mathrm{H}_{\e}-\mathrm{\bar H}_{\e}\|_{L^4(\pa \e)}&\leq C \|\mathrm{H}_{\e}-\mathrm{\bar H}_{\e}\|_{W^{1,2}(\pa \e)}^{\frac{1}{2}}\, \|\mathrm{H}_{\e}-\mathrm{\bar H}_{\e}\|_{L^2(\pa \e)}^{\frac{1}{2}}.\label{ffeq29}
\end{align}
Note that  the constants in \eqref{ffeq30}, \eqref{ffeq28} and \eqref{ffeq29} are uniform, in particular, independent of $\varepsilon$. 
Using \eqref{ffeq15},  \eqref{ffeq17} and  \eqref{ffeq18}  we have
\begin{align}
|\lambda|\leq C \label{ffeq19}\\
\|\nabla \mathrm{H}_{\e} \|_{L^2(\pa \e)}^2 \leq C.\label{ffeq20}
\end{align}
Therefore, in order to  prove 
\begin{equation}\label{ffeq27}
\|\mathrm{H}_{\e}-\mathrm{\bar H}_{\e}\|_{W^{2,2}(\pa \e)}\leq C,
\end{equation}
we need a uniformly bound for  $ \|\nabla^2 \mathrm{H}_{\e}\|_{L^{2}(\pa \e)}$.

To this aim, we proceed in the  spirit of \cite[Lemma 2.5]{KS01}. We multiply \eqref{ffeq21} by $\nabla  \mathrm{H}_{\e}$ and integrate by parts to have 
\[
\int_{\pa \e} |\nabla^2\mathrm{H}_{\e}|^2\, d \, d \H^2
=\int_{\pa \e} (\Delta  \mathrm{H}_{\e})^2\, d \H^2 +\int_{\pa \e} \mathrm{A}_{\e}*\mathrm{A}_{\e}*\nabla \mathrm{H}_{\e}*\nabla \mathrm{H}_{\e}\,d \H^2.
\]
Then, by \eqref{ffeq23}, arguing as in \eqref{eq:step3-2} and using \eqref{eq:step3-4} we have 
\begin{equation}\label{ffeq22}
\begin{split}
\int_{\pa \e} |\nabla^2\mathrm{H}_{\e}|^2\, d \, d \H^2 &\leq \int_{\pa \e} (\Delta  \mathrm{H}_{\e})^2\, d \H^2 + C\int_{\pa \e} |\mathrm{A}_{\e}|^2\,|\nabla \mathrm{H}_{\e}|^2\,d \H^2\\
&\leq  \int_{\pa \e} (\Delta  \mathrm{H}_{\e})^2\, d \H^2 +C\big( 1 +  \| \mathrm{A}_{\e} -\frac{1}{2}\,\mathrm{\bar H}_{\e}\, g \|_{L^4(\pa \e)}^2 \,\|\nabla \mathrm{H}_{\e}\|_{L^4(\pa \e)}^2\big)\\
&\leq  \int_{\pa \e} (\Delta  \mathrm{H}_{\e})^2\, d \H^2 +C\big( 1 +  \bar \eps^{\frac{q}{2}} \,\|\nabla \mathrm{H}_{\e}\|_{L^4(\pa \e)}^2\big).
\end{split}
\end{equation}
We use again the Euler-Lagrange equation \eqref{eq:eulerlagr} together with the inequalities  \eqref{ffeq11},  \eqref{ffeq19} and \eqref{ffeq16} and apply the Young's inequality and obtain 
\[
(\Delta  \mathrm{H}_{\e})^2 \leq C\big(1+|\mathrm{A}_{\e}|^6 \big) \qquad \text{on } \, \pa F. 
\]
This and  \eqref{eq:step1-2} yield
\begin{equation}\label{ffeq24}
\int_{\pa\e} (\Delta  \mathrm{H}_{\e})^2\, d \H^2\leq C\Big(1+\int_{\pa \e}|\mathrm{A}_{\e}|^6\,d \H^2\Big).
\end{equation}
We apply \cite[Proposition 6.1]{ManCa} with $p=1$ to the function $|\mathrm{A}_{\e}|^3$ and use  H\"{older}'s inequality to have 
\begin{align*} 
\left(\int_{\pa \e}|\mathrm{A}_{\e}|^6\, d \H^2\right)^{\frac12}&\leq C\Big(\int_{\pa \e}|\mathrm{A}_{\e}|^2\, |\nabla\mathrm{A}_{\e}|\, d \H^2+\int_{\pa \e}|\mathrm{A}_{\e}|^3\, d \H^2\Big)\\
&\leq C\Big(\|\mathrm{A}_{\e}\|_{L^4(\pa\e)}^2\,\|\nabla\mathrm{A}_{\e}\|_{L^2(\pa\e)}+\|\mathrm{A}_{\e}\|_{L^4(\pa\e)}^{\frac{3}{4}} \Big)
\end{align*}
Therefore, the inequality
\begin{equation}\label{ffeq25}
\|\nabla\mathrm{A}_{\e}\|_{L^2(\pa\e)}\leq C,
\end{equation}
which is a consequence of \eqref{eq:step3-2} with \eqref{eq:step3-4}, along with  \eqref{ffeq18} implies $\|\mathrm{A}_{\e}\|_{L^6(\pa\e)}^6\leq C$. Hence, we obtain
\begin{equation}\label{ffeq26}
\int_{\pa\e} (\Delta  \mathrm{H}_{\e})^2\, d \H^2\leq C.
\end{equation}

We apply \cite[Proposition 6.1]{ManCa} to the function $\sqrt{|\nabla \mathrm{H}_{\e}|^4+\delta^2}$,  send $\delta\to 0$ and by the Young's inequality with \eqref{ffeq20} we have  
\begin{equation*}
\|\nabla \mathrm{H}_{\e}\|_{L^4(\pa \e)}^2 \leq C \big(\| \nabla^2 \mathrm{H}_{\e}\|_{L^2(\pa \e)}^2 + \|\nabla \mathrm{H}_{\e}\|_{L^2(\pa \e)}^2 \big) \leq C \big(1 +\| \nabla^2 \mathrm{H}_{\e}\|_{L^2(\pa \e)}^2\big),
\end{equation*} 
which together with \eqref{ffeq22} and \eqref{ffeq26}~yields
\begin{equation*}
\| \nabla^2 \mathrm{H}_{\e}\|_{L^2(\pa \e)} \leq  C\big( 1 +  \bar \eps^{\frac{q}{2}} \,\| \nabla^2 \mathrm{H}_{\e}\|_{L^2(\pa \e)}\big).
\end{equation*}
Therefore, when $ \bar \eps$ is small enough we obtain $\| \nabla^2 \mathrm{H}_{\e}\|_{L^2(\pa \e)}  \leq C$. 
In conclusion, the estimate \eqref{ffeq27} holds.

We use \eqref{e.small osc}, \eqref{ffeq28}, \eqref{ffeq29}  and \eqref{ffeq27} to obtain 
\begin{equation*}
\|\nabla\mathrm{H}_{\e}\|_{L^4(\pa \e)}\leq C \bar \varepsilon^{\frac{1}{8}}\quad\quad\text{and}\quad\quad 
\|\mathrm{H}_{\e}-\mathrm{\bar H}_{\e}\|_{L^4(\pa \e)}\leq C \bar \varepsilon^{\frac{1}{4}}\,.
\end{equation*}
By \eqref{ffeq30} this  implies 
\begin{equation}\label{e.acca bar}
\max_{\pa \e}|\mathrm{H}_{\e}-\mathrm{\bar H}_{\e}|\leq C\bar\eps^{\frac{1}{8}}\,.
\end{equation}
Let us write \eqref{e.acca bar} in a slightly different way. We define 
\[
\mathrm{H}_{\e}^0=\frac{2P(\e)}{3|\e|}
\]
and notice that by \eqref{ffeq11},  \eqref{eq:step1-2} and $|F| = |B_1|$ it holds
\[
\mathrm{\bar H}_{\e} - C \bar \eps^{\frac{q}{2}} \leq 2 \leq \mathrm{H}_{\e}^0 \leq  \mathrm{\bar H}_{\e}  + C  \bar \eps^{\frac{q}{2}}. 
\]
Therefore we have by \eqref{e.acca bar} that  
\begin{equation}\label{e.acca bar bis}
\big|\mathrm{H}_{\e}-\mathrm{H}_{\e}^0\big| \leq  C  \bar \eps^{\frac{q}{2}}.
\end{equation}
This in turn implies
\begin{equation}
\delta_{\text{cmc}}(\e):=\Big\|\frac{\mathrm{H}_{\e}}{\mathrm{H}_{\e}^0}-1\Big\|_{C^0(\pa\e)}\leq C  \bar \eps^{\frac{q}{2}}.
\end{equation}

Notice that the quantity $\delta_{\text{cmc}}(\cdot)$ is scaling invariant, while $\mathrm{H}_{\e}^0$ is not.
Therefore, scaling $\e$ such that $\mathrm{H}_{\alpha \e}^0=2$ and possibly choosing a smaller $\bar\varepsilon>0$, the assumptions of \cite[Theorem 1.8]{KM} are satisfied by $\widetilde{\e}=\alpha \e$, where $\alpha=P(\e)/P(B_1)$.
Accordingly, by the proof of \cite[Theorem 1.8]{KM}  and the arguments in \cite{MoPoSpa}, there exists $\widetilde{w}\in C^{1,1}(\SSS^2)$ such that, up to  translation, it holds $\textup{bar}(\widetilde F)=0$ and $\pa \widetilde{\e}=\{(1+\widetilde{w}(x))x\,:\,x\in\SSS^2\}$ with $\|\widetilde{w}\|_{C^{1}(\SSS^2)}\leq C\,\delta_{\text{cmc}}(\e)$. Thus,
$$\pa \e=\{(1+w(x))x\,:\,x\in\SSS^2\}\quad\text{with}\quad w(x)=\frac{1}{\alpha}\Big[\Big(1-\alpha\Big)+\widetilde{w}\Big],$$
and 
 $$ \|w\|_{C^{1}(\SSS^2)}\leq |1-\alpha|+\|\widetilde{w}\|_{C^{1}(\SSS^2)}\leq C\bar\varepsilon^{\,\widetilde{q}}\,,$$
for some $\widetilde{q}\in (0,1)$, by using \eqref{eq:step1-2} and its consequence $\alpha^{-1}\leq 1$.
We may now use  the quantitative Alexandrov theorem from \cite{MoPoSpa} for nearly spherical sets, by possibly decreasing  $\bar\varepsilon>0$, which implies 
\begin{equation*}
P(\e) - P(B_1) \leq C \|w\|_{W^{1,2}(\SSS^2)}^2 \leq  C\|\mathrm{H}_{\e}-\mathrm{\bar H}_{\e} \|_{L^2(\pa \e)}^2.
\end{equation*}
Thus, we have obtained $P(\e) - P(B_1) \leq C_1 \|\mathrm{H}_{\e}-\mathrm{\bar H}_{\e} \|_{L^2(\pa \e)}^2$ for a uniform constant $C_1>0$. Therefore, when $\bar\varepsilon<1/C_1$, it holds
\begin{equation*}
J_\eps(\e) = \| \mathrm{H}_{\e}-\mathrm{\bar H}_{\e}  \|_{L^2(\pa \e)}^2 - \eps P(\e) \geq - \eps P(B_1) = J_\eps(B_1)
\end{equation*}
and the equality is true only when $\e$ is the ball. By the minimality of $\e$ we have equality in the above and thus $\e$ is the ball.  
\end{proof}

We are now ready to prove  Theorem \ref{t:LS}, which follows easily from Proposition \ref{t.main} as we will see~below.
\begin{proof}[Proof of Theorem \ref{t:LS}]
Let $\bar\varepsilon>0$ be from  Proposition \ref{t.main} and fix $\varepsilon\in (0,\bar\varepsilon)$.
For every $C^2$-regular  $E\subset\R^3$ satisfying the conditions \eqref{e.condition}, by Proposition \ref{t.main}, we know that $J_\varepsilon(E)\geq J_\varepsilon(B_1)$, which implies 
$$P(E)-P(B_1)\leq \varepsilon^{-1\,}\| \mathrm{H}_{E}-\mathrm{\bar H}_{E}  \|_{L^2(\pa E)}^2\,.$$
The rest of the statement of Theorem \ref{t:LS} follows  from Corollary \ref{l.fra1}.
\end{proof}

\begin{remark}
By a simple scaling argument we have the statement of Theorem~\ref{t:LS} for sets with generic  volume $|E| = v$. Indeed,  fix $v>0$ and denote the radius $\rho$ with $|B_\rho| = v$. By Theorem~\ref{t:LS} for every  $\delta_0>0$ there exists $C(\delta_0)>0$ such that for every $C^2$-regular set $E\subset \R^3$ with $|E|=|B_\rho|$ and  $P(E) \leq  (4\pi \sqrt[3]{2}  - \delta_0)\rho^2$ it holds 
\begin{equation} \label{eq:Lemma5.1-1}
\big|P(E)- 4 \pi \rho^2\big|\leq C \rho^2 \, \big\|\mathrm{H}_{E}-\mathrm{\bar H}_{E}\big\|_{L^2(\partial E)}^2.
\end{equation}
\end{remark}

\section{Application to volume preserving curvature flow}\label{sec:3bis}

We begin this section by proving the following consequence of Theorem~\ref{t:LS}. 

\begin{proposition}\label{prop:5-1}
Assume that there are  balls $B_{r}(x_1), \dots, B_{r}(x_N)$ which are  quantitatively disjoint   in the sense that $|x_i - x_j|\geq 2r +\delta_1$ with $i \neq j$ for some $\delta_1 >0$ such that their union $F = \bigcup_{i=1}^N B_{r}(x_i)$ has the measure $|F| = |B_1|$. Then there is $\eps_0= \eps_0(\delta_1, M)>0$ such that for all $C^2$-regular sets $E \subset \R^3$ with $|E|= |B_1|$, $P(E)\leq M$ and  $|E \Delta F|\leq \eps_0$ it holds
\[
P(E) - 4 \pi N^{\frac13}  \leq C \| \mathrm{H}_{E}-\mathrm{\bar H}_{E}\|_{L^2(\pa E )}^2,
\]
for a constant $C$ that depends on $M$ and $\delta_1>0$. 
\end{proposition}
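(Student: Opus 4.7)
The plan is to partition $E$ into pieces $E_i$, each close to one ball $B_r(x_i)$, apply the scaled form of Theorem~\ref{t:LS} (as stated in the Remark after its proof) to every piece, and combine the resulting estimates using the minimality property of the integral average $\mathrm{\bar H}_{E_i}$ together with a Jensen-type inequality.

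First, a dichotomy reduces the proof to the small-oscillation regime. If $\|\mathrm{H}_E - \mathrm{\bar H}_E\|_{L^2(\partial E)}^2 \geq \eta$ for some threshold $\eta = \eta(\delta_1, M) > 0$ to be chosen, the claim is trivial since $P(E) - 4\pi N^{1/3} \leq P(E) \leq M$. Otherwise, choosing $\eta \leq \tau(M)$ so that Theorem~\ref{thm:JN} applies with $C_0 = M$, we obtain a set $F' = \bigcup_{i=1}^{N'} B_\rho(y_i)$ with $|y_i - y_j| \geq 2\rho$ and
\[
\sup_{x \in E \Delta F'} \dist(x, \partial F') + |N'\rho^3 - 1| \leq C \eta^{q/2}.
\]
Comparing this Hausdorff approximation with the $L^1$ approximation $|E \Delta F| \leq \eps_0$ and the quantitative disjointness $|x_i - x_j| \geq 2r + \delta_1$, one verifies that for $\eps_0$ and $\eta$ small enough (in terms of $\delta_1, M$) one must have $N' = N$, $\rho = r = N^{-1/3}$, and, after relabeling, $|y_i - x_i| \ll \delta_1$. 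The bound $N \leq \bar N(M)$ also drops out of the same comparison.

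Second, set $A_i := B_{r + \delta_1/4}(x_i)$. The $A_i$ are pairwise disjoint with mutual distance at least $\delta_1/2$, and the Hausdorff estimate forces $E \subset \bigsqcup_i A_i$ with $\partial E \cap \partial A_i = \emptyset$ for every $i$. Hence $E_i := E \cap A_i$ is a union of connected components of $E$, inherits the $C^2$-regularity, and $\partial E = \bigsqcup_i \partial E_i$. Defining $r_i > 0$ by $|B_{r_i}| = |E_i|$, the Hausdorff proximity of $E_i$ to $B_r(x_i)$ gives $r_i \to r$ and $P(E_i) - 4\pi r_i^2 \to 0$ as $\eps_0, \eta \to 0$. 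In particular $P(E_i) \leq (4\pi \sqrt[3]{2} - \delta_0')r_i^2$ for some $\delta_0' = \delta_0'(\delta_1) > 0$, and the scaled form of Theorem~\ref{t:LS} gives
\[
P(E_i) - 4\pi r_i^2 \leq C(\delta_0')\, r_i^2\, \|\mathrm{H}_{E_i} - \mathrm{\bar H}_{E_i}\|_{L^2(\partial E_i)}^2.
\]
Since $\mathrm{H}_{E_i} = \mathrm{H}_E$ on $\partial E_i$ and $\mathrm{\bar H}_{E_i}$ is the $L^2(\partial E_i)$-best constant approximation of $\mathrm{H}_E$, summing over $i$ and using the uniform bound on $r_i$ yields
\[
P(E) - \sum_i 4\pi r_i^2 \leq C \sum_i \|\mathrm{H}_E - \mathrm{\bar H}_E\|_{L^2(\partial E_i)}^2 = C \|\mathrm{H}_E - \mathrm{\bar H}_E\|_{L^2(\partial E)}^2.
\]
Finally, the volume identity $\sum_i r_i^3 = N r^3 = 1$ together with concavity of $t \mapsto t^{2/3}$ gives, by Jensen's inequality, $\sum_i r_i^2 = \sum_i (r_i^3)^{2/3} \leq N^{1/3}$, so $\sum_i 4\pi r_i^2 \leq 4\pi N^{1/3}$ and the claim follows.

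The main obstacle is the decomposition step: $L^1$ closeness alone does not prevent $E$ from having thin filaments bridging the $A_i$'s, so one must first upgrade to Hausdorff closeness before partitioning $\partial E$. This is precisely where the qualitative Alexandrov theorem from \cite{JN} enters, and it forces the dichotomy in the first place. Once the clean partition is available, the remainder of the argument is essentially algebra together with the scaled Theorem~\ref{t:LS}.
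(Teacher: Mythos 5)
Your proposal is correct and follows essentially the same route as the paper's proof: a dichotomy on $\|\mathrm{H}_E-\mathrm{\bar H}_E\|_{L^2}$ to reduce to the small-oscillation regime, Theorem~\ref{thm:JN} to upgrade $L^1$-closeness to Hausdorff closeness and identify $N$, a decomposition of $E$ into pieces $E_i$ each near one ball, the scaled form \eqref{eq:Lemma5.1-1} of Theorem~\ref{t:LS} on each piece together with the fact that $\mathrm{\bar H}_{E_i}$ is the optimal constant in $L^2(\partial E_i)$, and finally the power-mean/Jensen inequality $\sum r_i^2\leq N^{1/3}$ from $\sum r_i^3=1$. The only minor imprecision is attributing $P(E_i)-4\pi r_i^2\to 0$ to Hausdorff proximity alone — one also needs the global perimeter estimate from Theorem~\ref{thm:JN} (as the paper makes explicit via $P(E)=\sum P(E_i)\leq 4\pi N^{1/3}+C\eps_1^q$), but this is available and used implicitly in your argument.
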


\begin{proof}
We note that we may assume that  $\| \mathrm{H}_{E}-\mathrm{\bar H}_{E}\|_{L^2(\pa E )} \leq \eps_1$, for small $\eps_1>0$, since otherwise the claim is trivially true for  $C\geq M/\eps_1$. Then by Theorem \ref{thm:JN} and \eqref{ffeq1} there exists a union of disjoint equisize balls, denote it by  $\tilde F$, such that $|\tilde F| = |B_1|$ and
\[
\sup_{x \in E \Delta \tilde F} |d_{\tilde F}| + \big| P(E) - P(\tilde F) \big|\leq C \eps_1^q.  
\]
By the assumption $|E \Delta F|\leq \eps_0$, where $F = \bigcup_{i=1}^N B_r(x_i) $ with $|x_i-x_j|\geq 2 r + \delta_1$ for $i \neq j$, the sets $F$ and $\tilde F$ must both have $N$ many balls with the same radius $r$ and  we have  
\begin{equation} \label{eq:Lemma5.1-2}
\sup_{x \in E \Delta F} |d_{F}|  + \big| P(E) - 4 \pi N^{\frac13} \big| \leq C \eps_0,
\end{equation}
when $\eps_1$ is chosen small. Note that $r= N^{-\frac13}$ and $N$ is bounded by the perimeter bound on $E$. From the above we deduce that there are disjoint  sets $E_1, \dots, E_N$ such that $E = \bigcup_{i=1}^N E_i$ and $B_{r-C\eps_0}(x_i) \subset E_i \subset B_{r+C\eps_0}(x_i) $. Let us choose for every $i$ radius $r_i$ such that $|E_i| = |B_{r_i}|$. Then $r-C\eps_0 \leq r_i \leq r+C\eps_0$ and by the isoperimetric inequality 
\[
P(E_i) \geq 4 \pi r_i^2\geq 4 \pi (r- C \eps_0)^2 \geq  4 \pi N^{-\frac23} - C\eps_0.
\]
 On the other hand by \eqref{eq:Lemma5.1-2} it holds 
\[
P(E) = \sum_{i =1}^N P(E_i) \leq 4 \pi N^{\frac13} + C \eps_1^q.
\] 
Therefore it holds  $|P(E_i) - 4 \pi r_i^2| \leq C\eps_0$ for all $i = 1, \dots, N$ and \eqref{eq:Lemma5.1-1}  implies 
\[
P(E_i) - 4 \pi r_i^2 \leq C \|\mathrm{H}_{E_i}-\mathrm{\bar H}_{E_i}\|_{L^2(\pa E_i)}^2 \leq C \|\mathrm{H}_{E_i}-\mathrm{\bar H}_{E}\|_{L^2(\pa E_i)}^2. 
\]
Summing over $i$  yields 
\[
P(E) - 4 \pi \sum_{i=1}^N r_i^2 \leq C \sum_{i=1}^N \|\mathrm{H}_{E_i}-\mathrm{\bar H}_{E}\|_{L^2(\pa E)}^2  = C \|\mathrm{H}_{E}-\mathrm{\bar H}_{E}\|_{L^2(\pa E)}^2.  
\]
Recall that the radii $r_i$ are such that  $\sum_{i=1}^N |B_{r_i}| = |B_1|$, i.e.,  $\sum_{i=1}^N r_i^3 = 1$. Therefore it holds 
\[
4 \pi \sum_{i=1}^N r_i^2 \leq 4 \pi \left( \sum_{i=1}^N r_i^3  \right)^{\frac23} \left( \sum_{i=1}^N 1  \right)^{\frac13}  =4 \pi  N^{\frac13}
\]
which implies  the claim. 
\end{proof}

In this section we will show how the sharp quantitative Alexandrov theorem, i.e.,  Theorem \ref{t:LS}, implies the convergence of the flat flow for the volume preserving mean curvature flow \eqref{eq:VMCF}.   We begin by recalling the definition of the flat flow. This was first given in \cite{MSS}, following the associated scheme proposed in \cite{ATW, LS}, but here we use the variant given in \cite{Vesa} since it simplifies the forthcoming analysis. We refer to  \cite{MSS, MoPoSpa, Vesa} for a more detailed introduction.

We fix the time step $h>0$, and  given a bounded set of finite perimeter $E$ with $|E| = v\in (0,+\infty)$,  we consider the minimization problem 
\beq
\label{def:min-prob}
\min  \Big{\{} P(F) + \frac{1}{h}\int_F  d_E \, dx : \,  |F| = v \Big{\}}
\eeq
 and note that the minimizer exists but might not be unique.  Here $d_E$ denotes the signed distance function as defined in \eqref{def:signdist}. We define the dissipation of a set $F$ with respect to a set $E$ as
 \beq
\label{def:distance}
\mathcal{D}(F,E) := \int_{F \Delta E} \dist(x,\partial E)\, dx 
\eeq
and observe that we may write the minimization problem \eqref{def:min-prob} as 
\beq
\label{def:min-probbis}
\min  \Big{\{} P(F) + \frac{1}{h}\mathcal{D}(F,E)
 : \,  |F| = v \Big{\}}.
\eeq
We note that by a simple scaling argument we may reduce to the case $v = |B_1|$ by changing the value of $h$. 

Let $E(0) \subset \R^3$ then be a bounded set of finite perimeter with $|E(0)|= |B_1|$ and which coincides with its Lebesgue representative. 
We construct  the discrete-in-time evolution $\{E_k^{(h)}\}_{k\in \N}$ by recursion in such a way that $E(0)=E$ and assuming that $E_k^{(h)}$ is defined we set $E_{k+1}^{(h)}$ to be a minimizer of \eqref{def:min-prob} with $E= E_k^{(h)}$. Notice that by standard regularity theory $\partial E_k^{(h)}$ is $C^{2,\alpha}$-regular (for all $\alpha\in (0,1)$) and therefore in \eqref{def:min-probbis} we use this exact representative in order to compute $\dist(\cdot,\partial E_k^{(h)})$. We define the {\em approximate volume preserving flat flow}  $\{E^{(h)}(t)\}_{t \geq 0}$ by setting
\[
E^{(h)}(t) = E_k^{(h)} \qquad \text{for }\, t \in [kh, (k+1) h).
\]
Let us then recall some basic a priori estimates for the approximative flat flow $\{E^{(h)}(t)\}_{t \geq 0}$.  
Using the minimality of $E_{k+1}^{(h)}$ and comparing its energy \eqref{def:min-prob} against the previous set  $E_k^{(h)}$, we obtain the following important estimate   for all $k =0,1, \dots $
 \beq
\label{eg:energy-compa}
P(E_{k+1}^{(h)})  + \frac{1}{h}\mathcal{D}(E_{k+1}^{(h)},E_k^{(h)})   \leq P(E_{k}^{(h)})  . 
\eeq
In view that $\partial E_{k+1}^{(h)}$ is $C^{2,\alpha}$-regular (for all $\alpha\in (0,1)$), it satisfies the Euler-Lagrange equation
 \beq
\label{eg:Euler-Lag}
\frac{d_{E_k^{(h)}}}{h} = - \mathrm{H}_{E_{k+1}^{(h)}} + \lambda_{k+1}^{(h)} \qquad \text{on }\, \partial E_{k+1}^{(h)},
\eeq
in the classical sense, where $\lambda_{k+1}^{(h)}$ is the Lagrange multiplier due to the volume penalization. It follows from the argument in the proof of  \cite[Lemma 3.6]{MSS} or \cite[Lemma 4.2]{Vesa}, that 
 \beq
\label{eg:dissipation-0}
\int_{ \partial E_{k+1}^{(h)}} d_{E_{k}^{(h)}}^2 \, d \H^2 \leq C \mathcal{D}(E_{k+1}^{(h)},E_k^{(h)})
\eeq
for some universal constant $C>0$.
By summing \eqref{eg:energy-compa} from $k =0$ to $k \to \infty$, and  using \eqref{eg:Euler-Lag} and \eqref{eg:dissipation-0}, we obtain the dissipation inequality
 \beq
\label{eg:dissipation}
\int_{h}^\infty \int_{ \partial E^{(h)}(t)} (\mathrm{H}_{E^{(h)}(t)} - \mathrm{\bar H}_{E^{(h)}(t)})^2 \, d \H^2 dt \leq C P(E(0)) .
\eeq
Note also that \eqref{eg:energy-compa} implies that $t\mapsto P(E^{(h)}(t))$ is non-increasing.  
%
We now recall the definition of {\em flat flow}.
\begin{definition}\label{dff}
A flat flow solution of \eqref{eq:VMCF}  is any  family of sets $\{E(t)\}_{t \geq 0}$ which is a cluster point of $\{E^{(h)}(t)\}_{t \geq 0}$, i.e., 
\[
E^{(h_n)}(t) \to E(t) \quad \text{as } \, h_n \to 0 \quad \text{in } \, L^1 \quad \text{for almost every }\, t >0 .
\]
\end{definition}
By \cite[Theorem 1]{Vesa} there exists a flat flow starting from $E(0)$.

We are interested in the long time behavior of the flow. To this aim we recall the following elementary  algebraic lemma from  \cite{JuMoPoSpa}.

\begin{lemma}\label{lem:an1}
Let $K\in \N$ and $\{a_k\}_{k\in \{1,\ldots, K\}}$ be a sequence of non-negative numbers.
Assume  that there exists $C>1$ such that 
\[
\sum_{k=i}^{K}a_k\leq C a_i
\]
for every $i\in\{1,\ldots, K\}$.
Then,
$$
\sum_{k=i+1}^{K}a_k\leq \Bigl(1-\frac1C\Bigr)^{i}S
$$
for every $i\in \{1,\ldots, K-1\}$, where $S:=\sum_{k=1}^{K}a_k$.
\end{lemma}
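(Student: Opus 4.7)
The plan is to exploit the hypothesis by introducing the tail sums
\begin{equation*}
S_i := \sum_{k=i}^{K} a_k, \qquad i \in \{1,\dots,K\},
\end{equation*}
with the convention $S_{K+1}=0$, so that in particular $S_1 = S$ and $a_i = S_i - S_{i+1}$. The assumption $\sum_{k=i}^K a_k \le C a_i$ is exactly the inequality $S_i \le C a_i$, which rearranges as $a_i \ge S_i/C$.

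Substituting this lower bound on $a_i$ into the identity $S_{i+1} = S_i - a_i$ gives the one-step geometric contraction
\begin{equation*}
S_{i+1} \le \Bigl(1-\frac1C\Bigr) S_i \qquad \text{for every } i \in \{1,\dots,K-1\}.
\end{equation*}
Iterating this recursion starting from $i=1$ yields $S_{i+1} \le \bigl(1-\tfrac1C\bigr)^{i} S_1 = \bigl(1-\tfrac1C\bigr)^{i} S$ for each $i \in \{1,\dots,K-1\}$, which is precisely the claimed estimate.

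I do not foresee any genuine obstacle here: the statement is a one-line geometric-decay argument, essentially a discrete Gronwall inequality. The only point worth noting is that the hypothesis must be available at every index $i$, so that the contraction can be chained all the way down from $i=1$ to $i=K-1$; using it at a single index would not suffice.
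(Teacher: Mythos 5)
Your proof is correct and is the standard (essentially the only) argument: setting $S_i = \sum_{k=i}^K a_k$, the hypothesis gives the one-step contraction $S_{i+1} \le (1-1/C)S_i$, which iterates to the claim. The paper cites this lemma from \cite{JuMoPoSpa} without reproducing a proof, but your argument is the expected one.
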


The following lemma is a particular case of the well-known fact that $C^2$-sets are $\Lambda$-minimizers of the perimeter (see e.g. \cite[Lemma 4.1]{AFM}). 
\begin{lemma}\label{lem:eke}
Let $N \in \N$ and $B_r(x_1), \dots, B_r(x_N)$ be disjoint balls in $\R^n$, with $|x_i-x_j|\geq 2r+ \delta_1$ for $i\neq j$,  and denote $F =  \bigcup_{i=1}^N B_r(x_i)$. Then there exists  a constant $\Lambda>0$, which depends on $r,n,N$ and $\delta_1$, such that for every set of finite perimeter $E \subset \R^n$ it holds 
\[
P(F) \leq P(E) + \Lambda |E\Delta F|.
\]
\end{lemma}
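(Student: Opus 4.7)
The plan is to exhibit a compactly supported $C^1$ vector field $X:\R^n\to\R^n$ with $\|X\|_{L^\infty}\le 1$, $X=\nu_F$ on $\partial F$, and $\|\div X\|_{L^\infty}\le \Lambda$, and then use the divergence theorem applied both to $F$ and to the competitor $E$.

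First I would exploit the quantitative separation to build a uniform tubular neighbourhood of $\partial F$. Since the balls $B_r(x_i)$ are pairwise disjoint with $|x_i-x_j|\ge 2r+\delta_1$, the signed distance $d_F$ is of class $C^\infty$ on the open set
\[
U_\delta:=\{x\in\R^n:|d_F(x)|<\delta\},\qquad \delta:=\min\bigl\{r/2,\ \delta_1/4\bigr\},
\]
with $|\nabla d_F|=1$ and $\nabla d_F=\nu_F$ on $\partial F$. Moreover the Laplacian $\Delta d_F$ is uniformly bounded in $U_\delta$ by a constant depending only on $n$, $r$ and $\delta_1$ (the principal curvatures of the parallel hypersurfaces to $\partial F$ stay bounded because each connected component of $\partial F$ is a sphere of radius $r$ and we stay at distance $\le r/2$ from them).

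Next I would fix a cut-off $\varphi\in C^\infty_c((-\delta,\delta))$ with $\varphi(0)=1$ and $|\varphi|\le 1$, and define
\[
X(x):=
\begin{cases}
\varphi(d_F(x))\,\nabla d_F(x) & x\in U_\delta,\\
0 & x\notin U_\delta.
\end{cases}
\]
Then $X\in C^1_c(\R^n,\R^n)$, $\|X\|_{L^\infty}\le 1$, and $X=\nu_F$ on $\partial F$. A direct computation yields
\[
\div X=\varphi'(d_F)\,|\nabla d_F|^2+\varphi(d_F)\,\Delta d_F=\varphi'(d_F)+\varphi(d_F)\,\Delta d_F,
\]
so $\|\div X\|_{L^\infty}\le\Lambda$ with $\Lambda=\Lambda(r,n,N,\delta_1)$.

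Finally, the divergence theorem gives
\[
P(F)=\int_{\partial F}X\cdot\nu_F\,d\H^{n-1}=\int_F\div X\,dx,
\]
while for any set of finite perimeter $E$ the definition of perimeter applied to the admissible test field $X$ yields
\[
\int_E\div X\,dx\le \|X\|_{L^\infty}P(E)\le P(E).
\]
Subtracting and estimating,
\[
P(F)-P(E)\le \int_F\div X\,dx-\int_E\div X\,dx=\int_{F\setminus E}\div X\,dx-\int_{E\setminus F}\div X\,dx\le \Lambda\,|E\Delta F|.
\]
There is no real obstacle here; the only point that needs care is to check that the tubular neighbourhood has width bounded below in terms of $r$ and $\delta_1$ (so that the components of $\partial F$ do not interact and so that $d_F$ remains smooth up to distance $\delta$ inside $F$), which is exactly what the separation assumption guarantees.
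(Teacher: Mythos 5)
Your proof is correct. The paper does not prove this lemma; it only remarks that it is a special case of the well-known $\Lambda$-minimality of $C^2$-sets and cites \cite[Lemma 4.1]{AFM}. Your vector-field construction $X=\varphi(d_F)\nabla d_F$, together with the divergence theorem and the definition of perimeter, is exactly the standard argument behind that fact, and all the quantitative checks (smoothness of $d_F$ on $U_\delta$ thanks to $\delta\le\min\{r/2,\delta_1/4\}$, the bound $|\Delta d_F|\le 2(n-1)/r$ there, and the final splitting of $\int_F\div X-\int_E\div X$ over $F\setminus E$ and $E\setminus F$) are correct. Note that your $\Lambda$ in fact does not depend on $N$, which is a minor improvement over what is asserted.
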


\begin{proof}[\textbf{Proof of Theorem \ref{thm2:MCF}}]

Let $\{E^{h_n}(t)\}_{t\geq 0}$ an approximate flat flow converging to $\{E(t)\}_{t\geq 0}$ and let $N$, $r>0$, and $x_i(t)$, $i=1, \dots, N$, be as in the statement. By rescaling we may assume that $|E(0)| = |B_1|$. 
Set
\[
f_n(t) = P(E^{(h_n)}(t)).
\]
By \eqref{eg:energy-compa} the $f_n$'s are monotone non-increasing functions which are  bounded by $P(E(0))$. Therefore, by Helly's Selection Theorem,
up to passing to a further sub-sequence (not relabeled), we may assume that the functions $f_n$'s converge pointwise to some non-increasing function $f_\infty:[0,+\infty)\to \R$.  Note that it follows from \eqref{asym} and Lemma \ref{lem:eke} that 
\[
\lim_{t \to \infty}f_\infty(t) \geq 4 \pi N^{\frac13}.
\]
 We need to distinguish two cases.

\noindent{\bf Case 1.} We  first assume that 
\beq\label{case1}
f_\infty(t)> 4\pi N^{\frac13} \quad\text{for all $t\in [0, +\infty)$.}
\eeq

We obsere that by using the argument in \cite{JN}, i.e., using  the estimates (4.23) and (4.25)  in  the proof of  \cite[Theorem~1.1]{JN}, we have that  for every $\eps>0$ there exists $T_\eps>1$  with the following property:  for every $T>T_\eps$   there exists $h_0=h_0(\eps, T)>0$  such that for  $0<h_n\leq h_0$ and $t\in (T_\eps, T)$ there exist points $x^{(h_n)}_1(t)$, \dots,  $x^{(h_n)}_N(t)$ satisfying  
    \begin{itemize}
\item[(a)] $|x^{(h_n)}_i(t)-x^{(h_n)}_j(t)|\geq 2r$ and $x^{(h_n)}_i(t)\to x_i(t)$ as $h_n\to 0$,
\item[(b)] $\displaystyle \sup_{ E^{(h_n)}(t)\Delta F^{(h_n)}(t)}\dist(\cdot , \pa F^{(h_n)}(t))\leq \eps$, where we have set  $\displaystyle F^{(h_n)}(t):=\bigcup_{i=1}^NB_r\big(x^{(h_n)}_i(t)\big)$.
\end{itemize}
Recall now that $|E^{(h_n)}(\cdot )\Delta E(\cdot )|\to 0$ uniformly in $(T_\eps, T)$, as $E^{(h_n)}(\cdot )$ and $E(\cdot )$ are H\"{o}lder continuous \cite{MSS, JN,  Vesa}, and recall that \eqref{asym} holds.  Hence by choosing $T_\eps$ larger and $h_0$ smaller, we may also ensure  (by triangular inequality) that $F^{(h_n)}(t)$ is uniformly $(C\eps)$-close to $F(t)$ in $(T_\eps, T)$ with respect to the $L^1$-distance, and thus  with respect to the Hausdorff distance up to change the constant $C$, which is always independent both of $n$ and $T$. Hence, by choosing $\eps$ sufficiently small and recalling  \eqref{separated}, we may  assume that  in fact 
\[
|x^{(h_n)}_i(t)-x^{(h_n)}_j(t)|\geq 2r+\frac{\delta_1}2\quad \text{ for $0<h_n\leq h_0$ and $t\in (T_\eps, T)$.}
\]
We may thus use Lemma \ref{prop:5-1} to deduce that 
\begin{equation} \label{eq:thm2-0}
P(E^{(h_n)}(t)) - 4 \pi N^{\frac13}  \leq C \| \mathrm{H}_{E^{(h_n)}(t)}-\mathrm{\bar H}_{E^{(h_n)}(t)}\|_{L^2(\pa E^{(h_n)}(t) )}^2,
\end{equation}
for a constant that depends on $P(E(0))$ and $\delta_1$.

We proceed by proving that the $E(t)$ converges in $L^1$-sense. The argument is now similar to that of \cite[Theorem~1.2]{JuMoPoSpa} but  some modifications are needed. In what follows $C$ will denote a positive constant independent both of $n$ and $T$, which may change from line to line and also within the same display.

 Let $T>T_\eps$ be as  above.  Then, by \eqref{case1} and the definition of $f_\infty$ it follows that 
 \[
 P(E^{(h_n)}(t))>  4\pi N^{\frac13}\quad\text{for all }t\in [0, T]
 \]
 and for $0<h_n\leq h_0$ (by taking $h_0>0$ smaller if needed). 
In turn (for the same $n$'s), 
by iterated use of \eqref{eg:energy-compa} and then by \eqref{eq:thm2-0} we deduce for all $t\in (T_\eps+h_n, T-h_n)$
\[
\begin{split}
 \frac{1}{h_n}\sum_{k= \lfloor\frac{t}{h_n}\rfloor}^{\lfloor\frac{T}{h_n}\rfloor-1} \mathcal{D}(E_{k+1}^{(h_n)},E_{k}^{(h_n)})  & \leq   P(E^{(h_n)}(t)) -P(E^{(h_n)}(T)) < P(E^{(h_n)}(t)) - 4\pi N^{\frac13} \\
 &\leq \widetilde C  \| \mathrm{H}_{E^{(h_n)}(t)}-\mathrm{\bar H}_{E^{(h_n)}(t)}\|_{L^2}^2,
 \end{split}
\]
where $\lfloor a\rfloor$ denotes the integer part of $a >0$. 
 
We proceed by using the Euler-Lagrange equation \eqref{eg:Euler-Lag} and the estimate \eqref{eg:dissipation-0}  to get  
\begin{equation} \label{eq:thm2-1}
\begin{split}
\|\mathrm{H}_{E^{(h_n)}(t)}-\mathrm{\bar H}_{E^{(h_n)}(t)}\big\|_{L^2}^2 &\leq \|\mathrm{H}_{E^{(h_n)}(t)}- \lambda_{\lfloor \frac{t}{h_n}\rfloor}^{(h_n)} \big\|_{L^2}^2 \\
&=\frac{1}{h_n^2}\| d_{E^{(h_n)}(t-h_n)} \big\|_{L^2(\partial E^{(h_n)}(t))}^2 \leq \frac{C}{h_n^2}\mathcal{D}(E_{\lfloor\frac{t}{h_n}\rfloor}^{(h_n)},E_{\lfloor\frac{t}{h_n}\rfloor-1}^{(h_n)}).
\end{split}
\end{equation}
The two previous displays yield
\[
\frac{1}{h_n} \sum_{k=\lfloor\frac{t}{h_n}\rfloor-1 }^{\lfloor\frac{T}{h_n}\rfloor-1} \mathcal{D}(E_{k+1}^{(h_h)},E_{k}^{(h_n)})   \leq \frac{C+h_n}{h_n^2} \mathcal{D}(E_{\lfloor\frac{t}{h_n}\rfloor}^{(h_n)},E_{\lfloor\frac{t}{h_n}\rfloor-1}^{(h_n)})\leq 
\frac{C'}{h_n^2} \mathcal{D}(E_{\lfloor\frac{t}{h_n}\rfloor}^{(h_n)},E_{\lfloor\frac{t}{h_n}\rfloor-1}^{(h_n)})
\]
with $C'>0$ independent of $T$ and $n$.
Denoting $a_k = \frac{1}{h_n} \mathcal{D}(E_{k+1}^{(h_h)},E_{k}^{(h_n)})$ and recalling that
\[
\frac{1}{h_n} \sum_{k=\lfloor\frac{T_\eps}{h_n}] }^{\lfloor\frac{T}{h_n}\rfloor-1} \mathcal{D}(E_{k+1}^{(h_h)},E_{k}^{(h_n)})\leq   P(E^{(h_n)}(T_\eps))\leq P(E(0))
\]
we may use Lemma \ref{lem:an1} to deduce 
\begin{equation} \label{eq:thm2-2}
\frac{1}{h_n} \sum_{k=\lfloor\frac{t}{h_n}\rfloor }^{\lfloor\frac{T}{h_n}\rfloor-1} \mathcal{D}(E_{k+1}^{(h_h)},E_{k}^{(h_n)})   \leq P(E(0)) \left( 1 - \frac{h_n}{C'}\right)^{\lfloor\frac{t}{h_n}\rfloor-\lfloor\frac{T_\eps}{h_n}\rfloor}\leq C e^{-t/C'} 
\end{equation}
for all $t\in (T_\eps+h_n, T-h_n)$ and $0<h_n\leq h_0$.

By \cite[Proposition 3.4]{MSS} it holds
\[
 |E^{(h_n)}_{k+1}\Delta E^{(h_n)}_{k}| \leq C l P(E^{(h_n)}_{k+1})
+ \frac{C}{l} \mathcal{D}(E_{k+1}^{(h_n)},E_{k}^{(h_n)})  
\]
for all $l \leq \frac1C \sqrt{h_n}$.  Therefore, by the inequality above, by $P(E^{(h_n)}_{k+1}) \leq P(E(0))$ and by \eqref{eq:thm2-2}   for every $t ,s$, with  $T_\eps + h_n \leq t < s \leq t+1 \leq T-h_n$, we have that  
\beq\label{comesopra}
\begin{split}
|E^{(h_n)}(t)\Delta E^{(h_n)}(s)|  & \leq \sum_{k=\lfloor\frac{t}{h_n}\rfloor }^{\lfloor\frac{s}{h_n}\rfloor-1} |E^{(h_n)}_{k+1}\Delta E^{(h_n)}_{k}|\\
& \leq C P(E(0)) l \frac{s-t}{h_n} +\frac{C}{l}\sum_{k=\lfloor\frac{t}{h_n}\rfloor }^{\lfloor\frac{s}{h_n}\rfloor-1}\mathcal{D}(E^{(h_n)}_{k+1},E^{(h_n)}_{k})\\
&\leq  \frac{C  l}{h_n} +\frac{C \, h_n}{l} e^{-t/C'},
\end{split} 
\eeq
for all $l \leq \frac1C \sqrt{h_n}$ and $0<h_n\leq h_0$.
In particular, choosing $l= h_n e^{-t/2C'}$, we have
\[
|E^{(h_n)}(t)\Delta E^{(h_n)}(s)| \leq Ce^{-t/2C'}.
\]
Passing to the limit as $h_n\to 0$, we get
\beq  \label{eq:thm2-4bis}
|E(t)\Delta E(s)| \leq  Ce^{-t/2C'} \qquad \text{for all} \, \, T_\eps\leq t < s \leq t+1 \leq T.
\eeq
Since $T>T_\eps$ is arbitrary,  we conclude that $E(t)$ converges exponentially fast to a set of finite perimeter $F$  in $L^1$, which by our assumptions is necessarily  of the form  
\beq\label{Fform}
F = \bigcup_{i =1}^N B_r(x_i), \quad \text{with $|x_i -x_j| \geq 2r + \delta_1$ for  $i \neq j$.} 
\eeq
Let us also  observe that for   $t\in (T_\eps+h_n, T-h_n)$ and $0<h_n\leq h_0$ we have 
\beq \label{eq:thm2-4}
\begin{split}
\int_{t+h_n}^{T-h_n} \|\mathrm{H}_{E^{(h_n)}(s)} &-\mathrm{\bar H}_{E^{(h_n)}(s)}\big\|_{L^2(\partial E^{(h_n)}(s))}^2\, ds \leq \sum_{k= \lfloor\frac{t}{h_n}\rfloor}^{\lfloor\frac{T}{h_n}\rfloor-1} h_n \|\mathrm{H}_{E_{k+1}^{(h_n)}}-\mathrm{\bar H}_{E_{k+1}^{(h_n)}}\big\|_{L^2(\partial E_{k+1}^{(h_n)})}^2 \\
&\leq\frac{C}{h_n} \sum_{k=\lfloor\frac{t}{h_n}\rfloor }^{\lfloor \frac{T}{h_n}\rfloor-1} \mathcal{D}(E_{k+1}^{(h_h)},E_{k}^{(h_n)}) \leq C e^{-t/C'} \end{split}
\eeq
where  in the second inequality we argued as in  \eqref{eq:thm2-1}, while in the last we used \eqref{eq:thm2-2}. 

For the same $t>T_\eps+1$ and $0<h_n\leq h_0$  we deduce from \eqref{eq:thm2-4} and from the mean value theorem that there exists $t_n \in [t - e^{-t/2C'}, t]$ such that 
\[
\|\mathrm{H}_{E^{(h_n)}(t_n)} -\mathrm{\bar H}_{E^{(h_n)}(t_n)}\big\|_{L^2(\partial E^{(h_n)}(t_n))}^2 \leq C  e^{-t/2C'} .
\]
By Theorem \ref{thm:JN} and \eqref{ffeq1} we deduce that $E^{(h_n)}(t_n)$ is close to a set  $F^{(h_n)}(t_n):=\bigcup_{i=1}^N B_r(x_i^{(h_n)}(t_n) )$, for  suitable $x_i^{(h_n)}(t_n) \in \R^3$, not only in $L^1$-sense but also 
\beq  \label{eq:thm2-5}
\sup_{x\in E^{(h_n)}(t_n) \Delta F^{(h_n)}(t_n) }\dist(\cdot, \pa F^{(h_n)}(t_n)) + P(E^{(h_n)}(t_n)) - 4 \pi N^{\frac13} \leq C  e^{- qt/4C'}. 
\eeq
Since $t_n\leq t$ we have  $P(E^{(h_n)}(t_n)) \geq  P(E^{(h_n)}(t)) $ and thus \eqref{eq:thm2-5} holds a fortiori with  $P(E^{(h_n)}(t_n))$ replaced by $P(E^{(h_n)}(t))$. 
Therefore sending $h_n\to 0$,  we deduce by the lower semicontinuity of the perimeter that 
\beq\label{expP}
 P(E(t)) - 4 \pi N^{\frac13} \leq C  e^{- qt/4C'}.
\eeq
On the other hand, by Lemma~\ref{lem:eke} we have 
\beq\label{expP2}
4 \pi rN^{\frac13}= P(F)\leq P(E(t))+ \Lambda|E(t)\Delta F|
\eeq
with $F$ given by \eqref{Fform}. Recalling that $|E(t)\Delta F|\to 0$  exponentially fast, \eqref{expP} and \eqref{expP2} yield the exponential convergence of $P(E(t))$ to $N4 \pi r^2$ as $t\to+\infty$.

We now turn to the Hausdorff convergence. Notice that inequality \eqref{eq:thm2-5} is true for $t$, possibly with a worse exponential rate, by applying \cite[Lemma 4.3]{JN}, and observe that passing to the limit  in \eqref{eq:thm2-5} (up to a $t$-dependent further sub-sequence, if needed) yields
\beq\label{quasi}
\sup_{ E(t) \Delta F(t)} \dist(\cdot, \pa F(t))  \leq C  e^{- qt/2C'} ,
\eeq
where $F(t):=\bigcup_{i=1}^N B_r(x_i(t))$ for suitable $x_i(t) \in \R^3$. Recalling that
$|E(t)\Delta F|\to 0$ exponentially fast as $t\to+\infty$, we deduce that also $F(t)$ converges to $F$ exponentially fast (with respect to the $L^1$-distance and hence the Hausdorff distance) and thus \eqref{quasi} holds also with $F(t)$ replaced by $F$, possibly with a worse exponential rate.


\medskip

\noindent{\bf Case 2.} Assume that here exists $\bar t> 0$ such that $f_\infty(t) = 4 \pi N^{\frac13}$ for every $t\geq \bar t$. In this case we will  show that for large times $E(t)$ {\em coincides} with $F$. The argument is identical to that of \cite[Theorem~1.2-Case 2]{JuMoPoSpa} but reproduce it for the reader's convenience. 

By monotonicity of the functions $f_n$'s,  we have  that for every $T>\bar t$, $\{f_n\}$ converges uniformly to $f_\infty\equiv 4 \pi N^{\frac13}$ in $[\bar t, T]$.
In particular, using that
\[
\frac{1}{h_n} \mathcal{D}(E^{(h_n)}_{k+1}, E^{(h_n)}_{k}) \leq f_n(k h_n) - f_n((k+1) h_n),
\]
we deduce for every $t\in [\bar t , T-h_n]$
\[
\sum_{k=\lfloor\frac{t}{h_n}\rfloor}^{\lfloor\frac{T}{h_n}\rfloor-1}
h_n^{-1}\mathcal{D}(E^{(h_n)}_{k+1},E^{(h_n)}_{k})
\leq f_n\Big(\lfloor\frac{t}{h_n}\rfloor h_n\Big) - f_n\Big(\lfloor\frac{T}{h_n}\rfloor h_n\Big)=:b_n
\to 0 \qquad \textup{as } h_n\to 0.
\]
Arguing as in  \eqref{comesopra}, for every $\bar t  \leq t < s  \leq T-h_n$, we get
\begin{align*}
|E^{(h_n)}(t)\Delta E^{(h_n)}(s)|  \leq C l \frac{s-t}{h_n} P(E(0))+\frac{C}{l}\sum_{k=[\frac{t}{h_n}]  }^{\lfloor\frac{s}{h_n}\rfloor-1}\mathcal{D}(E^{(h_n)}_{k+1},E^{(h_n)}_{k}),
\end{align*} 
for all $l \leq \frac1C \sqrt{h_n}$. Choosing $\ell= \sqrt{b_n} h_n$, we conclude that
\begin{align*}
|E^{(h_n)}(t)\Delta E^{(h_n)}(s)|  \leq C \sqrt{b_n} (s-t) P(E(0))+C \sqrt{b_n} \to 0,
\end{align*}
that is $E(t) = E(s)$ for every $\bar t<t<s<T$. By the arbitrariness of $T>\bar t$ and recalling that by assumption $E(t)$ converges to a disjoint union of balls up to translations, we necessarily have $E(t)=F$ for all $t\geq \bar t$, with $F$ as in \eqref{Fform}.
\end{proof}

\section{The asymptotics of the 3D Mullins-Sekerka flow}\label{sec:4}

Let us first construct a flat flow solution for the Mullins-Sekerka flow \eqref{eq:mullins} in the three-dimensional flat torus $\T_R^3 = \R^3/(R\Z^3)$ with $R \geq 1$. The construction in the case of a bounded domain is due to Luckhaus and Sturzenhecker \cite{LS}, while the two dimensional flat torus $\T^2$ is considered in \cite{JuMoPoSpa}. The construction in $\T_R^3$  is  essentially  the same as in  the two-dimensional case but we repeat it for the reader's convenience.  First,  the perimeter of  $E $ in the flat torus $\T_R^3$  is defined as
\[
P_{\T_R^3}(E) := \sup \Big{\{} \int_E \div X \, dx : X \in C^1(\T_R^3,\R^3) , \, \| X\|_{L^\infty} \leq 1 \Big{\}}.
\]
Here $X \in C^1(\T_R^3,\R^2)$ means that the $\Z_R^3$-periodic extension of $X$ to $\R^3$ is continuously differentiable. For a given set of finite perimeter $E \subset \T_R^3$, with $|E|=v$, we consider the minimization problem 
\beq \label{def:min-prob2}
\min \Big{\{} P_{\T_R^3}(F) + \frac{h}{2} \int_{\T_R^3} |D \, U_{F,E}|^2 \, dx  :\quad \text{with }\, |F| = |E| = v \Big{\}},  
\eeq
where the function  $ U_{F,E}\in H^1(\T_R^3)$ is the solution of 
\beq \label{eq:potential1}
-\Delta U_{F,E} = \frac{1}{h} \left( \chi_F - \chi_E\right)
\eeq
with zero average. We note first that by a simple scaling argument we may reduce to the case $v = |B_1|$ by changing the values of $R$ and $h$.  As proven in \cite{LS, Rog} there exists a minimizer for \eqref{def:min-prob2}, but it might not be unique.  Concerning the regularity of the minimizers, we may  argue as in \cite{JuMoPoSpa} to deduce that  the minimizing set  $F$ is $C^{3,\alpha}$-regular and satisfies the associated Euler-Lagrange equation
\[
U_{F,E}  = -\mathrm{H}_{F} + \lambda \qquad \text{on }\, \pa F
\]
 in the classical sense, where  $\lambda$ is the Lagrange multiplier due to the volume constraint.

We denote 
 \beq
\label{def:distance2}
\mathfrak{D}(F,E) := \int_{\T_R^3} |D \, U_{F,E}|^2 \, dx ,
\eeq
where $U_{F,E}$ is defined in \eqref{eq:potential1}. We define the $H^{-1}$-norm of a function $f$ on the torus $\T_R^3$  by duality  as 
\[
\|f\|_{H^{-1}(\T_R^3)} :=  \sup \Big\{ \int_{\T_R^3} \varphi \, f \, dx : \|D \varphi\|_{L^2(\T_R^3)} \leq 1\Big\} .
\]
Then,  by   \eqref{eq:potential1} and  by  integration by parts we have
\beq
\label{def:Hmenouno-bound}
\|\chi_F - \chi_E\|_{H^{-1}(\T_R^3)}^2  \leq  h^2 \, \| D \, U_{F,E} \|_{L^2(\T_R^3)}^2 = h^2 \, \mathfrak{D}(F,E).
\eeq

We fix the time step $h >0$, the initial set $E(0) \subset \T_R^3$ and let $E_1^{(h)}$ be a minimizer of \eqref{def:min-prob2} with $E(0) =E$.   We construct the discrete-in-time evolution $(E_k^{(h)})_{k\in \N}$ as before by induction such that, assuming that $E_k^{(h)}$  is defined, we set $E_{k+1}^{(h)}$ to be a minimizer of \eqref{def:min-prob2} with $E= E_k^{(h)}$ and denote the associated potential for short  by $U_{k+1}^{(h)}$, which is the solution of  
\beq \label{eq:potential2}
-\Delta U_{k+1}^{(h)} = \frac{1}{h} \left( \chi_{E_{k+1}^{(h)}} - \chi_{E_{k}^{(h)}}\right)
\eeq 
with zero average. The Euler-Lagrange equation now reads as 
\beq \label{eq:Euler-Lagr2}
U_{k+1}^{(h)} = -\mathrm{H}_{E_{k+1}^{(h)}} + \lambda_{k+1}^{(h)} \qquad \text{on }\, \pa E_{k+1}^{(h)}.
\eeq
Using the minimality of $E_{k+1}^{(h)}$ and comparing its energy  against the previous set  $E_k^{(h)}$, we obtain 
\beq \label{eq:energy-compa2}
P_{\T_R^3}(E_{k+1}^{(h)})  +\frac{h}{2}  \mathfrak{D}(E_{k+1}^{(h)}, E_{k}^{(h)}) \leq P_{\T_R^3}(E_{k}^{(h)})  ,
\eeq
where $ \mathfrak{D}(E_{k+1}^{(h)}, E_{k}^{(h)})$ is defined in \eqref{def:distance2}.

As before we define the approximative flat flow  $\{E^{(h)}(t)\}_{t \geq 0}$ by setting
\[
E^{(h)}(t) = E_k^{(h)} \quad \text{and} \quad U^{(h)}(t) = U_k^{(h)} \quad \qquad  \text{for }\, t \in [kh, (k+1) h)
\]
and  we call a \emph{flat flow solution}  of \eqref{eq:mullins}  any cluster point  $\{E(t)\}_{t \geq 0}$ of $\{E^{(h)}(t)\}_{t \geq 0}$  as $h\to 0$,  i.e., 
\[
E^{(h_n)}(t) \to E(t) \quad \text{ in $L^1$ for almost every }\, t >0 \, \text{ and for some } \, h_n\to 0.
\]
Arguing exactly as in  \cite[Proposition 3.1]{Rog} we may conclude that there exists a flat flow  solution  of \eqref{eq:mullins}  starting from $E(0)$ such that $P_{\T_R^3}(E(t)) \leq P_{\T_R^3}(E(0))$, $|E(t)| = |E(0)|$ for every $t \geq 0$ and $\{E(t)\}_{t\geq 0}$ satisfies the equation \eqref{eq:mullins} in a weak sense. Moreover the estimate \eqref{eq:energy-compa2} implies the dissipation inequality, which states that for all $T_2>T_1$ it holds 
\beq \label{eq:dissipation2}
\frac12\int_{T_1+h}^{T_2+h} \|D\,  U^{(h)}(t)\|_{L^2(\T_R^3)}^2 \, dt  \leq P_{\T_R^3}(E_{T_1}^{(h)}) -  P_{\T_R^3}(E_{T_2}^{(h)}).
\eeq
Also the limit  $U^{(h)}(t) \to U(t)$, up to a sub-sequence, exists for almost every $t$.


We state the following continuity result for the flat flow. The proof can be traced from \cite{LS}, but we sketch it for the reader's convenience.  
\begin{lemma}
\label{lem:luckhaus}
Let $\{E^{(h)}(t)\}_{t \geq 0}$ be an approximative flat flow solution of \eqref{eq:mullins}  starting from a  set of finite perimeter $E(0) \subset \T_R^3$ with volume $|E(0)| = v$ and perimeter $P_{ \T_R^3}(E(0))\leq M$. Then for all  $ t>s\geq 2h$ with $h \leq t-s\leq1$ it holds
\[
|E^{(h)}(t) \Delta E^{(h)}(s)| \leq C (t-s)^{\frac14}, 
\]
where the constant depends on $v$ and $M$. 
\end{lemma}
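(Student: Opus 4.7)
I will combine a Luckhaus--Sturzenhecker-type interpolation inequality with a telescopic identity for $\chi_{E^{(h)}(t)}-\chi_{E^{(h)}(s)}$ coming from the Euler--Lagrange equation \eqref{eq:potential2}. Once these two ingredients are in place, a single application of Cauchy--Schwarz against the dissipation estimate \eqref{eq:dissipation2} yields the bound (in fact with the stronger exponent $1/3$).

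\textbf{Step 1 (Interpolation).} The main tool I will prove is: for every pair $E,F\subset\T_R^3$ of sets of finite perimeter with $|E|=|F|$, setting $g=\chi_E-\chi_F$,
\[
|E\Delta F|\,\leq\,C\bigl(P_{\T_R^3}(E)+P_{\T_R^3}(F)\bigr)^{2/3}\,\|g\|_{H^{-1}(\T_R^3)}^{2/3}.
\]
Mollifying with a standard smooth mollifier $\rho_\eps$ and writing $g_\eps=\rho_\eps* g$, I split
\[
\|g\|_{L^2}^2\,=\,\int g(g-g_\eps)\,dx+\int g\,g_\eps\,dx.
\]
The first term is controlled using $|g|\leq 1$ and the classical translation estimate for BV functions, which give $\|g-g_\eps\|_{L^2}^2\leq 2\|g-g_\eps\|_{L^1}\leq C\eps(P(E)+P(F))$. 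The second is bounded by $\|g\|_{H^{-1}}\|g_\eps\|_{H^1}$, and since $\|g_\eps\|_{L^2}\leq\|g\|_{L^2}$ and $\|Dg_\eps\|_{L^2}\leq\|D\rho_\eps\|_{L^1}\|g\|_{L^2}\leq C\eps^{-1}\|g\|_{L^2}$ by Young's inequality, one gets $\|g\|_{L^2}\leq C\sqrt{\eps A}+C\eps^{-1}B$ with $A=P(E)+P(F)$ and $B=\|g\|_{H^{-1}}$. Optimizing in $\eps$ (the minimizer is $\eps\simeq (B^2/A)^{1/3}$) produces the claim.

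\textbf{Step 2 (Telescoping and dissipation).} Let $k_1=\lfloor s/h\rfloor$ and $k_2=\lfloor t/h\rfloor$. Summing the identities \eqref{eq:potential2} over $k=k_1,\ldots,k_2-1$ gives
\[
\chi_{E^{(h)}(t)}-\chi_{E^{(h)}(s)}\,=\,-\,\Delta\Phi,\qquad \Phi:=\sum_{k=k_1}^{k_2-1}h\,U_{k+1}^{(h)},
\]
and $\Phi$ has zero mean, so by definition of the $H^{-1}$ norm and Minkowski's inequality,
\[
\|\chi_{E^{(h)}(t)}-\chi_{E^{(h)}(s)}\|_{H^{-1}(\T_R^3)}\,=\,\|D\Phi\|_{L^2(\T_R^3)}\,\leq\, h\sum_{k=k_1}^{k_2-1}\mathfrak{D}(E_{k+1}^{(h)},E_{k}^{(h)})^{1/2}.
\]
Iterating \eqref{eq:energy-compa2} between $k_1$ and $k_2$ yields $h\sum_{k=k_1}^{k_2-1}\mathfrak{D}(E_{k+1}^{(h)},E_{k}^{(h)})\leq 2\bigl(P_{\T_R^3}(E^{(h)}(s))-P_{\T_R^3}(E^{(h)}(t))\bigr)\leq 2M$, while the number of summands is at most $(t-s)/h+1\leq 2(t-s)/h$ (using $h\leq t-s$). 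Cauchy--Schwarz then gives
\[
\|\chi_{E^{(h)}(t)}-\chi_{E^{(h)}(s)}\|_{H^{-1}(\T_R^3)}\,\leq\,\sqrt{2(t-s)/h\cdot h\cdot 2M}\,=\,2\sqrt{M(t-s)}.
\]

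\textbf{Step 3 (Conclusion) and main obstacle.} Plugging this bound and the uniform perimeter bound $P_{\T_R^3}(E^{(h)}(\tau))\leq M$ (monotonicity from \eqref{eq:energy-compa2}) into Step 1,
\[
|E^{(h)}(t)\Delta E^{(h)}(s)|\,\leq\, C(2M)^{2/3}\bigl(2\sqrt{M(t-s)}\bigr)^{2/3}\,\leq\, C_M\,(t-s)^{1/3},
\]
which is even stronger than the stated $(t-s)^{1/4}$. The only nontrivial part of the argument is the interpolation inequality of Step 1: the correct exponents only emerge after optimizing the mollification scale, and the estimate must be run on the torus with zero-mean $H^{-1}$ rather than in the more standard $\R^n$ setting. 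Everything else is a formal manipulation of the incremental Euler--Lagrange equation together with the a priori energy dissipation.
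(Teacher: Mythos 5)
Your proof is correct, and it takes a genuinely different route from the paper's at the key step, yielding a slightly stronger conclusion. The paper invokes the Luckhaus--Sturzenhecker interpolation inequality \eqref{eq:interpolationLS} as a black box, namely $\|\varphi\|_{L^1} \leq \rho(\|D\varphi\|_{L^1}+c) + C\rho^{-1}\|\varphi\|_{H^{-1}}$, and then simply plugs in $\rho=\rho_0(t-s)^{1/4}$ to obtain the exponent $1/4$. You instead derive, from scratch via mollification, a scale-invariant interpolation specific to bounded functions, $|E\Delta F|\leq C\bigl(P(E)+P(F)\bigr)^{2/3}\|\chi_E-\chi_F\|_{H^{-1}}^{2/3}$: the extra gain over the LS lemma comes from exploiting $|g|\leq 1$ to write $\int g(g-g_\eps)\leq\|g-g_\eps\|_{L^1}$, which is linear rather than quadratic in $g$ and hence produces the better exponent $2/3$ after optimizing the mollification scale. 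Combined with essentially the same $H^{-1}$--dissipation computation (you telescope and apply Cauchy--Schwarz on a finite sum; the paper does the same in integral form), this gives $(t-s)^{1/3}$ instead of $(t-s)^{1/4}$, which is even stronger than what the lemma asserts and of course suffices for everything downstream.

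One technical point worth spelling out: your optimal mollification scale $\eps\simeq(B^2/A)^{1/3}$ must lie in the admissible range for mollification on $\T_R^3$, say $\eps\leq 1/2$ (recall $R\geq 1$). If the optimal $\eps$ exceeds that range, then $\|g\|_{H^{-1}}^2\gtrsim A\geq P(E^{(h)}(t))\geq c_v$ by the isoperimetric inequality (since $|E^{(h)}(t)|=v$ is fixed), and your Step 2 bound forces $t-s\gtrsim c_v/M$; in that regime the claim is trivial because $|E^{(h)}(t)\Delta E^{(h)}(s)|\leq 2v\leq C_{v,M}(t-s)^{1/4}$. So the argument closes, but you should record this case distinction. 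Note also that the LS lemma carries the constant $c$ precisely to handle such domain effects in a bounded setting, which is why the paper's version is not scale-invariant; your derivation sidesteps this by working directly with the mollifier.
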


\begin{proof}
 Let us fix $s,t$  as in the assumptions  and let $U^{(h)}(s)$ and $U^{(h)}(t)$ be the potentials associated with the sets $E^{(h)}(s)$ and $E^{(h)}(t)$.  We use \eqref{def:Hmenouno-bound} and  \eqref{eq:dissipation2}  for the approximative flat  flow to deduce  
\[
\begin{split}
\|\chi_{E^{(h)}(t)} -\chi_{E^{(h)}(s)}\|_{H^{-1}(\T_R^3)} &\leq \int_{s-h}^{t+h} \frac{1}{h}\|\chi_{E^{(h)}(\tau +h)} -  \chi_{E^{(h)}(\tau)}\|_{H^{-1}(\T_R^3)}\, d \tau\\
&\leq 2 \left( \int_{s-h}^{t+h}\frac{1}{h^2}\|\chi_{E^{(h)}(\tau +h)} -\chi_{ E^{(h)}(\tau)}\|_{H^{-1}(\T_R^3)}^2 \, d \tau\right)^{\frac12} \, \sqrt{t-s}\\
&\leq 2\left(\int_{h}^\infty \|D \,  U^{(h)}(t)\|_{L^2(\T_R^3)}^2 \, d \tau\right)^{\frac12} \, \sqrt{t-s} \\
&\leq 2\sqrt{2 P_{\T_R^3}(E(0))} \sqrt{t-s}. 
\end{split}
\]
By \cite[Lemma 3.1]{LS} the following interpolation inequality holds for $\varphi \in BV(\T_R^3)$ and all $\rho \in (0,\rho_0)$
\beq
\label{eq:interpolationLS}
\|\varphi\|_{L^1(\T_R^3)} \leq \rho (\|D \varphi\|_{L^1(\T_R^3)} +c) + \frac{C}{\rho}\|\varphi\|_{H^{-1}(\T_R^3)}.
\eeq
We use \eqref{eq:interpolationLS} with $\varphi = \chi_{E^{(h)}(t) } - \chi_{E^{(h)}(s) }$ and have by the above and by $P_{\T_R^3}(E^{(h)}(\bar t)) \leq   P_{\T_R^3}(E(0)) $ for all $\bar t >0$
\[
\begin{split}
|E^{(h)}(t) \Delta E^{(h)}(s)| &\leq \rho( P_{\T_R^3}(E(0))  + c) + \frac{C}{\rho} \|\chi_{E^{(h)}(t)} - \chi_{E^{(h)}(s)}\|_{H^{-1}(\T_R^3)}\\
&\leq  \rho( P_{\T_R^3}(E(0))  + c) + \frac{C}{\rho} \sqrt{2 P_{\T_R^3}(E(0))} \sqrt{t-s}.
\end{split}
\]
The claim follows by choosing $\rho = \rho_0(t-s)^{\frac14}$.
\end{proof}

To proceed, we need the following density estimate which is similar to \cite[ Proposition 4.1]{JuMoPoSpa}  and \cite[Lemma 2.1]{Sch}. 
\begin{lemma}
\label{lem:schatzle}
 Let $E\subset \T_R^3$, with $R\geq 1$, be a set of class $C^3$, with $|E| = v$ and  $P_{\T_R^3}(E) \leq M $,  and let $u_E \in C^1(\T_R^3)$  be a function with zero average such that  $\|D u_E\|_{L^2(\T_R^3)} \leq M$ and 
\beq \label{eq:boundary}
\mathrm{H}_E = -u_E +\lambda \quad \text{on } \,  \pa E \quad \text{ for some } \lambda\in\R .
\eeq
Then for all $\rho \in (0,1)$ it holds 
\beq \label{eq:density-est}
\frac{1}{C} \leq \frac{\H^2(\pa E \cap B_\rho(x))}{\rho^2} \leq C,
\eeq
where the constant $C>1$ depends only on $v$ and $M$.
\end{lemma}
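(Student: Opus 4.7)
I follow the strategy of Sch\"atzle~\cite{Sch}, combining the monotonicity formula for $C^2$-surfaces with the Michael--Simon Sobolev inequality \eqref{MSinequality} applied to cut-offs of characteristic functions of balls. The Euler--Lagrange equation $\mathrm{H}_E = -u_E + \lambda$ is used to convert the absence of a priori curvature bounds into integrability estimates coming from the ambient Sobolev control of $u_E$. The lower density bound is meaningful only at points $x\in\partial E$, which is henceforth assumed.

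\emph{Bound on $\lambda$.} Localize to a ball $B\subset\T_R^3$ on which the position vector $z(y)=y-x_0$ is single-valued; the tangential divergence identity $\mathrm{div}^T z = 2$ on $\partial E$ and the first-variation formula give
$$
2\,\mathcal{H}^2(\partial E \cap B) \,=\, \int_{\partial E \cap B}\!\!\mathrm{H}_E\,z\cdot\nu_E\,d\mathcal{H}^2 \,+\, \int_{\partial E \cap \partial B}\!\! z\cdot\tau\,d\mathcal{H}^1.
$$
Summing over a finite Lipschitz partition of $\T_R^3$, using the isoperimetric lower bound $P(E)\ge c(v)>0$ (valid since $R\ge 1$ and $v$ is fixed), substituting $\mathrm{H}_E = -u_E + \lambda$, and estimating $\int_{\partial E} u_E\,z\cdot\nu_E\,d\mathcal{H}^2$ via the divergence theorem in the bulk together with the Poincar\'e--Sobolev bound $\|u_E\|_{L^6(\T_R^3)}\le C\|Du_E\|_{L^2}\le CM$, one extracts $|\lambda|\le C(v,M)$.

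\emph{Michael--Simon ODI, trace estimate, and conclusion.} Fix $x\in\partial E$ and set $\phi(\rho):=\mathcal{H}^2(\partial E \cap B_\rho(x))$. Applying \eqref{MSinequality} to a Lipschitz approximation of $\chi_{B_\rho(x)}|_{\partial E}$ and passing to the limit yields, for a.e.\ $\rho\in(0,1)$,
$$
\phi(\rho)^{1/2} \,\le\, C\,\phi'(\rho) \,+\, C\!\int_{\partial E \cap B_\rho(x)} |\mathrm{H}_E|\,d\mathcal{H}^2.
$$
By $|\mathrm{H}_E|\le|u_E|+|\lambda|$, H\"older's inequality, and the previous step, the last integral is dominated by $C\,\phi(\rho) + \phi(\rho)^{1/2}\,\|u_E\|_{L^2(\partial E \cap B_\rho(x))}$. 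An iterated application of \eqref{MSinequality} to powers $|u_E|^\alpha$ with $\alpha>1$, coupled with the ambient bound $\|u_E\|_{L^6(\T_R^3)}\le C(v,M)$, produces a trace estimate $\|u_E\|_{L^2(\partial E\cap B_\rho(x))}\le C\,\phi(\rho)^{1/2-\epsilon}$ for some $\epsilon>0$. Inserting it yields $\phi(\rho)^{1/2}\le C\phi'(\rho) + C\phi(\rho)^{1-\epsilon}$; integrating on $(\rho,1)$ and using $\phi(1)\le P(E)\le M$ gives the upper bound $\phi(\rho)\le C\rho^2$. The lower bound then follows from the classical monotonicity formula applied to the $C^3$-surface $\partial E$: since the $2$-density at each $x\in\partial E$ equals $1$ and the mean-curvature error has just been uniformly controlled, the monotonicity inequality propagates this limit value down to $\phi(\rho)\ge c\rho^2$ for every $\rho\in(0,1)$.

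\emph{Main obstacle.} The hard part is the trace estimate $\|u_E\|_{L^2(\partial E \cap B_\rho(x))} \lesssim \phi(\rho)^{1/2-\epsilon}$ with constants depending only on $v$ and $M$ (and not on the $C^3$-modulus of $\partial E$). This corresponds precisely to the critical integrability threshold $p=n-1=2$ for density estimates of varifolds with $L^p$ mean curvature in dimension $n=3$: extracting the small gain $\epsilon>0$ requires the delicate iteration of the Michael--Simon inequality combined with the ambient Sobolev embedding, as carried out in \cite{Sch}.
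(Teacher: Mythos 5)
Your proposal has the right top-level structure (bound $\lambda$ uniformly, then invoke Sch\"atzle's density estimate for hypersurfaces whose mean curvature is prescribed by an ambient Sobolev function), and you correctly identify where the hard analysis lives. But there is a genuine gap in the part that is actually new in this lemma, namely the $R$-independent bound on $\lambda$. The paper does not re-derive the density estimates at all: once $|\lambda|\leq C(v,M)$ is established, it cites \cite[Lemma~2.1]{Sch} as a black box. The entire content of the proof is therefore the Lagrange multiplier bound, and your argument for it does not work as written. Testing the first variation with the position vector $z$ on a ball $B$ and ``summing over a finite Lipschitz partition of $\T_R^3$'' is problematic: each ball carries its own origin $x_0$, so the boundary terms $\int_{\partial E\cap\partial B}z\cdot\tau$ do not telescope, and the coefficient of $\lambda$ that you would like to isolate, $\int_{\partial E\cap B}z\cdot\nu_E\,d\mathcal{H}^2$, has no definite sign and cannot be bounded away from zero. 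The paper circumvents this with a different choice of test field $X=D\zeta$ where $\Delta\zeta=f_1-f_2$ with bump functions $f_1,f_2$ located at two points $x_1,x_2$ where, respectively, $|E\cap B_r(x_1)|\geq\tfrac12|B_r|$ and $|E\cap B_r(x_2)|\leq\delta|B_r|$ (such points exist by \cite[Proposition~2.3]{JN}). Then $\int_{\partial E}X\cdot\nu_E\,d\mathcal{H}^2=\int_E(f_1-f_2)\,dx\geq c_0>0$ is bounded from below independently of $R$, and the other two terms $\int_{\partial E}\mathrm{H}_E\,(X\cdot\nu_E)$ and $\int_{\partial E}u_E\,(X\cdot\nu_E)$ are controlled using local Schauder estimates for $D^2\zeta$, the homogeneous $L^6$--$L^2$ Poincar\'e inequality on $\T_R^3$, and the hypotheses $P_{\T_R^3}(E)\leq M$, $\|Du_E\|_{L^2}\leq M$. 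Your later sketch of an ODI for $\phi(\rho)=\mathcal{H}^2(\partial E\cap B_\rho(x))$ via the Michael--Simon inequality and a trace estimate is a reasonable caricature of \cite[Lemma~2.1]{Sch}, but you leave the crucial trace estimate ``$\|u_E\|_{L^2(\partial E\cap B_\rho(x))}\lesssim\phi(\rho)^{1/2-\epsilon}$'' completely unjustified, and you yourself flag it as the main obstacle; since the paper simply cites Sch\"atzle here, you should do the same rather than attempt an incomplete re-derivation. In short: your approach does not supply the one ingredient the paper actually proves, and supplies a sketch of the one ingredient the paper does not.
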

\begin{proof}
We note that by \eqref{eq:boundary}   for every $X \in C^1(\T_R^3; \R^3)$  it holds
\beq \label{eq:1stVar}
\int_{\pa E} \div^T X \, d \H^2 = \int_{E} \div \big((-u_E + \lambda)X\big) \, dx. 
\eeq
Therefore the statement follows from \cite[Lemma 2.1]{Sch} once we bound the Lagrange multiplier $\lambda \in \R$. The challenge is that we need to bound $\lambda$ such that the bound is independent of $R$. 

To this aim we obtain by  \cite[Proposition 2.3]{JN},  or \cite[Lemma 2.1]{MoPoSpa}, that there is a radius $r \in (0,1)$  depending on $v,M$, and points $x_1, x_2 \in \T_R^3$ such that $|x_1 -x_2| > 4r$ and 
\begin{equation}
\label{eq:pointsx}
|E\cap B_r(x_1)| \geq \frac12 |B_r| \qquad \text{and} \qquad |E\cap B_r(x_2)| \leq \delta|B_r|, 
\end{equation}
where $\delta>0$ is a small number which choice will be clear later.  Let $\eta_\eps$ be the standard convolution kernel and choose $f_1 = \chi_{B_{3r/2}(x_1)} * \eta_\eps$ and $f_2 = 27 \chi_{B_{\frac{r}{2}}(x_2)} * \eta_\eps$ for $\eps = \frac{r}{4}$, and let $\zeta \in C^\infty(\T_R^3)$ be the solution of  
\begin{equation}
\label{eq:zeta}
\Delta \zeta = f_1 - f_2
\end{equation}
in $\T_R^3$ with zero average. 

 Let us show that 
\begin{equation}
\label{eq:boundzeta}
\|D \zeta\|_{L^2(\T_R^3)} \leq C,
\end{equation}
for a constant $C$, which is independent of $R$. To this aim we multiply \eqref{eq:zeta} by $\zeta$, integrate by parts and recall the homogeneous Poincar\'e inequality  in three-dimension
\[
\|u\|_{L^6(\T_R^3)} \leq C \|D  u\|_{L^2(\T_R^3)},
\]
which holds for all $u$ with zero average for a uniform constant $C$, and deduce
\[
\begin{split}
\|D \zeta\|_{L^2(\T_R^3)}^2 &= - \int_{\T_R^3} \Delta \zeta \, \zeta \, dx = -\int_{\T_R^3}  (f_1 - f_2) \, \zeta \, dx\\
&\leq \|f_1-f_2\|_{L^{\frac{6}{5}}(\T_R^3)} \|\zeta \|_{L^{6}(\T_R^3)} \leq C \|D \zeta\|_{L^2(\T_R^3)},
\end{split}
\] 
where the constant depends on $v$ and $M$. Hence we have \eqref{eq:boundzeta}.

Let us fix $x \in \T_R^3$. By standard estimates for the Poisson equation it holds  for $\alpha \in (0,1)$
\[
\| D^2 \zeta \|_{C^{\alpha}(B_1(x))} \leq C( \|D \zeta \|_{L^2(B_2(x))} +  \|f_1 - f_2\|_{C^{\alpha}(B_2(x))}), 
\]
where the constant depends only on $\alpha$.  Since $x$ is arbitrary, the bound  \eqref{eq:boundzeta}  implies 
 \begin{equation}
\label{eq:boundzeta2}
\|D^2 \zeta\|_{L^\infty(\T_R^3)} \leq C.
\end{equation}

We proceed by multiplying the equation \eqref{eq:boundary} by $D \zeta \cdot \nu_E$, where $\nu_E$ is the outer unit normal of $E$, use the divergence theorem and have 
\[
\big|\int_{\pa E} (-u_E + \lambda)(D \zeta \cdot \nu_E)  \, d \H^{2} \big| =\big| \int_{\pa E} \mathrm{H}_E (D\zeta \cdot \nu_E) \, d \H^{2} \big|=\big| \int_{\pa E} \text{div}^T D \zeta \, d \H^{2}\big| \leq C \|D^2 \zeta\|_{L^\infty}  P_{\T_R^3}(E) . 
\] 
On the other hand it holds 
\[
|\lambda|  \big|\int_{\pa E} (D \zeta \cdot \nu_E)  \, d \H^{2}\big|  = |\lambda| \big|\int_{E} \Delta \zeta \, dx\big| = |\lambda| \big|\int_{E} f_1 - f_2\, dx\big|.
\]
  It follows from the choice of the points $x_1$ and $x_2$ in  \eqref{eq:pointsx} that $ \int_{E} f_1 - f_2\, dx \geq c_0>0$, for $c_0$ depending on $v$ and $M$, when $\delta$ is small enough. Therefore we need yet to show 
 \begin{equation}
\label{eq:bound3}
\big| \int_{\pa E} u_E(D \zeta \cdot \nu_E)  \, d \H^{2}\big| \leq C
\end{equation}
in order to have the uniform bound $|\lambda| \leq C$. 

We use the divergence theorem (for $E$ and $\T_R^3 \setminus E$ ) and have 
\[
\big| \int_{\pa E} u_E(D \zeta \cdot \nu_E)  \, d \H^{2}\big| \leq  \frac12 \int_{\T_R^3} |u_E| |\Delta \zeta| + |D u_E| |D \zeta| \, dx.  
\]
By the assumption $\|D u_E\|_{L^2(\T_R^3)} \leq M$ and by \eqref{eq:boundzeta} we have 
\[
\int_{\T_R^3} |D u_E| |D \zeta| \, dx \leq \|D u_E\|_{L^2(\T_R^3)} \|D \zeta\|_{L^2(\T_R^3)} \leq C.
\] 
On the other hand, by the equation \eqref{eq:zeta} and by the homogeneous Poincar\'e inequality it holds
\[
 \int_{\T_R^3} |u_E| |\Delta \zeta|  \, dx  \leq \|u_E\|_{L^6(\T_R^3)} \|f_1 -f_2\|_{L^{\frac65}(\T_R^3)} \leq  C \|D u_E\|_{L^2(\T_R^3)}  \leq C.
\]
Hence we have \eqref{eq:bound3} and  the claim follows. 
\end{proof}

Since we have the density estimates we may use the trace theorem due to  Meyers-Ziemer \cite{MZ}. Indeed it follows from \cite[Theorem 4.7]{MZ} that if  $E$ is as in the statement of Lemma \ref{lem:schatzle},  and hence satisfies \eqref{eq:density-est}, then for every $\varphi \in C^1(\T_R^3)$  it holds 
\begin{equation}\label{eq:meyers-ziemer}
\big| \int_{\pa E} \varphi \, d \H^2 \big| \leq C' \|\varphi\|_{W^{1,1}(\T_R^3)}\,,
\end{equation}
with $C'$ depending on $M$ and $R$.
The following proposition then follows from Theorem \ref{thm:JN} and Lemma \ref{prop:5-1}.
\begin{proposition}
\label{prop:quantialex-ms}
Let $E \subset \T_R^3$ be $C^3$-regular set such that $|E|=|B_1|$,  $P_{\T_R^3}(E) \leq M$ and let $u_E \in C^1(\T_R^3)$ be a function with zero average which satisfies
\[
\mathrm{H}_E = -u_E +\lambda \quad \text{on } \,  \pa E \quad \text{ for some } \lambda\in \R.
\]
Then there exist $q \in (0,1)$ and $R_0= R_0(M)\geq 1$   such that when $R \geq R_0$, there are disjoint balls $B_{r}(x_1), \dots, B_{r}(x_N)$ with $r = N^{-\frac13}$ such that for $F = \bigcup_{i=1}^N B_r(x_i)$  it holds 
\[
|E \Delta F| + |P(E) - 4 \pi N^{\frac13}| \leq C \|D u_E\|_{L^2( \T_R^3)}^q,
\]
where $C>1$ depends on $M$ and $R$. 

If in addition the balls $B_{r}(x_1), \dots, B_{r}(x_N)$  are quantitatively disjoint in the sense that $|x_i - x_j|\geq 2r +\delta_1$ with $i \neq j$ for some $\delta_1 >0$, then there is $\eps= \eps(\delta_1, M)>0$ such that if $|E \Delta F|\leq \eps$, where $F = \bigcup_{i=1}^N B_r(x_i)$, then   the estimate improves as 
\[
P(E) - 4 \pi N^{\frac13}  \leq C \|D u_E\|_{L^2( \T_R^3)}^2,
\]
for a constant $C$ that depends on $M, R$ and $\delta_1>0$. 
\end{proposition}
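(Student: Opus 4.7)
The plan is to convert the hypothesis $\|Du_E\|_{L^2(\T_R^3)}$ into an $L^2$-oscillation bound for the mean curvature on $\partial E$, so that the two conclusions follow respectively from Theorem~\ref{thm:JN} combined with \eqref{ffeq1} (for the first inequality with exponent $q$) and from Proposition~\ref{prop:5-1} (for the second, quadratic one). The key reduction I would establish is
\[
\|\mathrm{H}_E-\bar{\mathrm{H}}_E\|_{L^2(\partial E)}^2 \le C(M,R)\,\|Du_E\|_{L^2(\T_R^3)}^2.
\]

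To obtain this, I would first use the Euler--Lagrange identity $\mathrm{H}_E=-u_E+\lambda$ and the fact that $\bar{\mathrm{H}}_E$ is by definition the $L^2(\partial E)$-optimal constant approximation of $\mathrm{H}_E$, so that
\[
\|\mathrm{H}_E-\bar{\mathrm{H}}_E\|_{L^2(\partial E)}^2 \le \|\mathrm{H}_E-\lambda\|_{L^2(\partial E)}^2 = \int_{\partial E} u_E^2\,d\mathcal{H}^2.
\]
Then, since the density estimates of Lemma~\ref{lem:schatzle} hold, I would apply the Meyers--Ziemer trace inequality \eqref{eq:meyers-ziemer} to the composition $u_E^2 \in W^{1,1}(\T_R^3)$, giving
\[
\int_{\partial E} u_E^2\,d\mathcal{H}^2 \le C'\big(\|u_E\|_{L^2(\T_R^3)}^2 + 2\|u_E\|_{L^2(\T_R^3)}\|Du_E\|_{L^2(\T_R^3)}\big),
\]
and closing the estimate by the Poincar\'e inequality $\|u_E\|_{L^2(\T_R^3)} \le C(R)\|Du_E\|_{L^2(\T_R^3)}$, which is available because $u_E$ has zero average on $\T_R^3$. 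With the reduction in hand, Theorem~\ref{thm:JN} applied to $E$ (viewed as a subset of $\R^3$, which is legitimate for $R \ge R_0(M)$ since the perimeter and density bounds confine $E$ in a Euclidean ball inside a fundamental domain) together with \eqref{ffeq1} yields disjoint equisize balls $F=\bigcup_{i=1}^N B_r(x_i)$ with $r=N^{-1/3}$ and the Hausdorff-type estimate; converting this into an $L^1$-symmetric-difference bound via $|E\Delta F|\le C\,\sup_{E\Delta F}|d_F|$ (coarea and the perimeter bound on $F$) and absorbing the non-small regime of $\|Du_E\|_{L^2}$ into the constant gives the first inequality. Under the further hypothesis of quantitatively separated balls and $|E\Delta F|\le\varepsilon$, Proposition~\ref{prop:5-1} applied to the same $F$ upgrades the exponent from $q$ to $2$ and produces the second inequality.

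I expect the main obstacle to be the trace step: the Meyers--Ziemer inequality only provides $L^1$-control of the trace, so to recover the needed $L^2$-control on $\partial E$ one must apply it to the nonlinear composition $u_E^2$ and reabsorb the resulting $\|Du_E\|_{L^2}$-factor via Poincar\'e. The validity of both the Meyers--Ziemer constant and the Poincar\'e inequality is precisely guaranteed by the density estimates of Lemma~\ref{lem:schatzle} and the zero-average condition built into the statement of the proposition, which confirms that these hypotheses are exactly what the argument needs.
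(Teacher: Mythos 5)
Your proposal follows essentially the same path as the paper: reduce $\|Du_E\|_{L^2}$ to a small regime, bound $\|\mathrm{H}_E-\bar{\mathrm{H}}_E\|_{L^2(\partial E)}^2$ by the Meyers--Ziemer trace inequality applied to $u_E^2$ plus Poincar\'e, then invoke Theorem~\ref{thm:JN} with \eqref{ffeq1} for the exponent-$q$ estimate and Proposition~\ref{prop:5-1} for the quadratic upgrade. The one point where the paper is more explicit than you are is the justification that $E$ can be treated in $\R^3$: it first derives a uniform diameter bound on each component of $\partial E$ from the density estimate \eqref{eq:density-est} and only then fixes $R_0$ accordingly, rather than asserting outright that $E$ fits inside a fundamental domain.
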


\begin{proof}
In order to prove the first inequality we may assume that $\|D u_E\|_{L^2( \T_R^3)} \leq \eps_1$, where $\eps_1>0$ is a small number which choice will be clear later, since otherwise the claim is trivially true when we choose $F =B_1$  and the constant $C$  large enough.  Let us begin by proving that every component of the boundary $\pa E$ are quantitatively bounded. To be more precise if $\Sigma'$ is a component of $\pa E$ then we claim that it holds 
\begin{equation}
\label{eq:sec-5-prop1}
\text{diam}(\Sigma') \leq C
\end{equation}
for a constant that is independent of $R$. Indeed, this follows from the density estimate \eqref{eq:density-est} by the following well-known argument. We can choose points $x_1, \dots, x_{k_0} \in \Sigma'$ such that $\frac12\leq |x_{i} - x_j|$ for $i \neq j$ and $k_0 \geq \text{diam}(\Sigma') $. Then by the perimeter bound and  \eqref{eq:density-est} it holds 
\[
M \geq \H^{2}(\Sigma') \geq  \sum_{i=1}^{k_0} \H^{2}(\Sigma' \cap B_{\frac14}(x_i)) \geq \frac{k_0}{16C} \geq \frac{\text{diam}(\Sigma')}{16C}
\]
  and \eqref{eq:sec-5-prop1} follows. Thanks to the diameter bound \eqref{eq:sec-5-prop1} we may use the results from the previous sections when we choose $R_0$ large enough, i.e., $ R_0 \geq 2C \geq 2 \text{diam}(\Sigma')$. 

  We use \eqref{eq:meyers-ziemer}  for $\varphi = u_E^2$ and obtain
\[
\|u_E\|_{L^2(\pa E)}^2 \leq \|u_E^2 \|_{W^{1,1}(\T_R^3)} \leq C' \|D u_E\|_{L^2(\T_R^3)}^2,  
\]
where the last inequality follows from Poincar\'e inequality. The assumption $\mathrm{H}_E = -u_E +\lambda $ on $\pa E$  yields
\begin{equation}
\label{eq:sec-5-prop2}
 \|\mathrm{H}_E -  \mathrm{\bar H}_E\|_{L^2(\pa E)}^2 \leq  \|\mathrm{H}_E - \lambda \|_{L^2(\pa E)}^2  =  \|u_E\|_{L^2(\pa E)}^2 \leq C' \|D u_E\|_{L^2(\T_R^3)}^2.  
\end{equation}
Since $\|D u_E\|_{L^2(\T_R^3)} \leq \eps_1$, then it holds $ \|\mathrm{H}_E - \mathrm{\bar H}_E\|_{L^2(\pa E)} \leq \tau$, where $\tau>0$ is from Theorem \ref{thm:JN}, when $\eps_1$ is small enough.  Then Theorem \ref{thm:JN} with \eqref{ffeq1} implies 
\begin{equation}
\label{eq:sec-5-prop3}
\sup_{x \in E \Delta F} |d_F|  + |P_{\T_R^3}(E) - 4 \pi N^{\frac13}| \leq C \|\mathrm{H}_E - \mathrm{\bar H}_E\|_{L^2(\pa E)}^q \leq C' \|D u_E\|_{L^2(\T_R^3)}^q ,
\end{equation}
where $F $ is as in the statement.  Hence we have the first claim. 

In order to prove the second claim we again notice that we may assume $\|D u_E\|_{L^2( \T_R^3)} \leq \eps_1$.  The  claim then follows from \eqref{eq:sec-5-prop1}, \eqref{eq:sec-5-prop2} and from Proposition \ref{prop:5-1}. 
\end{proof}

We are now ready the prove Theorem \ref{thm3:mullins}. Since the argument for the first statement  is similar to \cite[Theorem 1.1]{JN} (see also \cite{FJM}), while the second stamen follows from a similar argument  as in the proof of Theorem \ref{thm2:MCF}, we only give the outline of the proof. 
\begin{proof}[\textbf{Proof of Theorem \ref{thm3:mullins}}]
Let $\{E^{(h_n)}(t)\}_{t\geq 0}$ be an approximative flat flow converging to $\{E(t)\}_{t\geq 0}$. As we discussed above, we may assume that $|E(0)| = |B_1|$.

Let us consider the first claim. We begin by using the dissipation inequality \eqref{eq:dissipation2} for $T_l = l^2- h$, where $l = 1,2, \dots$ and obtain by the mean value theorem a sequence of times $T_l^h \in [l^2,(l+1)^2]$ such that 
\[
\|D\,  U^{(h)}(T_l^h)\|_{L^2(\T_R^3)}^2 \leq \frac{1}{l}\int_{l^2}^{(l+1)^2} \|D\, U^{(h)}(t)\|_{L^2(\T_R^3)}^2 \, dt  \leq \frac{P_{\T_R^3}(E_0)}{l}.
\]
Denote the associated sets by $E_l^h = E^{(h)}(T_l^h)$. Then by Proposition \ref{prop:quantialex-ms} we find an index $l_0$, which is independent of $h$, such that for every $l \geq l_0$  there is $N_l^h \in \N$ such that for the union of disjoint balls with radius $r_{l,h}= (N_l^h)^{-\frac13}$,  $F_l^h = \bigcup_{i=1}^{N_l^h} B_{r_{l,h}}(x_i^l)$, it holds 
\begin{equation}
\label{eq:thm1.4-1}
|E_l^h \Delta F_l^h| + |P(E_l^h) - 4 \pi (N_l^h)^{\frac13}| \leq C\, l^{-\frac{q}{2}}.
\end{equation}
Since the perimeter is decreasing $P(E^{(h)}(T_{l+1}^h)) \leq P(E^{(h)}(T_{l}^h)) $, also the number of the balls $N_l^h $ is decreasing  for $l\geq l_0$. Therefore, by  passing to the converging  sub-sequence $(h_n)$, we obtain that the limit sequence $\{N_l\}_{l \geq l_0}$, where  $N_l^{h_n}  \to N_l$, is also monotone and thus it has a limit $N = \lim_{l \to \infty} N_l$ and there is $l_1 \geq l_0$ such that $N_l = N$ for all $l \geq l_1$. But since $N_l^{h_n}  \to N_l$ and $N_l^{h_n}$ are also monotone,  we deduce that for every $L > l_1$ it holds 
\begin{equation}
\label{eq:thm1.4-2}
N_l^{h_n} = N  \qquad \text{for all }\,  l_1 \leq l \leq L 
\end{equation}
when $h_n $ is small, depending on $L$.  

We obtain from \eqref{eq:thm1.4-1} and \eqref{eq:thm1.4-2} that  
\[
 |P(E_l^{h_n}) - 4 \pi N^{\frac13}| \leq C\, l^{-\frac{q}{2}} \qquad \text{for all }\, l_1 \leq l \leq L 
\]
when $h_n$ is small. But again since $t \mapsto P(E^{(h_n)}(t))$ is monotone,  we deduce that for every $\eps>0$ there is $T_\eps>1$, independent of $h_n$, such that  for all $t \in   (T_\eps-1, T)$ where $T > 2T_\eps$ is arbitrary it holds
\begin{equation}
\label{eq:thm1.4-22}
 |P(E^{(h_n)}(t)) - 4 \pi N^{\frac13}| \leq  \eps , 
\end{equation}
when $h_n\leq h_0(\eps,T)$.  Using this together with  the dissipation inequality \eqref{eq:dissipation2} we deduce
\[
\frac12\int_{T_\eps}^{T} \|D \, U^{(h_n)}(t)\|_{L^2(\T_R^3)}^2 \, dt  \leq 2 \eps. 
\]

Let us denote $I_\eps^{h_n} = \{ t \in (T_\eps, T): \|D\,  U^{(h_n)}(t)\|_{L^2(\T_R^3)}^2 \geq \sqrt{\eps}\}$. Then by the above it holds $|I_\eps^{h_n}| \leq 4\sqrt{\eps}$. Moreover by Proposition \ref{prop:quantialex-ms}  we obtain that for all $t \in  (T_\eps, T) \setminus I_\eps^{h_n}$ there is  a union of $N$-many disjoint balls with radius $r = N^{-\frac13}$, $F^{(h_n)}(t) = \bigcup_{i=1}^{N} B_{r}(x_i^{h_n}(t))$  such that 
\[
|E^{(h_n)}(t) \Delta F^{(h_n)}(t) |  \leq C \eps^{\frac{q}{4}}.
\]
We use the H\"older continuity   $|E^{(h_n)}(t) \Delta E^{(h_n)}(s)| \leq C |t-s|^{\frac14}$, for $s+h \leq t \leq 1$ from   Lemma \ref{lem:luckhaus} and the fact that the above holds for all $t  \in (T_\eps, T) \setminus I_\eps^{h_n}$ with $|I_\eps^{h_n}| \leq 4\sqrt{\eps}$  to deduce that  for all $t \in (T_\eps,T)$ it holds 
\begin{equation}
\label{eq:thm1.4-3}
|E^{(h_n)}(t) \Delta F^{(h_n)}(t) |  \leq C \eps^{\frac{q}{8}},
\end{equation}
where $F^{(h_n)}(t) = \bigcup_{i=1}^{N} B_{r}(x_i^{h_n}(t))$ for $r = N^{-\frac13}$. 
The convergence $|E(t) \Delta F(t) |\to 0 $ as $t \to \infty$ follows by passing \eqref{eq:thm1.4-3} to the limit $h_n \to 0$ and recalling that $T$ is arbitrary.

\smallskip
We proceed to prove the second claim. We assume still that  $\{E^{h_n}(t)\}_{t\geq 0}$ is an approximate flat flow converging to $\{E(t)\}_{t\geq 0}$ and let $N$, $r>0$, and $x_i(t)$, $i=1, \dots, N$, be as in the statement. As in the proof of Theorem \ref{thm2:MCF} we set
\[
f_n(t) = P(E^{(h_n)}(t))
\]
and observe that  the $f_n$'s are bounded  monotone non-increasing functions. Therefore  the functions $f_n$'s converge pointwise to some non-increasing function $f_\infty:[0,+\infty)\to \R$. It follows from \eqref{eq:thm1.4-22} that $\lim_{t \to \infty} f_\infty(t) = 4 \pi N^{\frac13}$. Again  we  distinguish two cases.

\noindent{\bf Case 1.} Assume first  $ f_\infty(t)> 4\pi N^{\frac13}$ for all $t\in [0, +\infty)$. 

We recall that it follows  from \eqref{eq:thm1.4-3} that for any $\eps>0$ there is $T_\eps>1$ such that for all $t \in (T_\eps, T)$ it holds 
\[
|E^{(h_n)}(t) \Delta F^{(h_n)}(t) |  \leq C \eps^{\frac{q}{8}},
\] 
 when  $h_n$ is small.  Here $F^{(h_n)}(t) = \bigcup_{i=1}^{N} B_{ r}(x_i^{h_n}(t))$ for $r =  N^{-\frac13}$ and $|x_i^{h_n}(t) - x_j^{h_n}(t)|> 2 r$. 
We  use that fact that $|E^{(h_n)}(\cdot)\Delta E(\cdot)| \to 0$ uniformly in $[T_\eps, T]$ to obtain 
\[
|E(t) \Delta \tilde  F(t) |  \leq C \eps^{\frac{q}{8}} \qquad \text{for all }\, t \in (T_\eps, T), 
\]
where $\tilde F(t) = \bigcup_{i=1}^{ N} B_{ r}(\tilde x_i(t))$. The assumption  $|E(t) \Delta  F(t) | \to 0$ as $t \to \infty$ for $ F(t) = \bigcup_{i=1}^{ N} B_{r}(x_i(t))$ with $|x_i(t) -x_j(t)|\geq 2 r +\delta_1$ implies  that, by possibly enlarging  $T_{\eps}$, we have 
\begin{equation}
\label{eq:thm1.5-0}
|E^{(h_n)}(t) \Delta  F(t) |  \leq C \eps^{\frac{q}{8}} \qquad \text{for all }\, t\in (T_\eps, T) 
\end{equation}
when $h_n$ is small. We may thus use the second inequality in Proposition \ref{prop:quantialex-ms} to deduce 
\[
P(E^{(h_n)}(t) ) - 4 \pi N^{\frac13} \leq C \|D\,  U^{(h_n)}(t)\|_{L^2(\T_R^3)}^2
\]
for all $t \in (T_\eps, T)$. We then use the assumption  $ f_\infty(t)> 4\pi N^{\frac13} $ for all for all $t\in [0, +\infty)$ to deduce that it holds 
\begin{equation}
\label{eq:thm1.5-1}
0\leq P(E^{(h_n)}(t) ) - 4 \pi N^{\frac13} \leq C \|D\,  U^{(h_n)}(t)\|_{L^2(\T_R^3)}^2
\end{equation}
for all $t \in (T_\eps, T)$, when $h_n$ is small.

We proceed by choosing $k \geq  \lfloor \frac{T_\eps}{h_n}\rfloor +1$ and $k_T = \lfloor \frac{T}{h_n}\rfloor-1$. We  use \eqref{eq:energy-compa2} iteratively and then  \eqref{eq:thm1.5-1}    to obtain
\[
\begin{split}
\frac{h_n}{2} \sum_{i =  k}^{k_T-1}  \mathfrak{D}(E_{i+1}^{(h_n)}, E_{i}^{(h_n)}) &\leq P_{\T_R^3}(E_{k}^{(h_n)})  -P_{\T_R^3}(E_{k_T}^{(h_n)})  \leq  P_{\T_R^3}(E_{k}^{(h_n)}) - 4 \pi N^{\frac13} \\
&\leq    C \|D\,  U^{(h_n)}_{k}\|_{L^2(\T_R^3)}^2 = C  \mathfrak{D}(E_{k}^{(h_n)}, E_{k-1}^{(h_n)}),
\end{split}
\]
where the equality follows from the definition \eqref{def:distance2} of the dissipation  $\mathfrak{D}(\cdot , \cdot)$. We now argue precisely as in the proof of \cite[Theorem 1.3]{JuMoPoSpa} (which in turn is similar to the argument in the proof of Theorem \ref{thm2:MCF})  and deduce that for every $t \in (T_\eps+h_n, T-h_n)$ it holds 
\[
h_n \sum_{i = \lfloor \frac{t}{h_n} \rfloor}^{\lfloor \frac{T}{h_n} \rfloor}  \mathfrak{D}(E_{i+1}^{(h_n)}, E_{i}^{(h_n)}) \leq C e^{- \frac{t}{C'}}
\]
for a constants $C, C'$, depending on $M, \delta_1$ and $R$. Then,  by \eqref{def:Hmenouno-bound} and by the above inequality we have that for $T_\eps+h_n <s <t< s+1 < T-h_n$ it holds
\[
\begin{split}
\|\chi_{E^{(h_n)}(t)} &- \chi_{E^{(h_n)}(s)}\|_{H^{-1}(\T_R^3)} \leq \sum_{i=\lfloor\frac{s}{h_n}\rfloor+1}^{\lfloor\frac{t}{h_n}\rfloor} \|\chi_{E^{(h_n)}_{i}} - \chi_{E^{(h_n)}_{i-1}}\|_{H^{-1}(\T_R^3)}  \\ 
&\leq \frac{\sqrt{t-s}}{\sqrt{h_n}} \Big( \sum_{i=\lfloor\frac{s}{h_n}\rfloor+1}^{\lfloor\frac{t}{h_n}\rfloor} \|\chi_{E^{(h_n)}_{i}} - \chi_{E^{(h_n)}_{i-1}}\|_{H^{-1}(\T_R^3)}^2\Big)^{\frac12} \\
&\leq  \frac{1}{\sqrt{h_n}} \Big( \sum_{i=\lfloor\frac{s}{h_n}\rfloor+1}^{\lfloor\frac{t}{h_n}\rfloor} h_n^2 \, \mathfrak{D}(E^{(h_n)}_{i},E^{(h_n)}_{i-1})\Big)^{\frac12}\\
&\leq C\, e^{-\frac{s}{2C'}},
\end{split}
\]
when $h_n$ is small. Using \eqref{eq:interpolationLS} with $\varphi = \chi_{E^{(h_n)}(t)} - \chi_{E^{(h_n)}(s)}$ and  $\rho = e^{-\frac{s}{4C'}}$, we obtain by the above that 
\[
|E^{(h_n)}(t) \Delta E^{(h_n)}(s)|  \leq C\, e^{-\frac{s}{4C'}},
\]
by possible increasing $C$. Passing $h_n \to 0$ yields  $|E(t) \Delta E(s)|  \leq C\, e^{-\frac{s}{4C_0}}$ for $T_\eps < s <t < s+1<T$. Since $T$ was arbitrary we deduce that $E(t)$ converges exponentially fast to a set $F$ which by   \eqref{eq:thm1.5-0} is of the form $ F= \bigcup_{i=1}^{ N} B_{r}(x_i)$ with $|x_i -x_j| \geq 2 r +\delta_1$, i.e., 
\[
|E(t) \Delta F|  \leq C\, e^{-\frac{t}{4C'}},
\]
for $C$, which depends on $E(0)$ and $\delta_1$.  The convergence of the perimeters
\[
|P(E(t)) -  4 \pi N^{\frac13} | \leq C e^{-\frac{t}{C}}
\] 
follows from the same argument as in the proof of Theorem \ref{thm2:MCF}. 

\noindent{\bf Case 2.} There exists $\bar t>0$ such $ f_\infty(t) = 4\pi N^{\frac13}$ for all $t\in [\bar t, +\infty)$. 
In this case we argue exactly as in Theorem \ref{thm2:MCF} to infer that $E(t) = F$ for every $t\geq \bar t$.
\end{proof}

\section*{Acknowledgments}
V.~J.~was supported by the Academy of Finland grant 314227.  F.~O. and E.~S.~have been supported by the ERC-STG grant 759229 {\sc HiCoS}. M.~M. 
has been supported by PRIN 2022 Project “Geometric Evolution Problems and Shape Optimization (GEPSO)”, PNRR Italia Domani, financed by European Union via the Program NextGenerationEU, CUP D53D23005820006.  F.O. and M.~M.  are  members of the Gruppo Nazionale per l’Analisi
Matematica, la Probabilit\`a e le loro Applicazioni (GNAMPA), which is part of the Istituto
Nazionale di Alta Matematica (INdAM).

\end{document}